\newtheorem*{rep@theorem}{\rep@title}
\newcommand{\newreptheorem}[2]{%
\newenvironment{rep#1}[1]{%
 \def\rep@title{#2 \ref{##1}}%
 \begin{rep@theorem}}%
 {\end{rep@theorem}}}
\newcommand{\N}{\mathbb{N}}
\newcommand{\Q}{\mathbb{Q}}
\newcommand{\R}{\mathbb{R}}
\newcommand{\sfd}{{\sf d}}
\renewcommand{\d}{{\mathrm d}}
\newcommand{\restr}[1]{\lower3pt\hbox{$|_{#1}$}}
\newcommand{\limi}{\varliminf}
\newcommand{\lims}{\varlimsup}
\newcommand{\X}{{\rm X}}
\newcommand{\Y}{{\rm Y}}
\newcommand{\lip}{{\rm lip}}
\newcommand{\Lip}{{\rm Lip}}
\newcommand{\loc}{{\rm loc}}
\newcommand{\mm}{\mathfrak m}
\newcommand{\F}{{\rm F}}
\newcommand{\width}{{\rm width}}
\renewcommand{\SS}{{\rm SS}}
\newcommand{\mytag}[2]{%
  \text{#1}%
  \@bsphack
  \begingroup
    \@onelevel@sanitize\@currentlabelname
    \edef\@currentlabelname{%
      \expandafter\strip@period\@currentlabelname\relax.\relax\@@@%
    }%
    \protected@write\@auxout{}{%
      \string\newlabel{#2}{%
        {\color{black}#1}%
        {\thepage}%
        {\@currentlabelname}%
        {\@currentHref}{}%
      }%
    }%
  \endgroup
  \@esphack
}
\def\Xint#1{\mathchoice
{\XXint\displaystyle\textstyle{#1}}%
{\XXint\textstyle\scriptstyle{#1}}%
{\XXint\scriptstyle\scriptscriptstyle{#1}}%
{\XXint\scriptscriptstyle\scriptscriptstyle{#1}}%
\!\int}
\def\XXint#1#2#3{{\setbox0=\hbox{$#1{#2#3}{\int}$ }
\vcenter{\hbox{$#2#3$ }}\kern-.6\wd0}}
\def\dashint{\Xint-}
\newtheorem{theorem}{Theorem}[section]
\newtheorem{corollary}[theorem]{Corollary}
\newtheorem{lemma}[theorem]{Lemma}
\newtheorem{proposition}[theorem]{Proposition}
\theoremstyle{definition}
\newtheorem{example}[theorem]{Example}
\newcounter{Counter}
\newtheorem{remark}[theorem]{Remark}
\newcommand{\Mod}{\textup{Mod}}
\newcommand{\cModepsilon}[2]{{\mathscr C}\text{-}{\rm Mod}_{#2}^{#1}}
\newcommand{\sfc}{{\sf c}}
\newcommand{\sfC}{{\sf C}}
\newcommand{\Chain}[1]{\mathscr{C}^{#1}}
\renewcommand{\sl}[1]{{\rm sl}_{#1}}
\newcommand{\UG}[1]{{\rm UG}^{#1}}
\newcommand{\LUG}[1]{{\rm LUG}^{#1}}
\newcommand{\WUG}[2]{{\rm WUG}_{#1}^{#2}}
\newcommand{\calL}[1]{\mathcal{L}^{#1}}
\newcommand{\HH}[2]{H^{1,#1}_{{\rm #2}}}
\renewcommand{\F}[1]{{\rm F}_{{\rm #1}}}
\newcommand{\relF}[1]{\tilde{\rm F}_{{\rm #1}}}
\title[Sobolev spaces via chains in metric measure spaces]{Sobolev spaces via chains in metric measure spaces}
\author[Emanuele Caputo]{Emanuele Caputo}\address[Emanuele Caputo]{Mathematics Institute, Zeeman Building, University of Warwick, Coventry, CV4 7AL, United Kingdom}\email{emanuele.e.caputo@jyu.fi}
\author[Nicola Cavallucci]{Nicola Cavallucci}\address[Nicola Cavallucci]{
Institute of Mathematics, EPFL, Station 8, 1015 Lausanne, Switzerland}\email{n.cavallucci23@gmail.com}
\keywords{Sobolev spaces, metric measure spaces, Poincar\'{e} inequality, chain upper gradients}
\subjclass[2020]{46E36, 30L99, 49J52}
\begin{document}
\begin{abstract}
    We define the chain Sobolev space on a possibly non-complete metric measure space in terms of chain upper gradients. In this context, $\varepsilon$-chains are a finite collection of points with distance at most $\varepsilon$ between consecutive points. They play the role of discrete versions of curves. Chain upper gradients are defined accordingly and the chain Sobolev space is defined by letting the size parameter $\varepsilon$ going to zero. In the complete setting, we prove that the chain Sobolev space is equal to the classical notions of Sobolev spaces in terms of relaxation of upper gradients or of the local Lipschitz constant of Lipschitz functions. 
    The proof of this fact is inspired by a recent technique developed by Eriksson-Bique in \cite{Eriksson-Bique-Density-in-energy}.
    In the possible non-complete setting, we prove that the chain Sobolev space is equal to the one defined via relaxation of the local Lipschitz constant of Lipschitz functions, while in general they are different from the one defined via upper gradients along curves. 
    We apply the theory developed in the paper to prove equivalent formulations of the Poincar\'{e} inequality in terms of pointwise estimates involving $\varepsilon$-upper gradients, lower bounds on modulus of chains connecting points and 
    size of separating sets measured with the Minkowski content in the non-complete setting.
    Along the way, we discuss the notion of weak $\varepsilon$-upper gradients and asymmetric notions of integral along chains.
\end{abstract}

\maketitle

\tableofcontents

\section{Introduction}

A fundamental research direction in analysis on metric spaces is the development of calculus with Sobolev functions and Lipschitz functions defined on metric measure spaces $(\X,\sfd,\mm)$.
After the work of Shanmuganlingam \cite{Shanmu00}, the Sobolev seminorm of a Borel function $u\colon \X \to \R$ is the infimum of the $L^p(\X)$-norms of all upper gradients of $u$, where a function $g\colon \X \to [0,+\infty]$ is an upper gradient of $u$ if the following weak version of fundamental theorem of calculus holds for every rectifiable curve $\gamma\colon [0,1] \to \X$:
\begin{equation*}
    |u(\gamma_1)-u(\gamma_0)| \le \int_0^1 g(\gamma_t)|\dot{\gamma}_t|\,\d t.
\end{equation*}
We refer the reader to the classical textbook \cite{HKST15}.

Another classical approach in the subject concerning the case $p > 1$ is based on a relaxation procedure of appropriate functionals with respect to the $L^p(\X)$-topology, playing the role of the Dirichlet energy. In Section \ref{subsec:relaxation} we will recall these relaxation procedures in detail.
On one side, Cheeger in \cite{Cheeger99} considered the relaxation of the $L^p(\X)$-norm of upper gradients. Here we denote the corresponding Banach space by $\HH{p}{curve}(\X)$. On the other hand, Ambrosio, Gigli and Savar\'{e} in \cite{AmbrosioGigliSavare11-3, AmbrosioGigliSavare14} studied the relaxation of the $L^p(\X)$-norm of the local Lipschitz constant $\lip\,u$ of Lipschitz functions $u$. The corresponding Banach space is denoted by $\HH{p}{AGS}(\X)$. For details on these spaces we refer to Section \ref{sec:Cheeger-AGS}.

Other approaches are available, like the ones defined via integrations along test plans \cite{AmbrosioGigliSavare11-3, AmbrosioGigliSavare14}, but this approach will not be used in this work. We refer the reader to the recent survey \cite{AmbIkoLucPas2024}. 

The main results of \cite{AmbrosioGigliSavare11-3, AmbrosioDiMarino14, Eriksson-Bique-Density-in-energy, LucicPasqualetto2024} show that the spaces $\HH{p}{curve}(\X)$ and $\HH{p}{AGS}(\X)$ coincide for $p\ge 1$, if the metric space $(\X,\sfd)$ is complete. However, for non complete metric spaces they can be different, see Example \ref{ex:AGS_different_curve}.

This work comes from the following question: is it possible to show that $\HH{p}{AGS}(\X)$ is equal to a space obtained via relaxation in terms of a suitable notion of upper gradients when the metric space $(\X,\sfd)$ is not assumed to be complete?

The answer is affirmative and leads to an alternative definition of Sobolev or $BV$ spaces, expressed in terms of chains instead of curves. We recall that a $\varepsilon$-chain, for $\varepsilon > 0$, is a finite collection of points $\sfc = \{q_i\}_{i=0}^N$ such that $\sfd(q_i,q_{i+1})\le \varepsilon$ for every $i=0,\ldots,N-1$.

The integration along rectifiable paths into Shanmugalingam's definition is replaced by integration along $\varepsilon$-chains. With this analogy in mind, a function $g$ is a $\varepsilon$-upper gradient of $u$ provided that
\begin{equation*}
    |u(q_N)-u(q_0)| \le \sum_{i=0}^{N-1}\frac{g(q_i) + g(q_{i+1})}{2}\sfd(q_i,q_{i+1}) =:\int_{\sfc} g
\end{equation*}
for every $\varepsilon$-chain $\sfc = \{q_i\}_{i=0}^N$. This can be seen as a discrete analogue of the integral along a rectifiable path, see Proposition \ref{prop:UGeta_subset_UG} for a precise statement. The set of all $\varepsilon$-upper gradients of $u$ is denoted by $\UG{\varepsilon}(u)$. 
The corresponding functional is
\begin{equation}
        \F{\Chain{}} \colon L^{p}(\X) \to  [0,+\infty], \quad u \mapsto \lim_{\varepsilon \to 0} \inf\left\{ \| g \|_{L^p(\X)}\,:\, g\in \UG{\varepsilon}(u)\right\}.
\end{equation}
The Banach space obtained by relaxation of $\F{\Chain{}}$ is denoted by $\HH{p}{\Chain{}}(\X)$.

The first result shows the equivalence of the spaces introduced so far, if the metric space is complete.

\begin{theorem}
\label{theo-intro:equivalences-of-all-norms-complete-spaces}
    Let $(\X,\sfd,\mm)$ be a metric measure space such that $(\X,\sfd)$ is complete. Then
    \begin{equation}
        \HH{p}{\Chain{}}(\X) = \HH{p}{AGS}(\X) = \HH{p}{curve}(\X)
    \end{equation}
    and
    \begin{equation}
        \|u \|_{\HH{p}{\Chain{}}(\X)} = \|u \|_{\HH{p}{AGS}(\X)} = \|u \|_{\HH{p}{curve}(\X)}
    \end{equation}
    for every $u\in L^p(\X)$.
\end{theorem}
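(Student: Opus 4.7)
The plan is to take for granted the equality $\HH{p}{AGS}(\X) = \HH{p}{curve}(\X)$ with equal norms in the complete setting, established in the cited works \cite{AmbrosioGigliSavare11-3, AmbrosioDiMarino14, Eriksson-Bique-Density-in-energy, LucicPasqualetto2024}. The remaining task is the two-sided norm equality between $\HH{p}{\Chain{}}(\X)$ and, say, $\HH{p}{AGS}(\X)$, which splits into the two inequalities handled below.

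For the inequality $\|u\|_{\HH{p}{\Chain{}}(\X)} \le \|u\|_{\HH{p}{AGS}(\X)}$, since both norms are defined by $L^p$-relaxation, it suffices to compare the underlying functionals on the dense class of (boundedly supported) Lipschitz functions. Given a Lipschitz $u$ and $\varepsilon>0$, I would set
\[
g_\varepsilon(x) := \sup\Bigl\{ \tfrac{|u(x)-u(y)|}{\sfd(x,y)} \,:\, 0 < \sfd(x,y) \le \varepsilon \Bigr\},
\]
which bounds $|u(x)-u(y)|/\sfd(x,y)$ whenever $\sfd(x,y)\le\varepsilon$. In particular $|u(q_{i+1})-u(q_i)| \le \tfrac{g_\varepsilon(q_i)+g_\varepsilon(q_{i+1})}{2}\sfd(q_i,q_{i+1})$, and summing over a chain $\sfc=\{q_i\}$ yields $g_\varepsilon \in \UG{\varepsilon}(u)$. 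Since $g_\varepsilon \le \Lip(u)$ and $g_\varepsilon \downarrow \lip(u)$ pointwise as $\varepsilon \downarrow 0$, a dominated convergence argument (after a standard cutoff to fit the $L^p(\X)$ framework) gives $\F{\Chain{}}(u) \le \|\lip(u)\|_{L^p(\X)}$, and relaxation extends this estimate to every $u \in L^p(\X)$.

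For the reverse inequality $\|u\|_{\HH{p}{curve}(\X)} \le \|u\|_{\HH{p}{\Chain{}}(\X)}$, I would take a recovery sequence $u_n \to u$ in $L^p$ together with $g_n \in \UG{\varepsilon_n}(u_n)$ satisfying $\varepsilon_n \downarrow 0$ and $\|g_n\|_{L^p(\X)} \to \|u\|_{\HH{p}{\Chain{}}(\X)}$. By $L^p$-weak compactness (and Mazur's lemma to upgrade to strong limits of convex combinations) one extracts a candidate $g$ with $\|g\|_{L^p(\X)} \le \liminf_n \|g_n\|_{L^p(\X)}$. The heart of the argument, and the main obstacle, is then to promote $g$ to a $p$-weak upper gradient of $u$ along rectifiable curves. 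The strategy is to apply Proposition \ref{prop:UGeta_subset_UG}: given a rectifiable curve $\gamma$, sample it at scale $\varepsilon_n$ to produce an $\varepsilon_n$-chain $\sfc_n$ along $\gamma$ and show that the discrete integrals $\int_{\sfc_n} g_n$ converge to $\int_\gamma g\,|\dot{\gamma}_t|\,\d t$ for $\Mod_p$-almost every $\gamma$. This discrete-to-continuous passage is the technical core inspired by \cite{Eriksson-Bique-Density-in-energy} and relies on a Fuglede-type argument; the completeness of $(\X,\sfd)$ is exactly what ensures that chain sampling faithfully approximates curves at the $L^p$ level, allowing the quantitative comparison to close.
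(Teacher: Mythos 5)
Your reduction to the known equality $\HH{p}{AGS}(\X)=\HH{p}{curve}(\X)$ and your first inequality $\relF{\Chain{}}\le\relF{AGS}$ (via $\sl{\varepsilon}u\in\UG{\varepsilon}(u)$, monotone convergence of $\sl{\varepsilon}u$ to $\lip u$, and a cutoff) are correct and agree with the paper's easy direction (Lemma \ref{lemma:slopeeps_epsug}). The problem is the reverse inequality $\relF{curve}\le\relF{\Chain{}}$, where your plan has a genuine gap.

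Given a recovery sequence $u_n\to u$ in $L^p(\X)$ with $g_n\in\UG{\varepsilon_n}(u_n)$, the passage to a curve inequality for a limit $g$ breaks down at both ends. On the left-hand side, the chain upper gradient inequality controls $|u_n(\omega(\sfc))-u_n(\alpha(\sfc))|$ at the \emph{endpoints} of chains, and $L^p$-convergence of $u_n$ gives no pointwise control at the endpoints of a prescribed curve $\gamma$ — not even for $\Mod_p$-a.e.\ curve, since an $\mm$-null set can have positive $p$-capacity and hence carry a curve family of positive modulus (this is exactly the discrepancy between $\cModepsilon{\varepsilon}{p}$ and $\Mod_p$ discussed after Lemma \ref{lemma:measure_zero_implies_modulezero}). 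On the right-hand side, $\int_{\sfc_n}g_n$ evaluates $g_n$ at finitely many sampled points, and $g_n\to g$ in $L^p(\X)$ says nothing about values at those points; the chain Fuglede lemma (Proposition \ref{prop:Fuglede's-lemma}) only yields convergence for $\cModepsilon{\varepsilon}{p}$-a.e.\ chain, a modulus that vanishes on all chains through a null set and therefore cannot see the specific chains sampled along a given $\gamma$. A further obstruction is structural: an $\varepsilon$-upper gradient taking the value $+\infty$ on a null set satisfies the curve inequality only for curves with endpoints in $\{g<\infty\}$ (Proposition \ref{prop:UGeta_subset_UG} and the remark following it), so one cannot simply feed the $g_n$ into $\F{curve}(u_n)$. (For $p=1$, the appeal to weak $L^p$-compactness also fails.) The paper avoids all of this by arguing in the opposite direction: from a chain upper gradient $g$ it builds Lipschitz approximants $u_j(x)=\inf\{u(\alpha(\sfc))+\int_\sfc g_j\}$ with \emph{Lipschitz} (hence finite-valued) $\tfrac1j$-upper gradients, which are genuine curve upper gradients; completeness enters not through "faithful sampling" but through the compactness of chains converging to curves (Proposition \ref{prop:Sylvester_compactness_chains}), used in a contradiction argument to prove $u_j\to u$ in $L^p(\X)$. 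The auxiliary space $\HH{p}{\Chain{},\,\Lip}(\X)$ and Theorems \ref{theo:density_energy_complete} and \ref{theo:density-energy-chains-complete} exist precisely to implement this; your sketch does not supply a substitute for that mechanism.
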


The second result, which answers the aforementioned question, establishes the equality between $\HH{p}{AGS}(\X)$ and $\HH{p}{\Chain{}}(\X)$ also for non complete metric spaces. It is obtained from Theorem \ref{theo-intro:equivalences-of-all-norms-complete-spaces} and the fact that $\HH{p}{AGS}(\X)=\HH{p}{AGS}(\bar{\X})$ and $\HH{p}{\Chain{}}(\X)=\HH{p}{\Chain{}}(\bar{\X})$, where $\bar{\X}$ is the metric completion of $\X$, see Proposition \ref{prop:AGS_completion} and Theorem \ref{theo:natural_isometries_chains}.

\begin{theorem}
\label{theo-intro:equivalences-of-AGS-Chain}
    Let $(\X,\sfd,\mm)$ be a metric measure space, not necessarily complete. Then
    \begin{equation}
        \HH{p}{\Chain{}}(\X) = \HH{p}{AGS}(\X)
    \end{equation}
    and
    \begin{equation}
        \|u \|_{\HH{p}{\Chain{}}(\X)} = \|u \|_{\HH{p}{AGS}(\X)}
    \end{equation}
    for every $u\in L^p(\X)$.
\end{theorem}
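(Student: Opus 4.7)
The proof follows the strategy hinted at by the authors: reduce the non-complete case to the complete one by passing to the metric completion. Let $(\bar{\X},\sfd)$ denote the metric completion of $(\X,\sfd)$, endowed with the push-forward measure $\bar{\mm}$ under the canonical isometric inclusion $\iota\colon \X \hookrightarrow \bar{\X}$. Since $\iota(\X)$ has full $\bar{\mm}$-measure, $L^p(\X)$ and $L^p(\bar{\X})$ are canonically identified as Banach spaces, so comparing Sobolev norms on the two sides is meaningful.

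The plan is then a three-step identification, in which the first two are done via the cited results and the third is Theorem \ref{theo-intro:equivalences-of-all-norms-complete-spaces}. First, Proposition \ref{prop:AGS_completion} gives the isometric equality $\HH{p}{AGS}(\X) = \HH{p}{AGS}(\bar{\X})$, reflecting the fact that Lipschitz functions on $\X$ extend uniquely to $\bar{\X}$ and, by density, keep the same local Lipschitz constant $\mm$-a.e. Second, Theorem \ref{theo:natural_isometries_chains} provides the analogous isometric equality $\HH{p}{\Chain{}}(\X) = \HH{p}{\Chain{}}(\bar{\X})$: an $\varepsilon$-chain in $\bar{\X}$ can be approximated by an $\varepsilon'$-chain in $\X$ for any $\varepsilon' > \varepsilon$, and the perturbation disappears in the limit $\varepsilon \to 0$ that defines $\F{\Chain{}}$. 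Third, since $(\bar{\X},\sfd)$ is complete, Theorem \ref{theo-intro:equivalences-of-all-norms-complete-spaces} applied to $(\bar{\X},\sfd,\bar{\mm})$ yields $\HH{p}{\Chain{}}(\bar{\X}) = \HH{p}{AGS}(\bar{\X})$ with equal norms. Chaining these three isometries gives the claim.

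The genuinely delicate point is the second step, Theorem \ref{theo:natural_isometries_chains}. The AGS side is well behaved under completion because Lipschitz extension is automatic, but the stability of the chain energy is more subtle: the discrete trapezoidal integral $\int_{\sfc}g = \sum \tfrac{g(q_i)+g(q_{i+1})}{2}\sfd(q_i,q_{i+1})$ depends on the pointwise values of $g$ at the vertices of the chain, so one must verify that every $\varepsilon$-upper gradient on $\X$ extends to an $\varepsilon'$-upper gradient on $\bar{\X}$ (and conversely restricts back) without essentially enlarging its $L^p$-norm. The asymmetric variants of chain integrals and the weak $\varepsilon$-upper gradients announced in the abstract appear to be introduced precisely to provide the flexibility needed for this density-based extension argument, since one can then afford to replace the rigid trapezoidal rule by a one-sided evaluation at vertices chosen to lie in $\X$.
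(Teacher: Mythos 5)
Your proof is correct and follows exactly the paper's argument: pass to the completion, combine Proposition \ref{prop:AGS_completion} and Theorem \ref{theo:natural_isometries_chains} to transfer both spaces isometrically to $\bar{\X}$, and then apply Theorem \ref{theo-intro:equivalences-of-all-norms-complete-spaces} there. (Your speculation about how Theorem \ref{theo:natural_isometries_chains} itself is proved is only partially accurate --- the non-Lipschitz chain space is handled via weak $\varepsilon$-upper gradients and the vanishing of the chain modulus of $\Chain{}(\bar{\X}\setminus\X)$, not by chain approximation --- but since you invoke that theorem as a black box this does not affect the present proof.)
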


Other possible notions of integral along chains are considered in the paper, leading to the definition of the same space.

\subsection{The proof of the main results}

The proof of Theorem \ref{theo-intro:equivalences-of-all-norms-complete-spaces}
is inspired by the approximation method developed by Eriksson-Bique in \cite{Eriksson-Bique-Density-in-energy}. It is not apriori clear the relation between $\HH{p}{\Chain{}}(\X)$ and $\HH{p}{curve}(\X)$. Therefore we introduce an auxiliary space, that we denote as $\HH{p}{\Chain{},\, \Lip}(\X)$ and it is defined as the domain of finiteness of the lower semicontinuous envelop of the following energy
\begin{equation}
        \F{\Chain{},\, \Lip} \colon L^{p}(\X) \to  [0,+\infty], \quad u \mapsto \begin{cases}
            \underset{\varepsilon\to 0}{\lim}\inf\left\{ \| g \|_{L^p(\X)}\,:\, g\in \UG{\varepsilon}(u) \cap {\rm Lip}(\X) \right\} &\text{if } u\in \Lip(\X),\\
            +\infty &\text{otherwise}.
        \end{cases}
\end{equation}

The space is normed with the sum of the $L^p$-norm and the relaxation of the energy functional above.
The functions for which $\F{\Chain{},\,\Lip}$ is finite play the role of regular functions, being Lipschitz and with Lipschitz upper gradients. They form a regular class of functions for which one could hope to get density in energy in $\HH{p}{curve}(\X)$, in full generality.

On one side, one easily gets 
\begin{equation}
\label{eq:trivial_inclusion_of_spaces}
    \HH{p}{\Chain{},\Lip}(\X)\subseteq\HH{p}{curve}(\X) \qquad\text{and}\qquad \HH{p}{\Chain{},\Lip}(\X)\subseteq\HH{p}{\Chain{}}(\X)
\end{equation}
and that the inclusions are $1$-Lipschitz.

The proof of Theorem \ref{theo-intro:equivalences-of-all-norms-complete-spaces} is achieved into two steps, respectively proving that the reverse inclusions in \eqref{eq:trivial_inclusion_of_spaces} hold and are $1$-Lipschitz.

To do both, we follow the mentioned approximation scheme in \cite{Eriksson-Bique-Density-in-energy}, that we briefly recall, in a simplified form, for reader's convenience in the case of the proof of the inclusion $\HH{p}{curve}(\X) \subseteq \HH{p}{\Chain{},\Lip}(\X)$. For every given $u\in L^p(\X)$, proceed using the following steps.
\begin{itemize}
    \item[(Step 1)] Reduce the proof to the case where $u$ is bounded, with bounded support and nonnegative.
    \item[(Step 2)] For every upper gradient along curves $g$, define Lipschitz functions $g_j$ that converge to $g$ pointwise and in $L^p(\X)$.
    \item[(Step 3)] Define the functions 
    $$u_j(x) := \inf\left\{ u(q_0) + \int_\sfc g_j \,:\, \sfc = \{q_i\}_{i=0}^N \text{ is a }\frac{1}{j}\text{-chain such that } q_N = x\right\}.$$
    Prove that $u_j$ is Lipschitz, has bounded support and it has $g_j$ as $\frac{1}{j}$-upper gradient.
    \item[(Step 4)] Conclude the proof showing that $u_j$ converges to $u$ in $L^p(\X)$ via a contradiction argument. The contradiction is obtained by violating the fact that $g$ is an upper gradient of $u$.
\end{itemize}

So, in order to prove that $\HH{p}{curve}(\X) \subseteq \HH{p}{\Chain{},\Lip}(\X)$ one could follow this scheme with minor modifications. However, there are technical issues in adapting the proof to show that $\HH{p}{\Chain{}}(\X) \subseteq \HH{p}{\Chain{},\Lip}(\X)$, that we briefly present in a moment. That is why we will give a different proof of $\HH{p}{curve}(\X) \subseteq \HH{p}{\Chain{},\Lip}(\X)$ too, that can be easily adapted to show $\HH{p}{\Chain{}}(\X) \subseteq \HH{p}{\Chain{},\Lip}(\X)$.

The main problem is related to the reduction in Step 1.
As stated, this is a consequence of the well known locality property of the minimal $p$-weak upper gradient along curves, which can be derived from a Leibniz rule. The same locality property for $p$-weak $\varepsilon$-upper gradients does not hold, and it is also not clear if it is true in a limit sense for $\varepsilon$ going to zero (see Remark \ref{rem:comments_between_CC_EB}). However, a weak Leibniz rule for $\varepsilon$-chain upper gradients holds (see Proposition \ref{prop:Leibniz-rule-chains}). Using it we are able to reduce the proof, also for $\HH{p}{\Chain{}}(\X)$, to bounded functions with bounded support, but \emph{not necessarily nonnegative}.

This difference creates two additional difficulties in the scheme sketched above. The first one is that the approximating functions $u_j$ are not necessarily with bounded support. This requires an additional cutoff argument.
The second difference involves the core of the proof, namely the contradiction argument in Step 4. We need to analyze separately three different cases, one of which is the only one that needed to be considered in \cite{Eriksson-Bique-Density-in-energy}. For more details we refer to Step 8 of the proof of Theorem \ref{theo:density_energy_complete}.

Moreover, the adaptation of the proof to the inclusion $\HH{p}{\Chain{}}(\X) \subseteq \HH{p}{\Chain{},\Lip}(\X)$ requires an additional care in the contradiction argument in Step 4. This is due to the fact that every $\varepsilon$-upper gradient, that takes extended values, satisfies the upper gradient inequality along many curves but not necessarily along all of them, see Proposition \ref{prop:UGeta_subset_UG}.

\subsection{Application to characterizations of Poincar\'{e} inequality}
A standard consequence of theorems like Theorem \ref{theo-intro:equivalences-of-AGS-Chain} is the equivalence between apriori different formulations of the Poincar\'{e} inequality. Tailoring such a discussion in our case, we have the equivalence between the Poincar\'{e} inequality formulated for couples $(u, \lip\, u)$ with $u\in \Lip(\X)$ and $(u,g)$ where $u$ is Borel and $g \in \UG{\varepsilon}(u)$ for some $\varepsilon >0$, see Corollary \ref{cor:PI_lip=chain}. This result holds also in non-complete metric space. While in the complete setting the conditions above are also equivalent to the Poincar\'{e} inequality formulated for couples $(u,g)$ where $u$ is Borel and $g$ is an upper gradient of $u$, this is no more true if the metric measure space is not complete, see Remark \ref{rem:example_not_equivalence_poincare}.

The Poincaré inequality on complete and doubling metric measure spaces can be characterized in at least three ways: Heinonen's pointwise estimates in \cite{Hei01}, Keith's modulus estimates in \cite{Kei03} and via energy of separating sets for $p=1$ in \cite{CaputoCavallucci2024}.
We reinterpret these characterizations also for non-complete spaces in terms of chains respectively in Sections \ref{sec:pt_estimates_chains}, \ref{sec:keith_chain} and \ref{sec:separating_sets_chains}. To treat the analogue of Keith's estimate we need to introduce the notion of modulus of a family of chains and to study its basic properties. This is done in Section \ref{sec:weak_eps_upper_gradient}.

Interestingly, the approach via chains allows to improve a result concerning upper gradients along curves by relaxing the assumptions on the metric measure space. Indeed in Proposition \ref{prop:pt_PI_equivalent_conditions_A_p_connectedness} and Remark \ref{rem:drop_quasiconvexity} we prove the equivalence of the validity of the following Heinonen's pointwise estimates at a fixed couple of points $x,y \in \X$:
\begin{equation*}
    \vert u(x) - u(y) \vert ^p \le C\sfd(x,y)^{p-1}\int (\lip\,u)^p \,\d\mm_{x,y}^L\qquad \text{for all }u \in {\rm Lip}(\X)
\end{equation*}
and
\begin{equation*}
    \vert u(x) - u(y) \vert ^p \le C\sfd(x,y)^{p-1}\int g^p \,\d\mm_{x,y}^L\qquad\text{for all $u$ Borel and $g$ upper gradient of $u$.}
\end{equation*}
Here $\mm_{x,y}^L$ is the measure that is absolutely continuous with respect to the reference measure with density being the truncated Riesz potential with poles $x$ and $y$ (see Section \ref{sec:pt_estimates_chains}).
This result was proved by the authors in \cite[Theorem A.3]{CaputoCavallucci2024II}, under the additional assumption of local quasiconvexity of the metric space. The use of chains allows to remove this additional assumption. It is likely that similar improvements can be performed in other situations.
\subsection{Structure of the paper}
Section \ref{sec:preliminaries} contains general facts about measure theory, curves and chains on metric spaces. In Section \ref{sec:Cheeger-AGS} we recall the definition of Sobolev spaces via a relaxation approach. In Section \ref{sec:chain_upper_gradients} we define chain upper gradients and we derive basic properties and relations with the classical upper gradients along curves. Section \ref{sec:weak_eps_upper_gradient} introduces the notions of modulus of a family of chains and of weak chain upper gradient. Similarities and differences with the theory of weak upper gradients along curves are shown. In Section \ref{sec:chain_sob_space} we define the Sobolev spaces via chain upper gradients. In Section \ref{sec:proof_main_result} we prove the main results, Theorems \ref{theo-intro:equivalences-of-all-norms-complete-spaces} and \ref{theo-intro:equivalences-of-AGS-Chain}.
Section \ref{sec:PI} contains equivalent formulations of Poincar\'{e} inequality in the possibly non-complete setting in terms of chains.
\subsection{Acknowledgments}
The first author is supported by the European Union’s Horizon 2020 research and innovation programme (Grant
agreement No. 948021).
\section{Preliminaries}
\label{sec:preliminaries}

Let $(\X,\sfd,\mm)$ be a metric measure space. With this term, we mean that $(\X,\sfd)$ is a separable metric space which is not necessarily complete and $\mm$ is a non-trivial outer measure which is Radon and finite on bounded sets. If $(\X,\sfd,\mm)$ is a metric measure space in the sense of \cite[p.62]{HKST15}, i.e it is a triple such that
$(\X,\sfd)$ is complete and separable and $\mm$ is a non-trivial outer measure which is Borel regular and finite on bounded sets, then $\mm$ is Radon by \cite[Proposition 3.3.44]{HKST15}, and so $(\X,\sfd,\mm)$ is a metric measure space in the sense above. This is false if $(\X,\sfd)$ is not complete.
If a triple $(\X,\sfd,\mm)$ is such $(\X,\sfd)$ is separable and $\mm$ is a non-trivial outer measure which is finite on bounded sets, then $\mm$ is Radon if and only if the space $(\bar{\X},\bar{\sfd},\bar{\mm})$ is again a metric measure space as defined above (cp. \cite[Proposition 3.3.46]{HKST15}: the proof still works with our assumption of finiteness on bounded set in place of local finiteness), where $(\bar{\X},\bar{\sfd})$ denotes the completion of $(\X,\sfd)$ and $\bar{\mm}$ is the outer measure $\bar{\mm}(E) := \mm(E\cap \X)$ for every $E\subseteq \bar{\X}$.

A metric measure space $(\X,\sfd,\mm)$ is said to be doubling if there exists $C_D \ge 1$ such that
\begin{equation}
    \label{eq:defin_doubling_measure}
    \mm(B_{2r}(x)) \le C_D \mm(B_r(x)) \qquad \text{ for all } x\in \X, r>0.
\end{equation}
If $(\X,\sfd,\mm)$ is doubling and $(\X,\sfd)$ is complete, then $(\X,\sfd)$ is a proper metric space, i.e. every closed bounded set is compact.

We denote by $\calL{p}(\X)$ the space of functions $u\colon \X \to \R$ such that $\int \vert u \vert^p \,\d \mm < \infty$, and by $L^p(\X)$ its quotient by the equivalence relation that identifies two functions if they agree $\mm$-almost everywhere. The $L^p(\X)$ norm will be denoted by $\Vert \cdot \Vert_{L^p(\X)}$. The class of Lipschitz functions on $\X$ is denoted by $\Lip(\X)$.

A function $u\colon \X \to \R$ is bounded if there exists $M \ge 0$ such that $\vert u \vert \le M$. It has bounded support if there exists a bounded subset $B\subseteq \X$ such that $u\equiv 0$ on $\X \setminus B$. The slope of $u\colon \X \to \mathbb{R}$ at size $\varepsilon > 0$ is defined as
\begin{equation*}
    \sl{\varepsilon} u(x):=\sup_{y \in B_\varepsilon(x)}\frac{|u(y)-u(x)|}{\sfd(y,x)}.
\end{equation*}
If $\varepsilon' < \varepsilon$ then $\sl{\varepsilon'}u \le \sl{\varepsilon} u$. The local Lipschitz constant of $u$ is defined as 
$$\lip\, u(x):=\lim_{\varepsilon \to 0} \sl{\varepsilon} u(x)= \inf_{\varepsilon > 0} \sl{\varepsilon} u(x).$$

\subsection{Relaxation of functionals}
\label{subsec:relaxation}
Let $1\le p < \infty$ and let $\F{}\colon \calL{p}(\X) \to [0,+\infty]$ be a functional such that 
\begin{itemize}
    \item[(a)] $\F{}(0) = 0$,
    \item[(b)] $\F{}(u+v) \le \F{}(u) + \F{}(v)$,
    \item[(c)] $\F{}(\lambda u) = \vert \lambda \vert \F{}(u)$
\end{itemize}
for every $u,v\in \calL{p}(\X)$ and $\lambda \in \R$. By definition, the relaxation of $\F{}$ is the biggest functional $\relF{} \colon \calL{p}(\X) \to [0,+\infty]$ which is lower semicontinuous with respect to the $L^p(\X)$-norm and such that $\relF{} \le \F{}$. A concrete description of $\relF{}$ is given by
\begin{equation}
    \label{eq:defin_relaxation_full_generality}
    \relF{}(u) = \inf\left\{ \limi_{j\to +\infty}\F{}(u_j)\, : \, u_j \underset{L^p(\X)}{\longrightarrow} u\right\}.
\end{equation}
By the definition above, $\relF{}$ induces a functional on $L^p(\X)$ which is lower semicontinuous and satisfies properties (a), (b) and (c) above. Therefore one can define the space
\begin{equation}
    \label{eq:defin_H_full_generality}
    \HH{p}{F}(\X) := \left\{ u \in L^p(\X)\,:\, \relF{}(u) < +\infty \right\}
\end{equation}
endowed with the norm
\begin{equation}
    \label{eq:defin_norm_H_full_generality}
    \Vert u \Vert_{\HH{p}{F}(\X)}^p := \Vert u \Vert_{L^p(\X)}^p + \relF{}(u)^p.
\end{equation}
The normed space $(\HH{p}{F}(\X), \Vert \cdot \Vert_{\HH{p}{F}(\X)})$ is a Banach space since $\relF{}$ is $L^p(\X)$-lower semicontinuous.
Indeed, given a $\HH{p}{F}(\X)$-Cauchy sequence $u_j$, that is also $L^p(\X)$-Cauchy, we can extract an $L^p(\X)$-limit $u$. By lower semicontinuity, $\F{}(u-u_k)\le \limi_{j\to +\infty} \F{}(u_j-u_k)$ for every $k$, thus
\begin{equation*}
    0 \le \lims_{k\to +\infty} \F{}(u-u_k)\le \lims_{k\to +\infty}\limi_{j\to +\infty} \F{}(u_j-u_k)=0
\end{equation*}
where the last equality follows by $\{u_j\}$ being $\HH{p}{F}(\X)$-Cauchy. Throughout the whole paper we will consider several functionals $\F{}$, where it can be readily checked that they always satisfy properties (a), (b) and (c) above.

\subsection{Curves}
A curve is a continuous function $\gamma \colon [a,b] \to \X$ for some $a,b \in \mathbb{R}$ with $a<b$. The starting and final point of $\gamma$ are respectively $\alpha(\gamma):=\gamma(a)$ and $\omega(\gamma) := \gamma(b)$.
The length of a curve $\gamma$ is defined as
\begin{equation}
    \label{eq:defin_length_curve}
    \ell(\gamma):=\sup \left\{ \sum_{i=0}^{N-1} \sfd(\gamma_{t_i},\gamma_{t_{i+1}})\,:\,a=t_0 < t_1 < \dots < t_N=b,\, N \in \mathbb{N}
    \right\}.
\end{equation}
A curve of finite length is called rectifiable. Every rectifiable curve $\gamma \colon [a,b] \to \X$ admits a reparametrization $s_\gamma \colon [0,\ell(\gamma)] \to [a,b]$ by arc-length. This means that the curve $\gamma' := \gamma \circ s_\gamma \colon [0,\ell(\gamma)] \to \X$ satisfies $\ell(\gamma'\restr{[0,t]}) = t$ for every $t\in [0,\ell(\gamma)]$. 
Given two points $x,y\in \X$, we denote by $\Gamma_{x,y}$ the set of rectifiable curves $\gamma$ with $\alpha(\gamma)=x$ and $\omega(\gamma)=y$.

The integral of a Borel function $g\colon \X \to [0,+\infty]$ over a rectifiable curve $\gamma$ is
\begin{equation}
    \label{eq:defin_integral_over_curve}
    \int_\gamma g := \int_0^{\ell(\gamma)} g(\gamma(s_\gamma(t)))\,\d t.
\end{equation}

\subsection{Chains}
Let $(\X,\sfd)$ be a metric space and $\varepsilon > 0$.
A $\varepsilon$-chain is a finite collection of points $\{ q_i \}_{i=0}^N$ such that $\sfd(q_i,q_{i+1})\le \varepsilon$. The set of all $\varepsilon$-chains of $\X$ is denoted by $\Chain{\varepsilon}(\X)$. The set of all chains of $\X$ is $\Chain{}(\X) := \bigcup_{\varepsilon > 0} \Chain{\varepsilon}(\X)$. When the context is clear we simply write $\Chain{\varepsilon}$ and $\Chain{}$.
More generally, if $E$ is a subset of $\X$, we set $\Chain{}(E) := \{ \sfc = \{q_i\}_{i=0}^N \in \Chain{} \,:\, q_i \in E \text{ for some } 0\le i \le N\}$ and $\Chain{\varepsilon}(E) := \Chain{}(E) \cap \Chain{\varepsilon}$. Notice that the two definitions of $\Chain{\varepsilon}(\X)$ and $\Chain{}(\X)$ are consistent.

The first and last points of $\sfc = \{q_i\}_{i=0}^N \in \Chain{}$ are respectively $\alpha(\sfc) := q_0$ and $\omega(\sfc) = q_N.$
The concatenation of $\sfc = \{q_i\}_{i=0}^N \in \Chain{\varepsilon}, \sfc' =\{q_i'\}_{i=0}^{N'} \in \Chain{\varepsilon'}$ such that $\omega(\sfc) = \alpha(\sfc')$ is defined as
\begin{equation}
    \label{eq:defin_concatenation}
    \sfc\star \sfc' = \{q_0,\ldots,q_N = q_0',q_1',\ldots,q_{N'}'\}.
\end{equation}
Notice that $\sfc\star \sfc' \in \Chain{\varepsilon \vee \varepsilon'}$ and $\alpha(\sfc\star \sfc') = \alpha(\sfc)$, $\omega(\sfc\star \sfc') = \omega(\sfc')$. The inverse of a chain $\sfc = \{q_0,\ldots,q_N\} \in \Chain{}$ is $-\sfc := \{q_N,\ldots,q_0\}$.
If $x,y\in \X$ are two points then we set $\Chain{}_{x,y} := \{\sfc \in \Chain{} \, : \, \alpha(\sfc) = x, \omega(\sfc) = y\}$ and $\Chain{\varepsilon}_{x,y} := \Chain{}_{x,y} \cap \Chain{\varepsilon}$. 

A metric space $(\X,\sfd)$ is said to be $\varepsilon$-chain connected if $\Chain{\varepsilon}_{x,y} \neq \emptyset$ for every $x,y \in \X$. A metric space can be decomposed in $\varepsilon$-chain connected components in the following way. Given two points $x,y\in \X$ we say that $x \sim_\varepsilon y$ if and only if $\Chain{\varepsilon}_{x,y} \neq \emptyset$. This defines an equivalence relation on $\X$. Such a relation partitions $\X$ into a family of sets $\{A_i\}_{i \in I}$. If $(\X,\sfd)$ is separable, it can be readily checked that the set of indices $I$ is countable. Moreover, every set $A_i$ is $\varepsilon$-chain connected: it is called a $\varepsilon$-chain connected component of $\X$. 
By definition, every $\varepsilon$-chain connected component is both open and closed.
Moreover, we have that 
\begin{equation}
\label{eq:chain_connected_components_positive_distance}
    \sfd(A_i,A_j) \ge \varepsilon,\qquad \text{if }i \neq j.
\end{equation}

Let $\varepsilon' \le \varepsilon$ and let $\X =\bigcup_{i\in I_\varepsilon} A_i^\varepsilon =\bigcup_{i\in I_{\varepsilon'}} A_i^{\varepsilon'}$, be the two decompositions where $\{A_i^\varepsilon\}_{i\in I_\varepsilon}$ and $\{A_i^{\varepsilon'}\}_{i \in I_{\varepsilon'}}$ are respectively the $\varepsilon$ and $\varepsilon'$-chain connected components of $\X$. Then, for every $i \in I_{\varepsilon'}$ there exists $j\in I_{\varepsilon}$ such that $A_i^{\varepsilon'} \subseteq A_j^{\varepsilon}$. The set of $\varepsilon$-chain connected components of $\X$ is denoted by $\Chain{\varepsilon}\textup{-cc}(\X)$.

Given $\sfc=\{ q_i \}_{i=0}^N \in \Chain{}$ and a function $g \colon \X \to [0,+\infty]$ we define
\begin{equation}
\label{eq:defin_integral_on_chains}
    \int_{{\sfc}} g := \sum_{i=0}^{N-1}\frac{g(q_i) + g(q_{i+1})}{2}\sfd(q_i,q_{i+1}).
\end{equation}
For every function $g$ and every two chains $\sfc,\sfc' \in \Chain{}$ it holds
\begin{equation}
\label{eq:integral_is_symmetric}
    \int_{\sfc \star \sfc'} g = \int_\sfc g + \int_{\sfc'} g,\qquad \int_{-\sfc} g = \int_\sfc g.
\end{equation}
Moreover the integral over a fixed chain $\sfc$ is linear, i.e. $\int_\sfc a g + b h = a \int_\sfc g + b \int_\sfc h$ for every $g,h \colon \X \to [0,+\infty]$ and every $a,b \ge 0$. 

The length of a chain $\sfc = \{q_i\}_{i=0}^N \in \Chain{}$ is $\ell(\sfc):=\int_{\sfc} 1 =\sum_{i=0}^{N-1}\sfd(q_i,q_{i+1})$. 
\begin{remark}
\label{rmk:lambda_integral_over_chains}
    There is not a canonical way to define the integral over a chain. Let $\lambda \in [0,1]$. If $a,b \in \R \cup \{+\infty\}$ we set $[a,b]_\lambda := \lambda a + (1-\lambda) b$. Given $\sfc=\{ q_i \}_{i=0}^N \in \Chain{}$, a function $g \colon \X \to [0,+\infty]$ and $\lambda \in [0,1]$, we define the $\lambda$-integral of $g$ over $\sfc$ as
    \begin{equation}
    \label{eq:defin_integral_on_chains_lambda}
        \leftindex^\lambda{\int}_{{\sfc}} g := \sum_{i=0}^{N-1}[g(q_i),g(q_{i+1})]_\lambda\sfd(q_i,q_{i+1}).
    \end{equation}
    When $\lambda = \frac12$ we recover the definition in \eqref{eq:defin_integral_on_chains}, while for $\lambda = 1$ we find the expression used in \cite{Eriksson-Bique-Density-in-energy}. 
    The $\lambda$-integral is linear in the sense above and it satisfies the first equality of \eqref{eq:integral_is_symmetric}. The second equality of \eqref{eq:integral_is_symmetric} becomes
    $\leftindex^\lambda{\int}_{-\sfc} g = \leftindex^{1-\lambda}{\int}_\sfc g$ for every $g\colon \X \to [0,+\infty]$ and every $\sfc \in \Chain{}$. In the paper we will present the results for the $\frac12$-integral, for simplicity, and we will briefly comment on how it works for different values of $\lambda \in [0,1]$.
\end{remark}

\subsection{Convergence of chains to curves}
We recall the notion of convergence of a sequence of chains to a curve defined in \cite{Eriksson-Bique-Density-in-energy}. Given a chain $\sfc = \{q_i\}_{i=0}^N$ we define the set of interpolating times as $(t_0, \ldots, t_N)$ by $t_0 = 0$ and $t_i = \frac{\ell(\{q_0,\ldots,q_i\})}{\ell(\sfc)}$. Then we define the function $\gamma_{\sfc}\colon [0,1] \to \X$ piecewisely defined by $[t_i,t_{i+1}) \ni t \mapsto q_i$, for $i=0,\ldots,N$. We say that a sequence of chains $\{\sfc_j\}_j$ converges to a curve $\gamma \colon [0,1] \to \X$ if $\{\gamma_{\sfc_j}\}_j$ converges uniformly to $\gamma$ as $j$ goes to $+\infty$. We have the following compactness result for complete metric spaces.
    \begin{proposition}[{\cite[Lemma 2.18]{Eriksson-Bique-Density-in-energy}}]
    \label{prop:Sylvester_compactness_chains}
    Let $(\X,\sfd)$ be a complete metric space. Let $K_j \subseteq \X$ be an increasing sequence of compact subsets of $\X$ and let $h_j(x):= \sum_{i=1}^j \min\{ j\sfd(x,K_i),1\}$. Let $M,L,\Delta > 0$ be constants. Let $\sfc_j = \{q_0^j,\ldots,q_{N_j}^j\} \in \Chain{\frac{1}{j}}(\X)$ be chains such that:
    \begin{itemize}
        \item[(i)] $\ell(\sfc_j) \le L$ for every $j$;
        \item[(ii)] ${\rm Diam}(\sfc_j) := \max\{ \sfd(q,q') \, : \, q,q'\in \sfc_j\} \ge \Delta$ for every $j$;
        \item[(iii)] $\sum_{m=0}^{N_j-1} h_j(q_m^j)\sfd(q_m^j,q_{m+1}^j) \le M$ for every $j$.
    \end{itemize}
    Then there exists a subsequence of $\{\sfc_j\}_j$ that converges to a curve $\gamma \colon [0,1] \to \X$. 
\end{proposition}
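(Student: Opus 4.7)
The plan is to apply an Arzel\`a--Ascoli argument to the step functions $\gamma_{\sfc_j}\colon [0,1]\to\X$. Two ingredients are needed: an approximate $L$-Lipschitz estimate giving equicontinuity, and a uniform tightness statement placing all $\gamma_{\sfc_j}([0,1])$ into a common totally bounded set. The first will come from (i)--(ii), the second from (iii) together with the precise structure of $h_j$.

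For equicontinuity, $\ell(\sfc_j)\ge {\rm Diam}(\sfc_j)\ge \Delta$ together with $\sfd(q_m^j,q_{m+1}^j)\le 1/j$ gives that the time of a single edge is $t_{m+1}-t_m=\sfd(q_m^j,q_{m+1}^j)/\ell(\sfc_j)\le 1/(j\Delta)$, and a short direct computation yields
\begin{equation*}
\sfd(\gamma_{\sfc_j}(s),\gamma_{\sfc_j}(t))\le L\,|t-s|+\frac{L}{j\Delta}\qquad \text{for every }s,t\in[0,1].
\end{equation*}
For tightness, fix $\delta>0$. Since the $K_i$ are nested, any $x$ with $\sfd(x,K_{i_0})\ge\delta/3$ satisfies $\sfd(x,K_i)\ge\delta/3$ for every $i\le i_0$, so as soon as $j\ge 3/\delta$ each of the first $i_0$ summands defining $h_j(x)$ equals $1$, forcing $h_j(x)\ge i_0$. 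Hypothesis (iii) then reads
\begin{equation*}
i_0\sum_{m\,:\,\sfd(q_m^j,K_{i_0})\ge\delta/3}\sfd(q_m^j,q_{m+1}^j)\le M,
\end{equation*}
and after dividing by $\ell(\sfc_j)\ge\Delta$, the Lebesgue measure of $\{t\in [0,1]\,:\,\sfd(\gamma_{\sfc_j}(t),K_{i_0})\ge\delta/3\}$ is at most $M/(i_0\Delta)$. Choosing $i_0$ with $M/(i_0\Delta)<\delta/(3L)$, every $t$ lies within time $\delta/(3L)$ of a ``good'' parameter $t'$, and the equicontinuity estimate yields $\sfd(\gamma_{\sfc_j}(t),K_{i_0})<\delta$ uniformly in $t$ for $j$ large.

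Combining the two, the set $\bigcup_j \gamma_{\sfc_j}([0,1])$ is totally bounded, since for every $\delta>0$ it is contained, outside finitely many of the $\gamma_{\sfc_j}([0,1])$ (each a finite set), in a $\delta$-neighbourhood of a compact set. Completeness of $\X$ then makes its closure compact. A diagonal extraction along a countable dense subset of $[0,1]$ now produces a subsequence that converges pointwise on that dense set, and the approximate equicontinuity upgrades pointwise to uniform convergence on $[0,1]$. The uniform limit is continuous, hence a curve $\gamma\colon[0,1]\to\X$, which by the definition recalled before the statement is exactly the asserted convergence. The only delicate part is the tightness step: it is there that the precise shape of $h_j$ and the interplay between the mesh $1/j$ and the nested compacts $K_i$ matter, while the remaining pieces are either direct computation or a textbook Arzel\`a--Ascoli argument.
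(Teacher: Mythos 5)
Your proof is correct. The paper does not reprove this statement — it is quoted directly from \cite[Lemma 2.18]{Eriksson-Bique-Density-in-energy} — and your argument is a sound, self-contained version of exactly the intended Arzel\`a--Ascoli scheme: the approximate Lipschitz bound $\sfd(\gamma_{\sfc_j}(s),\gamma_{\sfc_j}(t))\le L|t-s|+L/(j\Delta)$ from (i)--(ii), the use of (iii) with the nesting of the $K_i$ to confine all but a time-set of measure $M/(i_0\ell(\sfc_j))\le M/(i_0\Delta)$ near $K_{i_0}$, and the resulting total boundedness plus completeness to run the diagonal extraction.
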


\begin{remark}
    \label{rmk:compactness_chains_with_lambda}
    Condition (iii) in Proposition \ref{prop:Sylvester_compactness_chains} is exactly $\leftindex^1{\int}_{\sfc_j} h_j \le M$, for every $j$. The proof of \cite[Lemma 2.18]{Eriksson-Bique-Density-in-energy} can be straightforwardly modified replacing (iii) with
    \begin{itemize}
        \item[(iii)$_\lambda$] $\leftindex^\lambda{\int}_{\sfc_j} h_j \le M$ for every $j$,
    \end{itemize}
    for every $\lambda \in [0,1]$.
\end{remark}

The next goal is to compare the integral of a function along a sequence of chains $\{\sfc_j\}_j$ converging to a curve $\gamma$ with the integral of the same function on $\gamma$. To this aim, we need the following approximation of Lebesgue integral by Riemann sums. A classical reference is \cite[Pag.\ 63]{Doob90}, but we adopt a strategy very close to the proof in \cite[Prop.\ 3.18]{CaputoGigliPasqualetto2021}.
\begin{proposition}
\label{prop:doob_approximation}
    Let $f \colon [0,\ell] \to \mathbb{R} \cup \{+\infty\}$ be an integrable function such that $f(0),f(\ell) < \infty$. For $n \in \N$ and $t\in [0,1]$ we set
    \begin{equation}
        \begin{aligned}
            R_t(f,n) := \frac{f(0) + f\left(\ell\left(\frac{t}{n}\right)\right)}{2} \cdot \ell\left(\frac{t}{n}\right) &+ \sum_{i=0}^{n-2}\frac{f\left(\ell\left(\frac{t+i}{n}\right)\right)+f\left(\ell\left(\frac{t+(i+1)}{n}\right)\right)}{2}\cdot\frac{\ell}{n} \\
            &+ \frac{f\left(\ell \left(\frac{t+n-1}{n}\right)\right) + f(\ell)}{2} \cdot \ell\left(\frac{1-t}{n}\right).
        \end{aligned}
    \end{equation}
    Then
    $$\lim_{n\to +\infty} \int_0^1 \left\vert R_t(f,n) - \int_0^\ell f(s) \,\d s \right\vert\,\d t = 0.$$
\end{proposition}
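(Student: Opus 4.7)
The strategy I would adopt is a density argument. If $\tilde f \in C([0,\ell])$, then $R_t(\tilde f,n)$ is the ordinary trapezoidal Riemann sum for the partition $\{0,\ell t/n,\ell(t+1)/n,\ldots,\ell(t+n-1)/n,\ell\}$ of $[0,\ell]$, whose mesh is at most $\ell/n$; by uniform continuity on the compact $[0,\ell]$ we get $R_t(\tilde f,n)\to \int_0^\ell \tilde f$ uniformly in $t\in[0,1]$, hence also in $L^1(\d t)$. The proof therefore reduces to approximating $f$ by such an $\tilde f$ and controlling the error in that replacement.

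Given $\varepsilon>0$, the key preparatory step is to construct $\tilde f\in C([0,\ell])$ satisfying
\begin{equation*}
\tilde f(0)=f(0),\qquad \tilde f(\ell)=f(\ell),\qquad \|f-\tilde f\|_{L^1([0,\ell])}<\varepsilon.
\end{equation*}
This is standard: one first approximates $f$ in $L^1$ by a bounded continuous $\hat f$, then modifies $\hat f$ linearly on two small intervals $[0,\delta]\cup[\ell-\delta,\ell]$ to reach the (finite) values $f(0),f(\ell)$, the modification costing only $O(\delta)$ in $L^1$. By linearity (interpreted in $\R\cup\{+\infty\}$) one has $R_t(f,n)-R_t(\tilde f,n)=R_t(f-\tilde f,n)$ for a.e. $t$, and, crucially, since $(f-\tilde f)(0)=(f-\tilde f)(\ell)=0$ the two endpoint contributions in $R_t(f-\tilde f,n)$ vanish identically.

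The heart of the proof is then the Fubini bound
\begin{equation*}
\int_0^1 |R_t(f-\tilde f,n)|\,\d t \;\le\; \int_0^1 R_t(|f-\tilde f|,n)\,\d t \;\le\; \|f-\tilde f\|_{L^1([0,\ell])},
\end{equation*}
which I would establish by the change of variable $s=\ell(t+i)/n$ on the $i$-th slot. The $n-1$ middle contributions recombine by telescoping into the piece $\int_{\ell/n}^{\ell(n-1)/n}|f-\tilde f|$ plus halves of the two tails, while the two short end terms, of lengths $\ell t/n$ and $\ell(1-t)/n$, are absorbed using the pointwise estimate $s\cdot n/\ell\le 1$ on $[0,\ell/n]$ and symmetrically on $[\ell(n-1)/n,\ell]$; adding everything yields exactly $\int_0^\ell |f-\tilde f|$.

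Combining the two ingredients, for all sufficiently large $n$,
\begin{equation*}
\begin{split}
\int_0^1 \Bigl|R_t(f,n)-\int_0^\ell f(s)\,\d s\Bigr|\,\d t
&\le \int_0^1 |R_t(f-\tilde f,n)|\,\d t \\
&\quad + \sup_{t\in[0,1]}\Bigl|R_t(\tilde f,n)-\int_0^\ell \tilde f(s)\,\d s\Bigr| + \|f-\tilde f\|_{L^1} < 3\varepsilon,
\end{split}
\end{equation*}
which proves the claim. The main technical subtlety, and the place where the hypothesis $f(0),f(\ell)<\infty$ is essential, is precisely the endpoint matching in the construction of $\tilde f$: without the cancellation $(f-\tilde f)(0)=(f-\tilde f)(\ell)=0$ the singular first and last terms $\tfrac{f(0)}{2}\ell t/n$ and $\tfrac{f(\ell)}{2}\ell(1-t)/n$ would survive in $R_t(f-\tilde f,n)$ and spoil the Fubini estimate, since the weights $\ell t/n$ and $\ell(1-t)/n$ do not shrink uniformly over the relevant $t$.
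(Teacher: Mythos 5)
Your proof is correct, and it follows the same overall strategy as the paper's: reduce to continuous functions by density in $L^1$, use uniform continuity there, and control the replacement error by a Fubini-type bound $\int_0^1 R_t(|f-\tilde f|,n)\,\d t \le \|f-\tilde f\|_{L^1}$. The one genuine structural difference is where the hypothesis $f(0),f(\ell)<\infty$ enters. The paper first splits off the two endpoint terms, i.e.\ writes $R_t = (R_t - R_t') + R_t'$ with $R_t'$ the middle sum, kills $\int_0^1|R_t-R_t'|\,\d t$ directly via $\frac{\ell}{2n}(f(0)+f(\ell)) + \frac12\int_0^{\ell/n}f + \frac12\int_{\ell(n-1)/n}^{\ell}f \to 0$, and then runs the density argument only on $R_t'$ (with the cruder stability constant $2$). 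You instead build the continuous approximant $\tilde f$ with matched endpoint values so that the singular endpoint terms cancel exactly in $R_t(f-\tilde f,n)$, which lets you treat the full $R_t$ in one pass and yields the sharp constant $1$ in the Fubini bound. Both routes are valid; yours is marginally cleaner in that the endpoint issue is dealt with once in the construction of $\tilde f$ rather than in a separate limit, at the small cost of having to justify the endpoint-matched approximation (which you do correctly, since $f(0),f(\ell)$ are finite and the linear modification near the endpoints costs only $O(\delta)$ in $L^1$).
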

\begin{remark}
\label{rmk:approximation_integral_a.e.t}
    The quantity $R_t(f,n)$ should be thought as a Riemann sum associated to the partition $0 \le \ell\left(\frac{t}{n}\right) < \ldots < \ell \left( \frac{t+n-1}{n} \right) \le \ell$ of $[0,\ell]$. The difference is in the average of $f$ evaluated at two successive points of the partition instead of the value of $f$ at every point of the partition. This is due to our definition of integral along chains. 
    The statement above implies that, up to subsequence, $R_t(f,n) \to \int_0^\ell f(s)\,\d s$ for a.e.\ $t \in [0,1]$.
\end{remark}
\begin{proof}
    For an integrable function $f\colon [0,\ell] \to \R \cup \{+\infty\}$ we define the auxiliary quantity
    \begin{equation}
        R_t'(f,n) := \sum_{i=0}^{n-2}\frac{f\left(\ell\left(\frac{t+i}{n}\right)\right)+f\left(\ell\left(\frac{t+(i+1)}{n}\right)\right)}{2}\cdot\frac{\ell}{n},
    \end{equation}
    which is the middle term in the definition of $R_t(f,n)$. First of all we estimate
    \begin{equation}
        \label{eq:R_t-R_t'}
        \begin{aligned}
            \int_0^1\left\vert R_t(f,n) - R_t'(f,n)  \right\vert\,\d t &\le \frac{\ell}{n}\int_0^1\left\vert \frac{f(0) + f\left(\ell\left(\frac{t}{n}\right)\right)}{2} \right\vert\ + \left\vert \frac{f\left(\ell \left(\frac{t+n-1}{n}\right)\right) + f(\ell)}{2} \right\vert\,\d t \\
            &\le \frac{\ell}{2n}\left( f(0) + f(\ell)\right) + \frac{1}{2}\left(\int_0^{\frac{\ell}{n}} f(u)\,\d u + \int_{\ell\left(\frac{n-1}{n}\right)}^\ell f(u)\,\d u \right),
        \end{aligned}
    \end{equation}
    and the last two terms go to $0$ as $n$ goes to $\infty$, respectively because $f$ assume finite values at $0$ and $\ell$ and dominated convergence. We now set
    \begin{equation*}
    \begin{aligned}
        D(f,n):=\int_0^1 \left| R_t'(f,n) -\int_0^\ell f(s)\,\d s \right|\,\d t.
    \end{aligned}
    \end{equation*}
    If $f \in C^0([0,\ell])$ we have
    \begin{equation}
    \label{eq:hahn_computation1}
        D(f,n) \le \int_0^1 \sum_{i=0}^{n-2} \int_{\ell\left(\frac{i}{n}\right)}^{\ell\left(\frac{i+1}{n}\right)} \left| \frac{f\left(\ell\left(\frac{t+i}{n}\right)\right)-f(s)}{2}\right|+\left| \frac{f\left(\ell\left(\frac{t+i+1}{n}\right)\right)-f(s)}{2}\right|\,\d s\,\d t 
        \le \ell\varepsilon.
    \end{equation}
    for $n > n(f,\varepsilon)$, by uniform continuity of $f$ on $[0,\ell]$.
    Finally, given two Borel functions $f$ and $f'$, we estimate
    \begin{equation}
    \label{eq:hahn_computation2}
    \begin{aligned}
        |D(f,n)-D(f',n)| & \le
        \int_0^1 |R_t'(f,n)-R_t'(f',n)|\,\d t + \int_0^1 \left| \int_0^\ell (f-f')(s)\,\d s \right|\,\d t
        \\
        &\le 
        \int_0^1 \left| \sum_{i=0}^{n-2}\frac{(f-f')\left(\ell\left(\frac{t+i}{n}\right)\right)+(f-f')\left(\ell\left(\frac{t+(i+1)}{n}\right)\right)}{2}\cdot\frac{\ell}{n}\right|\,\d t+\|f-f'\|_{L^1(0,\ell)}\\
        & \le \frac{1}{2}\sum_{i=0}^{n-2}\int_0^1 |f-f'|\left( \ell\left(\frac{t+i}{n} \right)\right)\,\d t + \frac{1}{2}\sum_{i=0}^{n-2}\int_0^1 |f-f'|\left( \ell\left(\frac{t+i}{n} \right)\right)\,\d t   \\
        &\,\,\,\,\,\,+ \|f-f'\|_{L^1(0,\ell)}\\
        &\le \frac{1}{2}\sum_{i=0}^{n-2}\int_{\ell(\frac{i}{n})}^{\ell(\frac{i+1}{n})} |f-f'|\,\d t + \frac{1}{2}\sum_{i=0}^{n-2}\int_{\ell(\frac{i+1}{n})}^{\ell(\frac{i+2}{n})} |f-f'|\,\d t 
        + \|f-f'\|_{{\rm L}^1(0,\ell)}\\
        &\le 2 \|f-f'\|_{L^1(0,\ell)}.\\
    \end{aligned}
    \end{equation}
    By approximating $f \in L^1([0,\ell])$ in $L^1$-norm with a sequence $\{f_j\} \subseteq C^0([0,\ell])$, applying the triangular inequality, \eqref{eq:hahn_computation1} and \eqref{eq:hahn_computation2} we conclude that $\lims_{n \to \infty} D(f,n)=0$. This, together with \eqref{eq:R_t-R_t'}, proves the claim.
\end{proof}

\begin{remark}
\label{rmk:section_4_for_lambda}
    Given $\lambda \in [0,1]$, one can prove the equivalent of Proposition \ref{prop:doob_approximation} for the approximation
    \begin{equation}
       \begin{aligned}
           R_t^\lambda(f,n) := \left[f(0), f\left(\ell\left(\frac{t}{n}\right)\right)\right]_\lambda \cdot \ell\left(\frac{t}{n}\right) &+ \sum_{i=0}^{n-2}\left[f\left(\ell\left(\frac{t+i}{n}\right)\right),f\left(\ell\left(\frac{t+(i+1)}{n}\right)\right)\right]_\lambda\cdot\frac{\ell}{n} \\
           &+ \left[f\left(\ell \left(\frac{t+n-1}{n}\right)\right), f(\ell)\right]_\lambda \cdot \ell\left(\frac{1-t}{n}\right),
       \end{aligned}
   \end{equation}
   where $f\colon [0,\ell] \to \R \cup \{+\infty\}$ is integrable and such that $f(0),f(\ell) < \infty$. When $\lambda = 1$, this is exactly the statement of \cite[Pag.63]{Doob90}.
\end{remark}

On one side we can adapt the proof of \cite[Lemma 2.19]{Eriksson-Bique-Density-in-energy} to prove the following lemma.
\begin{lemma}
    \label{lemma:lower-semicontinuity-integral-chains-convergence}
    Let $(\X,\sfd)$ be a metric space.
    Let $g \colon \X \to [0,+\infty]$ be a lower semicontinuous function. Let $g_j \colon \X \to [0,+\infty)$ be a sequence of continuous functions such that $g_j(x) \nearrow g(x)$ for every $x\in \X$. Let $\{\sfc_j\}_j$ be a sequence of chains with $\sup_j \ell(\sfc_j) <\infty$ and converging to a curve $\gamma\colon [0,1] \to \X$. Then
    $$ \int_\gamma g \,\d s \le \limi_{j\to +\infty} \int_{\sfc_j} g_j.$$
\end{lemma}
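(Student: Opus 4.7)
The plan is a monotone-convergence reduction followed by a Lipschitz bound on the limit curve and Fatou's lemma.

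Since $g_j\ge g_k$ for $j\ge k$ gives $\int_{\sfc_j} g_j\ge\int_{\sfc_j} g_k$, and since monotone convergence yields $\int_\gamma g=\sup_k\int_\gamma g_k$, it suffices to prove for every fixed $k$ the inequality $\int_\gamma g_k\le\liminf_j\int_{\sfc_j} g_k$ and then let $k\to\infty$. Set $h:=g_k$, which is continuous and nonnegative, and extract a subsequence along which the liminf is attained and $\ell(\sfc_j)\to\ell^\ast\in[0,\sup_j\ell(\sfc_j)]$. If $\ell^\ast=0$, the image $\gamma_{\sfc_j}([0,1])\subseteq\sfc_j$ has diameter $\le\ell(\sfc_j)\to 0$, so the uniform limit $\gamma$ is constant, $\int_\gamma h=0$, and the inequality is trivial; assume therefore $\ell^\ast>0$.

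The crucial observation is that $\gamma$ is automatically $\ell^\ast$-Lipschitz. For $s<t$ in $[0,1]$ with $\gamma_{\sfc_j}(s)=q^j_a$ and $\gamma_{\sfc_j}(t)=q^j_b$, the triangle inequality along the chain yields
\begin{equation*}
    \sfd(q^j_a,q^j_b)\le\sum_{i=a}^{b-1}\sfd(q^j_i,q^j_{i+1})=\ell(\sfc_j)(t^j_b-t^j_a)\le\ell(\sfc_j)(t-s+\delta_j),
\end{equation*}
where $\delta_j:=\max_i(t^j_{i+1}-t^j_i)$. A contradiction argument shows $\delta_j\to 0$: otherwise one extracts an index $k_j$ with $[t^j_{k_j},t^j_{k_j+1}]\to[a,b]$ and $b-a\ge\eta>0$; then $\gamma_{\sfc_j}$ is constant on $[t^j_{k_j},t^j_{k_j+1})$, so by uniform convergence $\gamma\equiv\gamma(a)$ on $[a,b]$ (in particular $\gamma(a)=\gamma(b)$), yet $\sfd(q^j_{k_j},q^j_{k_j+1})=\ell(\sfc_j)(t^j_{k_j+1}-t^j_{k_j})\to\ell^\ast(b-a)>0$ while $q^j_{k_j}\to\gamma(a)$ and $q^j_{k_j+1}\to\gamma(b)=\gamma(a)$ force $\sfd\to 0$, a contradiction. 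Passing to the limit therefore yields $\sfd(\gamma(s),\gamma(t))\le\ell^\ast(t-s)$, so $|\dot\gamma_t|\le\ell^\ast$ a.e., and the standard arclength change-of-variable gives
\begin{equation*}
    \int_\gamma h=\int_0^1 h(\gamma(t))\,|\dot\gamma_t|\,\d t\le\ell^\ast\int_0^1 h(\gamma(t))\,\d t.
\end{equation*}

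Finally, write $\int_{\sfc_j} h=\ell(\sfc_j)\int_0^1 A_j(t)\,\d t$ with $A_j(t):=\tfrac12(h(q^j_i)+h(q^j_{i+1}))$ on $[t^j_i,t^j_{i+1})$. Both $q^j_i=\gamma_{\sfc_j}(t)$ and $q^j_{i+1}=\gamma_{\sfc_j}(t^j_{i+1})$ converge to $\gamma(t)$ (uniform convergence, $\delta_j\to 0$ and continuity of $\gamma$), so continuity of $h$ forces $A_j(t)\to h(\gamma(t))$ pointwise. Fatou's lemma then yields $\liminf_j\int_0^1 A_j\ge\int_0^1 h(\gamma(t))\,\d t$ and hence
\begin{equation*}
    \liminf_j\int_{\sfc_j} h=\ell^\ast\liminf_j\int_0^1 A_j\ge\ell^\ast\int_0^1 h(\gamma(t))\,\d t\ge\int_\gamma h,
\end{equation*}
concluding the proof together with the monotone-convergence reduction. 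The most delicate step is the mesh-to-zero contradiction argument needed to upgrade uniform convergence into the $\ell^\ast$-Lipschitz bound on the limit curve, since the statement does not presuppose any upper bound on the size parameter of the chains.
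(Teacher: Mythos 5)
Your proof is correct. Note that the paper does not actually write out an argument for this lemma: it defers entirely to \cite[Lemma 2.19]{Eriksson-Bique-Density-in-energy}, remarking only that the $\lambda=1$ integral there must be replaced by the symmetric $\lambda=\tfrac12$ integral. What you have produced is therefore a self-contained reconstruction rather than a literal match, but the skeleton is the expected one: reduce by monotone convergence to a fixed continuous $h=g_k$ (using $\int_{\sfc_j}g_j\ge\int_{\sfc_j}g_k$ for $j\ge k$), extract a subsequence realizing the liminf with $\ell(\sfc_j)\to\ell^\ast$, and compare the discrete averages with $\int_0^1 h(\gamma(t))\,\d t$ via Fatou. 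The two genuinely delicate points are both handled: the mesh $\delta_j\to 0$ (your contradiction argument, which correctly exploits that $\sfd(q^j_{k_j},q^j_{k_j+1})=\ell(\sfc_j)(t^j_{k_j+1}-t^j_{k_j})$ while both endpoints collapse to the same point of the constant piece of $\gamma$), and the resulting $\ell^\ast$-Lipschitz bound on $\gamma$, which in particular guarantees that $\gamma$ is rectifiable so that $\int_\gamma g$ is even defined, and which gives $\int_\gamma h\le\ell^\ast\int_0^1 h(\gamma(t))\,\d t$. The factorization $\int_{\sfc_j}h=\ell(\sfc_j)\int_0^1A_j$ is exactly where the symmetric average $\tfrac12(h(q_i)+h(q_{i+1}))$ enters, i.e.\ the $\lambda=\tfrac12$ modification the paper alludes to; your observation that $\delta_j\to0$ forces \emph{both} $q^j_i$ and $q^j_{i+1}$ to converge to $\gamma(t)$ is precisely what makes this average converge pointwise. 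The only step you use without justification is the standard change-of-variables identity $\int_\gamma h=\int_0^1h(\gamma(t))|\dot\gamma_t|\,\d t$ for Lipschitz curves; it would be worth a citation (e.g.\ \cite[Theorem 5.1.8]{HKST15}), but this is a presentational point, not a gap.
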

\begin{proof}
    The proof is identical to the one of \cite[Lemma 2.19]{Eriksson-Bique-Density-in-energy}, where he uses the notion of $\lambda$-integral along chains with parameter $\lambda = 1$.
    We just need to modify it for the $\lambda = \frac12$-integral as we did for the proof of Proposition \ref{prop:doob_approximation} with respect to the original proof in \cite{Doob90}.
\end{proof}

On the other hand we have the next result.
\begin{proposition}
\label{prop:approx_integral_curve_with_subchains}
    Let $(\X,\sfd)$ be a metric space, let $\gamma\colon [0,L] \to \X$ be a curve parametrized by arc-length and let $g\colon \X \to [0,+\infty]$ be such that $g(\alpha(\gamma)), g(\omega(\gamma)) < \infty$ and $\int_\gamma g < +\infty$. For $t\in [0,1]$ and $n\in \N$ define
    \begin{equation}
    \sfc_{t,n} := \left\{\gamma(0), \gamma\left(L\left(\frac{t}{n}\right)\right), \gamma\left(L\left(\frac{t + 1}{n}\right)\right), \ldots, \gamma\left(L\left(\frac{t + n - 1}{n}\right)\right), \gamma(L) \right\} \in \Chain{\frac{L}{n}}.
\end{equation}
    Then, there exists $t\in [0,1]$ and a subsequence $n_j$ such that
    $$\int_\gamma g \ge \lims_{j\to +\infty} \int_{\sfc_{t,n_j}}g.$$
\end{proposition}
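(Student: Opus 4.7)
The approach is to compare $\int_{\sfc_{t,n}}g$ with the Riemann-type sum $R_t(g\circ\gamma,n)$ from Proposition \ref{prop:doob_approximation}, and then appeal to that proposition to extract a good subsequence.

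Since $\gamma$ is parametrized by arc-length, $\sfd(\gamma(s),\gamma(s'))\le|s-s'|$ for all $s,s'\in[0,L]$. Writing out the definition \eqref{eq:defin_integral_on_chains} for the chain $\sfc_{t,n}$ and comparing it termwise with the formula for $R_t(f,n)$ in Proposition \ref{prop:doob_approximation} applied to $f:=g\circ\gamma$ and $\ell:=L$, the inequality
$$\int_{\sfc_{t,n}} g \;\le\; R_t(g\circ\gamma,n)$$
holds in $[0,+\infty]$ for every $t\in[0,1]$ and $n\in\N$; this uses only $g\ge 0$ and the step-by-step comparison of lengths, where the three pieces of the chain (initial short step of length $\le L(t/n)$, $n-1$ middle steps of length $\le L/n$, and final short step of length $\le L(1-t)/n$) match exactly the three groups of terms in the definition of $R_t(f,n)$.

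Now $f=g\circ\gamma\colon[0,L]\to[0,+\infty]$ is integrable because $\int_0^L f\,\d s=\int_\gamma g<\infty$, and $f(0)=g(\alpha(\gamma))<\infty$, $f(L)=g(\omega(\gamma))<\infty$ by hypothesis. Proposition \ref{prop:doob_approximation} therefore gives
$$\int_0^1\Bigl|R_t(f,n)-\int_\gamma g\Bigr|\,\d t \longrightarrow 0 \quad \text{as } n\to\infty.$$
Extract a subsequence $n_j$ along which $R_t(f,n_j)\to\int_\gamma g$ for $t$ in a full-measure subset of $[0,1]$ (see Remark \ref{rmk:approximation_integral_a.e.t}). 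Fix any such $t$; from the termwise bound and $\lims$:
$$\lims_{j\to\infty}\int_{\sfc_{t,n_j}} g \;\le\; \lim_{j\to\infty} R_t(f,n_j) \;=\; \int_\gamma g,$$
which is the claim.

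There is no serious obstacle here. The only point worth emphasizing is that definition \eqref{eq:defin_integral_on_chains} uses the \emph{symmetric} weight $(g(q_i)+g(q_{i+1}))/2$ at the endpoints of each step rather than a one-sided value; this is precisely the form of Riemann-like sum encoded in $R_t(f,n)$, so the termwise comparison above is immediate and requires no change in the underlying convergence argument. (The possibility that $g$ equals $+\infty$ at some sampling point is harmless: for $t$ in the full-measure set we have $R_t(f,n_j)\to\int_\gamma g<\infty$, so $R_t(f,n_j)<\infty$ eventually, forcing all sampling values to be finite from some $j$ onwards and making the bound nontrivial.)
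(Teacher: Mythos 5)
Your proposal is correct and follows essentially the same route as the paper's proof: apply Proposition \ref{prop:doob_approximation} to $f=g\circ\gamma$, use Remark \ref{rmk:approximation_integral_a.e.t} to pick $t$ and a subsequence along which $R_t(f,n_j)\to\int_\gamma g$, and bound $\int_{\sfc_{t,n_j}}g\le R_t(f,n_j)$ termwise via $\sfd(\gamma(a),\gamma(b))\le\ell(\gamma\restr{[a,b]})=|b-a|$. The only cosmetic difference is that you establish the termwise inequality for all $t,n$ before extracting the subsequence, whereas the paper does it afterwards; the content is identical.
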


\begin{proof}
    We apply Proposition \ref{prop:doob_approximation} to the function $h=g \circ \gamma$, which is integrable by assumption and satisfies $h(0),h(L) < \infty$. In particular, there exists $t\in [0,1]$ and a subsequence $n_j$ such that
\begin{equation}
    \lim_{j\to +\infty} R_t(h,n_j) = \int_0^L h(s)\,\d s
\end{equation}
as noted in Remark \ref{rmk:approximation_integral_a.e.t}.
For every $j$ we compute
{\allowdisplaybreaks[4]
\begin{align}
\label{eq:from_time_to_points}
            R_t(h,n_j) &= \frac{h(0) + h\left(L\left(\frac{t}{n_j}\right)\right)}{2} \cdot L\left(\frac{t}{n_j}\right) + \sum_{i=0}^{n_j-2}\frac{h\left(L\left(\frac{t+i}{n_j}\right)\right)+h\left(L\left(\frac{t+(i+1)}{n_j}\right)\right)}{2}\cdot\frac{L}{n_j} \\
            &+ \frac{h\left(L \left(\frac{t+n_j-1}{n_j}\right)\right) + h(L)}{2} \cdot L\left(\frac{1-t}{n_j}\right)\\
            &= \frac{g(\gamma(0)) + g\left(\gamma\left(L\left(\frac{t}{n_j}\right)\right)\right)}{2} \cdot \ell\left(\gamma\restr{\left[0,L\left(\frac{t}{n_j}\right)\right]}\right) \\
            &+ \sum_{i=0}^{n_j-2}\frac{g\left(\gamma\left(L\left(\frac{t+i}{n_j}\right)\right)\right)+g\left(\gamma\left(L\left(\frac{t+(i+1)}{n_j}\right)\right)\right)}{2}\cdot\ell\left(\gamma\restr{\left[ L\left(\frac{t+i}{n_j}\right),L\left(\frac{t+(i+1)}{n_j}\right)\right]}\right) \\
            &+ \frac{g\left(\gamma\left(L \left(\frac{t+n_j-1}{n_j}\right)\right)\right) + g(\gamma(L))}{2} \cdot \ell\left(\gamma\restr{\left[L\left(\frac{1-t}{n_j}\right),1\right]}\right)\\
            &\ge \int_{\sfc_{t,n_j}} g\\
        \end{align}
}
where we used in the last inequality that $\sfd(\gamma(a),\gamma(b))\le \ell(\gamma\restr{[a,b]})$ for every $a,b \in [0,L]$.
\end{proof}

\begin{remark}
    Combining Lemma \ref{lemma:lower-semicontinuity-integral-chains-convergence} and Proposition \ref{prop:approx_integral_curve_with_subchains} we get that for every lower semicontinuous $g\colon \X \to [0,+\infty]$ there exists $t\in [0,1]$ and a subsequence $n_j$ such that
    $$\int_\gamma g = \lim_{j\to +\infty} \int_{\sfc_{t,n_j}}g.$$
\end{remark}

\begin{remark}
\label{rmk:section_2_for_lambda}
    The proofs of Lemma \ref{lemma:lower-semicontinuity-integral-chains-convergence} and Proposition \ref{prop:approx_integral_curve_with_subchains} can be adapted to the case of $\lambda$-integrals along chains, for $\lambda \in [0,1]$, using Remark \ref{rmk:section_4_for_lambda}.
\end{remark}

\section{Sobolev and $BV$ spaces à la Cheeger and Ambrosio-Gigli-Savaré}
\label{sec:Cheeger-AGS}
In this section we recall the definitions of two functionals that have been used by Cheeger (\cite{Cheeger99}) and Ambrosio-Gigli-Savaré (\cite{AmbrosioGigliSavare11-3, AmbrosioGigliSavare14}) to define Sobolev spaces, for $p>1$, and $BV$ spaces, for $p=1$, via relaxation.

Let $u\colon \X \to \R$ be a Borel function. A function $g\colon \X \to [0,+\infty]$ is an upper gradient of $u$, and we write $g\in \UG{}(u)$, if 
\begin{equation}
    \label{eq:defin_UG_curves}
    \vert u(\omega(\gamma)) - u(\alpha(\gamma))\vert \le \int_\gamma g
\end{equation}
for every rectifiable curve $\gamma$.

Cheeger considered the functional
\begin{equation}
    \label{eq:defin_F_Cheeger}
    \F{curve}\colon \calL{p}(\X) \to [0,+\infty], \quad u\mapsto \inf \left\{ \Vert g \Vert_{L^p(\X)}\,:\, g\in \UG{}(u) \right\},
\end{equation}
with the usual convention that the infimum over an empty set is $+\infty$. The relaxation of $\F{curve}$ is then
\begin{equation}
    \label{eq:defin_relaxation_curve}
    \relF{curve}(u) = \inf\left\{ \limi_{j\to +\infty}\inf_{g\in \UG{}(u_j)} \Vert g \Vert_{L^p(\X)}\, : \, u_j \underset{L^p(\X)}{\longrightarrow} u\right\}.
\end{equation}
We denote the associated Banach space defined as in \eqref{eq:defin_H_full_generality} by $(\HH{p}{curve}(\X), \Vert \cdot \Vert_{\HH{p}{curve}(\X)})$.

\begin{remark}
    If $p>1$ the space $(\HH{p}{curve}(\X), \Vert \cdot \Vert_{\HH{p}{curve}(\X)})$ is isometric to the $p$-Newtonian-Sobolev space, see \cite[Theorem 4.10]{Shanmu00}. Instead the space $(\HH{1}{curve}(\X), \Vert \cdot \Vert_{\HH{1}{curve}(\X)})$ can be used as a possible definition of the space of $BV$ functions (equivalent to other ones in literature when $(\X,\sfd)$ is complete by \cite{AmbrosioDiMarino14}), which generally strictly contains the $1$-Newtonian-Sobolev space.
\end{remark}

Ambrosio, Gigli and Savaré defined the functional
\begin{equation}
    \label{eq:defin_F_AGS}
    \F{AGS}\colon \calL{p}(\X) \to [0,+\infty], \quad u\mapsto 
    \begin{cases}
        \Vert \lip\,u \Vert_{L^p(\X)} &\text{ if }u\in \Lip(\X);\\
        +\infty &\text{ otherwise}
    \end{cases}
\end{equation}
The relaxation of $\F{AGS}$ is then
\begin{equation}
    \label{eq:defin_relaxation_AGS}
    \relF{AGS}(u) = \inf\left\{ \limi_{j\to +\infty} \Vert \lip\,u_j \Vert_{L^p(\X)}\, : \, u_j \in \Lip(\X) \text{ and } u_j \underset{L^p(\X)}{\longrightarrow} u\right\}.
\end{equation}
We denote the associated Banach space defined as in \eqref{eq:defin_H_full_generality} by $(\HH{p}{AGS}(\X), \Vert \cdot \Vert_{\HH{p}{AGS}(\X)})$.

It is known that if $u\in \Lip(\X)$ then $\lip\,u \in \UG{}(u)$ (see for instance \cite[Lemma 6.2.6]{HKST15}). This gives immediately that $\relF{AGS}(u) \ge \relF{curve}(u)$ for every $u\in L^p(\X)$. When the metric space $(\X,\sfd)$ is complete, the several proofs of density in energy of Lipschitz functions (see \cite{AmbrosioGigliSavare11-3,AmbrosioDiMarino14,Eriksson-Bique-Density-in-energy,LucicPasqualetto2024}) say that $\relF{AGS}(u) = \relF{curve}(u)$ for every $u\in L^p(\X)$. We summarize these well known results in the following proposition.
\begin{proposition}
\label{prop:AGS=curve_complete}
    Let $(\X,\sfd,\mm)$ be a metric measure space. Then $\HH{p}{AGS}(\X) \subseteq \HH{p}{curve}(\X)$ with $\Vert u \Vert_{\HH{p}{curve}(\X)} \le \Vert u \Vert_{\HH{p}{AGS}(\X)}$ for every $u \in L^p(\X)$. Moreover, if $(\X,\sfd)$ is complete then $\HH{p}{AGS}(\X) = \HH{p}{curve}(\X)$ with $\Vert u \Vert_{\HH{p}{curve}(\X)} = \Vert u \Vert_{\HH{p}{AGS}(\X)}$ for every $u \in L^p(\X)$.
\end{proposition}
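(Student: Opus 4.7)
The plan is to split the proposition into its two halves: the inclusion with norm bound, which holds in general, and the matching reverse inequality, which uses completeness. For the first half I would argue at the level of the base functionals. By \cite[Lemma 6.2.6]{HKST15}, every $u\in\Lip(\X)$ satisfies $\lip\,u\in\UG{}(u)$, so
\begin{equation*}
    \F{curve}(u)\le \|\lip\,u\|_{L^p(\X)}=\F{AGS}(u)\qquad\text{for }u\in\Lip(\X),
\end{equation*}
while $\F{curve}(u)\le +\infty=\F{AGS}(u)$ otherwise. Monotonicity of relaxation in the underlying functional, immediate from the formula \eqref{eq:defin_relaxation_full_generality}, then forces $\relF{curve}\le\relF{AGS}$ on all of $L^p(\X)$, and the definition \eqref{eq:defin_norm_H_full_generality} of the norms upgrades this to the inclusion $\HH{p}{AGS}(\X)\subseteq\HH{p}{curve}(\X)$ with $\|u\|_{\HH{p}{curve}(\X)}\le\|u\|_{\HH{p}{AGS}(\X)}$.

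For the reverse inequality under completeness of $(\X,\sfd)$, the plan is to invoke the \emph{density in energy of Lipschitz functions} in $\HH{p}{curve}(\X)$, established under completeness in several ways in \cite{AmbrosioGigliSavare11-3, AmbrosioDiMarino14, Eriksson-Bique-Density-in-energy, LucicPasqualetto2024}. Each of those references supplies, for every $u\in L^p(\X)$ with $\relF{curve}(u)<+\infty$, a sequence $\{u_j\}\subseteq\Lip(\X)$ with $u_j\to u$ in $L^p(\X)$ and $\limi_j\|\lip\,u_j\|_{L^p(\X)}\le \relF{curve}(u)$, and this is precisely the bound needed to deduce $\relF{AGS}(u)\le\relF{curve}(u)$ from the definition of $\relF{AGS}$.

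The only real content of the proposition therefore lives inside those cited density-in-energy theorems, whose proofs are not reproduced here; this is the main (and only) obstacle. From the point of view of the present paper, the most congenial reference is \cite{Eriksson-Bique-Density-in-energy}, whose chain-based approximation scheme will be extensively adapted in Section \ref{sec:proof_main_result}. In fact, Theorem \ref{theo-intro:equivalences-of-all-norms-complete-spaces} may be viewed as a simultaneous strengthening of Proposition \ref{prop:AGS=curve_complete} and a recasting of its proof in chain language, so at this stage citation is the natural choice.
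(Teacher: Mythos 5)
Your proposal matches the paper's treatment exactly: the inclusion follows from $\lip\,u\in\UG{}(u)$ (via \cite[Lemma 6.2.6]{HKST15}) and monotonicity of relaxation, and the reverse inequality under completeness is delegated to the cited density-in-energy results, which is precisely how the paper presents this proposition as a summary of known facts. No gaps.
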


The last part of the statement cannot hold without the completeness assumption. The motivation is the following: the space $\HH{p}{AGS}(\X)$ does not change if we take the completion of $\X$, while $\HH{p}{curve}(\X)$ is not preserved.
\begin{proposition}
    \label{prop:AGS_completion}
    Let $(\X,\sfd,\mm)$ be a metric measure space and let $(\bar{\X},\bar{\sfd},\bar{\mm})$ be its completion. Then the identity map $\iota\colon L^p(\X) \to L^p(\bar{\X})$ induces an isometry between $\HH{p}{AGS}(\X)$ and $\HH{p}{AGS}(\bar{\X})$.
\end{proposition}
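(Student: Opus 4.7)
The plan is to identify $L^p(\X)$ with $L^p(\bar{\X})$ via $\iota$, which is an isometry at the level of $L^p$-spaces because $\bar{\mm}(\bar{\X}\setminus \X) = \mm((\bar{\X}\setminus \X) \cap \X) = \mm(\emptyset) = 0$. It then suffices to match the two relaxation problems through the natural bijection $\Lip(\X) \ni u \longleftrightarrow \bar{u} \in \Lip(\bar{\X})$, where $\bar{u}$ is the unique continuous (equivalently, Lipschitz, with the same Lipschitz constant) extension of $u$ to the completion, and conversely the inverse map is $\bar{u} \mapsto \bar{u}|_\X$.

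The key technical point is the pointwise identity
\begin{equation*}
    \lip\,\bar{u}(x) = \lip\,u(x) \qquad \text{for every } u \in \Lip(\X) \text{ and every } x \in \X.
\end{equation*}
To prove it, I would fix $\varepsilon > 0$ and $x \in \X$. Since $B_\varepsilon^\X(x) \subseteq B_\varepsilon^{\bar{\X}}(x)$ and $\bar{u}|_\X = u$, the inequality $\sl{\varepsilon} \bar{u}(x) \ge \sl{\varepsilon} u(x)$ is immediate. For the reverse, given any $y \in B_\varepsilon^{\bar{\X}}(x)\setminus\{x\}$ I would approximate $y$ by a sequence $\{y_n\} \subseteq \X$ with $\bar{\sfd}(y_n,y) \to 0$; continuity of $\bar{u}$ together with $\bar{u}|_\X = u$ yields
\begin{equation*}
    \frac{|u(y_n)-u(x)|}{\sfd(y_n,x)} \longrightarrow \frac{|\bar{u}(y)-\bar{u}(x)|}{\bar{\sfd}(y,x)},
\end{equation*}
with $y_n \in B_\varepsilon^\X(x)$ for $n$ large. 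Taking the supremum over such $y$ gives $\sl{\varepsilon} \bar{u}(x) = \sl{\varepsilon} u(x)$, and letting $\varepsilon \to 0$ concludes the identity.

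Combined with $\bar{\mm}(\bar{\X}\setminus\X) = 0$, this identity upgrades to $\| \lip\,\bar{u}\|_{L^p(\bar{\X})} = \| \lip\,u\|_{L^p(\X)}$, so the functionals $\F{AGS}$ on $\X$ and on $\bar{\X}$ take equal values at the pair $(u,\bar{u})$. I would then transfer $L^p$-approximating sequences in both directions: in one direction an approximating sequence $u_j \in \Lip(\X)$ with $u_j \to u$ in $L^p(\X)$ extends to $\bar{u}_j \in \Lip(\bar{\X})$ with $\bar{u}_j \to \iota(u)$ in $L^p(\bar{\X})$ and the same slope $L^p$-norm; in the other direction an approximating sequence $\bar{v}_j \in \Lip(\bar{\X})$ with $\bar{v}_j \to \iota(u)$ in $L^p(\bar{\X})$ restricts to $v_j := \bar{v}_j|_\X \in \Lip(\X)$ with $v_j \to u$ in $L^p(\X)$ and, again by the pointwise identity, the same slope $L^p$-norm. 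Passing to the $\liminf$ in \eqref{eq:defin_relaxation_AGS} yields $\relF{AGS}(u) = \relF{AGS}(\iota(u))$, and adding the $L^p$-norms produces the $\HH{p}{AGS}$-isometry. The main — and really the only — obstacle is the pointwise equality of the local Lipschitz constant under extension/restriction, which hinges on density of $\X$ in $\bar{\X}$ together with continuity of the Lipschitz extension.
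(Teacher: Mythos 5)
Your proposal is correct and follows essentially the same route as the paper: extend/restrict Lipschitz functions between $\X$ and $\bar{\X}$, prove the pointwise identity $\lip\,\bar{u}=\lip\,u$ on $\X$ (the paper states the underlying equality of suprema over $B_\varepsilon(x)\cap\X$ and $B_\varepsilon(x)$ without spelling out the density argument you give), use $\bar{\mm}(\bar{\X}\setminus\X)=0$ to equate the $L^p$-norms, and transfer approximating sequences in both directions. The only cosmetic difference is that the paper packages the reverse direction as the restriction operator $r$ being linear, $1$-Lipschitz and inverse to $\iota$, which is exactly your second transfer step.
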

\begin{proof}
    Given $u \in {\rm Lip}(\X)$, there exists a unique extension $\bar{u} \in {\rm Lip}(\bar{\X})$. For every $\varepsilon >0$ and $x \in \X$, we have
    \begin{equation*}
        \sup_{y \in (B_\varepsilon(x) \cap \X) \setminus \{x\}}\frac{|u(y)-u(x)|}{\sfd(y,x)} =\sup_{y \in B_\varepsilon(x)\setminus \{x\}}\frac{|\bar{u}(y)-\bar{u}(x)|}{\bar{\sfd}(y,x)},
    \end{equation*}
    where the balls are in $(\bar{\X},\bar{\sfd})$. By denoting with a superscript the space in which the local Lipschitz constant is computed, we have that
    \begin{equation*}
        \lip^\X u(x) =\lip^{\bar{\X}}\bar{u}(x)
    \end{equation*}
    for every $x\in \X$. This in particular implies that
    \begin{equation*}
        \|\lip^\X u\|_{L^p(\X)}= \|\lip^{\bar{\X}} \bar{u}\|_{L^p(\bar{\X})}.
    \end{equation*}
    Moreover, if a sequence of functions $u_j \in \Lip(\X)$ converges to $u$ in $L^p(\X)$ then the extensions $\bar{u}_j \in \Lip(\bar{\X})$ converge to $\iota(u)$ in $L^p(\bar{\X})$ as well, since $\bar{\mm}$ is concentrated on $\X$. 
    Thus $\iota(\HH{p}{AGS}(\X)) \subseteq \HH{p}{AGS}(\bar{\X})$ and $\|\iota(u)\|_{\HH{p}{AGS}(\bar{\X})} \le \| u \|_{\HH{p}{AGS}(\X)}$. On the other hand the operator $r \colon \HH{p}{AGS}(\bar{\X}) \to \HH{p}{AGS}(\X)$ induced by the restriction from $\bar{\X}$ to $\X$ is linear, $1$-Lipschitz and satisfies $r \circ \iota = \iota \circ r = \text{id}$, thus concluding the proof.
\end{proof}
\begin{example}
\label{ex:AGS_different_curve}
    In the simple example of $\X := \R \setminus \Q$ endowed with the Euclidean distance and the Lebesgue measure, we have $\HH{p}{AGS}(\X) \neq \HH{p}{curve}(\X)$. Indeed from one side we have, by Proposition \ref{prop:AGS_completion}, that $\HH{p}{AGS}(\X) \cong \HH{p}{AGS}(\R)$, and the latter is the classical Sobolev space on $\R$ for $p>1$ and the classical space of functions with bounded variations on $\R$ for $p=1$, while $\HH{p}{curve}(\X) \cong L^p(\X) \cong L^p(\R)$ since there are no nonconstant curves in $\X$ and so the constant function $0$ is an upper gradient of every $L^p(\X)$ function.
\end{example}
 
\begin{remark}
    Example \ref{ex:AGS_different_curve} shows that $\HH{p}{curve}(\X) \neq \HH{p}{curve}(\bar{\X})$ in general. The two spaces are the same if for instance the $p$-capacity of $\bar{\X}\setminus \X$, namely ${\rm Cap}_p(\bar{\X}\setminus \X)$, is zero. For the definition of $p$-capacity we refer to \cite[Chapter 7]{HKST15}. The following is a (non-exhaustive) list of papers studying sufficient conditions that ensure that $\HH{p}{curve}(\X) = \HH{p}{curve}(\bar{\X})$: \cite{HKST15,Koskela99,KoskelaShanmuTuominen2000,Lahti2023}. They are expressed in terms of capacity or porosity-type conditions.
\end{remark}

\section{Chain upper gradients}
\label{sec:chain_upper_gradients}
The goal of the next sections is to recover a description of the space $\HH{p}{AGS}(\X)$ in the sense of Proposition \ref{prop:AGS=curve_complete}, even when $\X$ is not complete. This is possible if we replace upper gradients with chain upper gradients.

Let $u\colon \X \to \R$ and $\varepsilon > 0$. A Borel function $g \colon \X \to [0,+\infty]$ is a $\varepsilon$-upper gradient of $u$, and we write $g\in \UG{\varepsilon}(u)$, if for all $\sfc \in \Chain{\varepsilon}$ it holds
\begin{equation}
    \label{eq:defin_epsilon_upper_gradient}
    |u(\omega(\sfc)) - u(\alpha(\sfc))| \le \int_\mathsf{c} g.
\end{equation}
The definition of $\varepsilon$-upper gradient is very sensitive to the value of the function at every point. Sometimes it is preferable to impose some regularity on the function. With this in mind we consider the class of Lipschitz $\varepsilon$-upper gradients of $u$, namely $\LUG{\varepsilon}(u) := \UG{\varepsilon}(u) \cap \Lip(\X)$.
\begin{remark}
    \label{rmk:lambda_upper_gradient}
    Let $u\colon \X \to \R$, $\varepsilon > 0$ and $\lambda \in [0,1]$. A Borel function $g \colon \X \to [0,+\infty]$ is a $(\varepsilon, \lambda)$-upper gradient of $u$, and we write $g\in \UG{\varepsilon,\lambda}(u)$, if for all $\sfc \in \Chain{\varepsilon}$ it holds
\begin{equation}
\label{eq:defin_epsilon_upper_gradient_lambda}
   u(\omega(\sfc)) - u(\alpha(\sfc)) \le \leftindex^\lambda{\int}_\mathsf{c} g.
\end{equation}
In the symmetric case, i.e. when $\lambda = \frac12$, this is equivalent to \eqref{eq:defin_epsilon_upper_gradient}. We also set $\LUG{\varepsilon,\lambda}(u) := \UG{\varepsilon,\lambda}(u) \cap \Lip(\X)$.
\end{remark}
\begin{remark}
\label{rem:integral_along_chains_of_length_2}
    The $\varepsilon$-upper gradient condition can be tested on nonconstant chains with two elements. Namely, a function $g\colon \X \to [0,+\infty]$ is a $\varepsilon$-upper gradient of $u\colon \X \to \R$ if and only if for every chain $\{x,y\}$, $x,y\in \X$, $x\neq y$ with $\sfd(x,y) \le \varepsilon$, it holds that
    $$\vert u(x) - u(y) \vert \le \frac{g(x)+g(y)}{2}\sfd(x,y) = \int_{\{x,y\}}g.$$
    One implication is obvious. For the other one we fix a chain $\sfc = \{q_i\}_{i=0}^N$ and we compute
    $$\vert u(q_N) - u(q_0) \vert \le \sum_{i=0}^{N-1} \vert u(q_i) - u(q_{i+1}) \vert \le \sum_{i=0}^{N-1} \int_{\{q_i,q_{i+1}\}} g = \int_\sfc g.$$

    A similar conclusion holds for $(\varepsilon,\lambda)$-upper gradients, for every $\lambda \in [0,1]$.
\end{remark}
The next lemma shows that the slope at level $\varepsilon$ is always a $\varepsilon$-upper gradient. On the other hand, the local Lipschitz constant is smaller than every upper semicontinuous (in particular every Lipschitz) $\varepsilon$-upper gradient.
\begin{lemma}
\label{lemma:slopeeps_epsug}
    Let $(\X,\sfd)$ be a metric space and let $u\colon \X \to \R$. Then $\sl{\varepsilon} u \in \UG{\varepsilon}(u)$ for every $\varepsilon > 0$. Moreover, for every $g\in \UG{\varepsilon}(u)$ it holds $\sl{\varepsilon'} u(x) \leq \sup_{B_{\varepsilon'}(x)} g$ for every $\varepsilon' \le \varepsilon$. Finally, if $g\in \UG{\varepsilon}(u)$ is upper semicontinuous then $\lip\, u \le g$.
\end{lemma}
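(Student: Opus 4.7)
The plan is to verify each of the three statements via Remark \ref{rem:integral_along_chains_of_length_2}, which lets us test the $\varepsilon$-upper gradient property only on two-point chains.

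\textbf{First claim.} To show $\sl{\varepsilon} u \in \UG{\varepsilon}(u)$, by Remark \ref{rem:integral_along_chains_of_length_2} it is enough to check the defining inequality on every chain of the form $\{x,y\}$ with $\sfd(x,y)\le \varepsilon$. By the definition of slope at scale $\varepsilon$, since $y\in B_\varepsilon(x)$ we have $|u(y)-u(x)| \le \sl{\varepsilon} u(x)\cdot \sfd(x,y)$, and symmetrically $|u(y)-u(x)| \le \sl{\varepsilon} u(y)\cdot \sfd(x,y)$. Averaging these two estimates yields
\begin{equation*}
    |u(y)-u(x)| \le \frac{\sl{\varepsilon} u(x)+\sl{\varepsilon} u(y)}{2}\,\sfd(x,y) = \int_{\{x,y\}}\sl{\varepsilon} u,
\end{equation*}
as required.

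\textbf{Second claim.} Fix $g\in \UG{\varepsilon}(u)$, $\varepsilon'\le \varepsilon$ and $x\in \X$. For any $y\in B_{\varepsilon'}(x)\setminus\{x\}$, we have $\sfd(x,y)<\varepsilon'\le \varepsilon$, so $\{x,y\}\in \Chain{\varepsilon}$. Applying the $\varepsilon$-upper gradient inequality to this chain and bounding $g(x),g(y)\le \sup_{B_{\varepsilon'}(x)}g$ gives
\begin{equation*}
    \frac{|u(y)-u(x)|}{\sfd(x,y)} \le \frac{g(x)+g(y)}{2} \le \sup_{B_{\varepsilon'}(x)} g.
\end{equation*}
Taking the supremum over $y\in B_{\varepsilon'}(x)\setminus\{x\}$ yields $\sl{\varepsilon'} u(x)\le \sup_{B_{\varepsilon'}(x)} g$.

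\textbf{Third claim.} Suppose $g\in \UG{\varepsilon}(u)$ is upper semicontinuous. By the second claim applied for every $\varepsilon'\le \varepsilon$, and taking the infimum in $\varepsilon'$, we get
\begin{equation*}
    \lip\, u(x) = \inf_{0<\varepsilon'\le \varepsilon} \sl{\varepsilon'} u(x) \le \inf_{0<\varepsilon'\le \varepsilon} \sup_{B_{\varepsilon'}(x)} g = \limsup_{y\to x} g(y) \le g(x),
\end{equation*}
where the last inequality uses the upper semicontinuity of $g$. No step is really an obstacle: the only mild care needed is to average the two one-sided slope bounds in the first claim (since the integral in \eqref{eq:defin_integral_on_chains} is symmetric), and to invoke upper semicontinuity at the very end to pass from the $\sup$ on small balls down to $g(x)$.
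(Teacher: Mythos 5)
Your proof is correct and follows essentially the same route as the paper's: test the upper gradient property on two-point chains via Remark \ref{rem:integral_along_chains_of_length_2}, average the two one-sided slope bounds for the first claim, and use upper semicontinuity to pass from $\sup_{B_{\varepsilon'}(x)}g$ to $g(x)$ for the third. The only cosmetic remark is that $\inf_{\varepsilon'}\sup_{B_{\varepsilon'}(x)}g$ equals $\max\{g(x),\limsup_{y\to x}g(y)\}$ rather than the limsup alone, but this does not affect the conclusion.
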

\begin{proof}
    By Remark \ref{rem:integral_along_chains_of_length_2} it suffices to consider $\sfc = \{q_0,q_1\}$. Then
    \begin{equation}
    \begin{aligned}
         |u(q_{1}) - u(q_{0})| &= \frac{1}{2}\left( \frac{\vert u(q_{1}) - u(q_{0}) \vert}{\sfd(q_0,q_{1})} + \frac{\vert u(q_{1}) - u(q_{0}) \vert}{\sfd(q_0,q_{1})} \right)\sfd(q_0,q_{1})\\
        &\leq \frac{ \sl{\varepsilon} u(q_0) + \sl{\varepsilon} u(q_{1})}{2}\sfd(q_0,q_{1}) = \int_{\mathsf{c}} \sl{\varepsilon} u.
    \end{aligned}
    \end{equation}
    This proves the first part of the statement. 
    
    We move to the second part. Fix $x \in \X$ and consider $y\in \X$ such that $\sfd(x,y) \le \varepsilon'$ with $\varepsilon' \le \varepsilon$. Since $g\in \UG{\varepsilon}(u)$ and $\{x,y\} \in \Chain{\varepsilon}$, we have
    \begin{equation*}
        |u(x)-u(y)| \le \sfd(x,y)\,\frac{g(x)+g(y)}{2} \le \sfd(x,y)\,\sup_{B_\varepsilon(x)}g.
    \end{equation*}
    By taking the supremum over $y \in B_{\varepsilon'}(x)$ the second conclusion follows. Taking the limit as $\varepsilon' \to 0$ and using the uppersemicontinuity of $g$, we conclude also the third part.
\end{proof}
On the other hand, every $\varepsilon$-upper gradient $g$ satisfies the upper gradient inequality along every curve with endpoints in the set $\{g < \infty\}$. 
%
%
\begin{proposition}
\label{prop:UGeta_subset_UG}
    Let $(\X,\sfd)$ be a metric space and let $u\colon \X \to \R$. If $g\in \UG{\varepsilon}(u)$ for some $\varepsilon >0$, then 
    \begin{equation}
    \label{eq:UG_inequality}
        \vert u(\omega(\gamma)) - u(\alpha(\gamma)) \vert \le \int_\gamma g
    \end{equation}
    for every rectifiable curve $\gamma$ such that $\omega(\gamma), \alpha(\gamma) \in \{g < +\infty \}$. In particular if $g$ takes only finite values then $g \in \UG{}(u)$.
\end{proposition}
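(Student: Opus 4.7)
The plan is to approximate any rectifiable curve $\gamma$ by a sequence of chains of mesh tending to zero and with the same endpoints as $\gamma$, then to apply the $\varepsilon$-upper gradient inequality to each such chain once the mesh drops below $\varepsilon$ and pass to the limsup. Proposition \ref{prop:approx_integral_curve_with_subchains} is exactly tailored to provide this discretization, since it controls $\int_\gamma g$ from above by the limsup of the chain integrals along a suitable subsequence.

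First I would dispose of the trivial case $\int_\gamma g = +\infty$, for which \eqref{eq:UG_inequality} holds vacuously. Otherwise, I would reparametrize $\gamma$ by arc-length as $\gamma \colon [0, L] \to \X$ with $L = \ell(\gamma)$; this preserves both the integral and the endpoints, so in particular $g(\gamma(0)), g(\gamma(L)) < +\infty$ and $\int_\gamma g < +\infty$, which are precisely the hypotheses of Proposition \ref{prop:approx_integral_curve_with_subchains}.

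Applying that proposition yields $t \in [0,1]$ and a subsequence $n_j \to +\infty$ with
$$\int_\gamma g \ge \limsup_{j \to +\infty} \int_{\sfc_{t, n_j}} g,$$
where $\sfc_{t, n}$ is the sampled chain defined in the statement of that proposition. Since $\gamma$ is arc-length parametrized, consecutive vertices of $\sfc_{t, n}$ are at distance at most $L/n$, so $\sfc_{t, n} \in \Chain{L/n}$. For $j$ large enough that $L/n_j \le \varepsilon$ we therefore have $\sfc_{t, n_j} \in \Chain{\varepsilon}$, and by construction $\alpha(\sfc_{t, n_j}) = \gamma(0) = \alpha(\gamma)$ and $\omega(\sfc_{t, n_j}) = \gamma(L) = \omega(\gamma)$. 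The assumption $g \in \UG{\varepsilon}(u)$ then gives
$$|u(\omega(\gamma)) - u(\alpha(\gamma))| = |u(\omega(\sfc_{t, n_j})) - u(\alpha(\sfc_{t, n_j}))| \le \int_{\sfc_{t, n_j}} g$$
for all sufficiently large $j$, and taking the limsup yields \eqref{eq:UG_inequality}.

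The final assertion is immediate: if $g$ is finite everywhere, then every rectifiable curve $\gamma$ satisfies $\alpha(\gamma), \omega(\gamma) \in \{g < +\infty\}$, and the first part gives $g \in \UG{}(u)$. I do not expect any serious obstacle: once Proposition \ref{prop:approx_integral_curve_with_subchains} is in hand, the only delicate point is the bookkeeping that ensures $\sfc_{t, n_j} \in \Chain{\varepsilon}$ for $j$ large, which follows directly from the arc-length parametrization.
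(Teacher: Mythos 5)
Your proof is correct and follows essentially the same route as the paper's: both rest entirely on Proposition \ref{prop:approx_integral_curve_with_subchains} to discretize $\gamma$ into $\frac{L}{n_j}$-chains with the same endpoints and then invoke the $\varepsilon$-upper gradient inequality once $\frac{L}{n_j}\le\varepsilon$. The only difference is that you argue directly (with the harmless extra case $\int_\gamma g=+\infty$) while the paper phrases it as a contradiction.
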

\begin{remark}
The inequality \eqref{eq:UG_inequality} does not hold in general for curves whose endpoints belong to the set $\{g = +\infty\}$.
Indeed let us consider $u=\chi_\Q$, i.e.\ the characteristic function of the set of rational numbers in $\R$. The function $g\colon \R \to [0,+\infty]$, $g(x) = +\infty$ if $x\in \Q$ and $g(x) = 0$ otherwise, belongs to $\UG{\varepsilon}(u)$ for every $\varepsilon > 0$. However it does not belong to $\UG{}(u)$ since $\int_\gamma g = 0$ for every rectifiable curve $\gamma$ of $\R$.
\end{remark}
\begin{proof}[Proof of Proposition \ref{prop:UGeta_subset_UG}] 

Assume by contradiction that there exists a rectifiable curve, that we can assume parametrized by arc length $\gamma\colon [0,L] \to \X$, with $L:=\ell(\gamma)$, such that 
\begin{equation}
\label{eq:g-is-integrable}
    \int_0^L (g\circ \gamma)(s)\,\d s = \int_\gamma g < |u(\omega(\gamma))-u(\alpha(\gamma))| < \infty
\end{equation}
and $g(\alpha(\gamma)), g(\omega(\gamma)) < \infty$.
By Proposition \ref{prop:approx_integral_curve_with_subchains} we can find a subsequence $n_j$ such that 
$$\int_\gamma g \ge \limi_{j\to +\infty} \int_{\sfc_{t,n_j}} g.$$
This, together with \eqref{eq:g-is-integrable}, implies the existence of a $\frac{L}{n_j}$-chain $\sfc_{t,n_j}$ with same endpoints of $\gamma$ such that 
$$\int_{\sfc_{t,n_j}} g < |u(\omega(\gamma))-u(\alpha(\gamma))| = \vert u(\alpha(\sfc_{t,n_j})) - u(\omega(\sfc_{t,n_j}))\vert.$$
This proves that $g \notin \UG{\frac{L}{n_j}}(u)$, which is a contradiction.
\end{proof}

\begin{remark}
\label{rmk:section_4_for_lambda_true}
    The results of this section remain true if we consider the $\lambda$-integral and $(\varepsilon,\lambda)$-upper gradients, for every $\lambda \in [0,1]$. The proof of Proposition \ref{prop:UGeta_subset_UG} follows by Remark \ref{rmk:section_2_for_lambda}.
\end{remark}

\section{$p$-weak $\varepsilon$-upper gradients}

\label{sec:weak_eps_upper_gradient}
In the classical theory of Sobolev spaces, one weakens the definition of upper gradients along curves by requiring that \eqref{eq:defin_UG_curves} holds for $\Mod_p$-almost every curve. The definition of the outer measure $\Mod_p$ will be recalled in Section \ref{sec:PI}. In this section we will give a similar definition for chain upper gradients and we will show similarities and differences with the classical setting of curves.

Let $(\X,\sfd,\mm)$ be a metric measure space.
Let $\varepsilon > 0$ and $p\ge 1$. The $(\varepsilon, p)$-modulus of a family of chains $\sfC \subseteq \Chain{}$ is
\begin{equation*}
    \cModepsilon{\varepsilon}{p}({\sf C}):=\inf \left\{ \int \rho^p\,\d\mm:\, \rho \in {\rm Adm}^{\varepsilon}({\sfC})\right\}
\end{equation*}
where ${\rm Adm}^{\varepsilon}({\sfC})=\left\{ \rho \ge 0\,:\, \rho \text{ Borel, }\int_{\sfc} \rho \ge 1 \text{ for every } \sfc \in {\sfC} \cap \Chain{\varepsilon} \right\}$.
\begin{proposition}
\label{prop:outer_measure}
    Let $\varepsilon > 0$ and $p\ge 1$. Then $\cModepsilon{\varepsilon}{p}$ is an outer measure.
\end{proposition}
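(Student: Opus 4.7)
The plan is to verify the three defining properties of an outer measure: vanishing on the empty set, monotonicity, and countable subadditivity. The first two are formal consequences of the definition via infimum over an admissible class, so the real content is countable subadditivity.

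For $\cModepsilon{\varepsilon}{p}(\emptyset) = 0$, I would observe that the admissibility condition $\int_\sfc \rho \ge 1$ is vacuous when there are no chains to test it on, so the zero function lies in $\mathrm{Adm}^\varepsilon(\emptyset)$. For monotonicity, if $\sfC_1 \subseteq \sfC_2$ then $\mathrm{Adm}^\varepsilon(\sfC_2) \subseteq \mathrm{Adm}^\varepsilon(\sfC_1)$ (the larger family imposes more constraints), hence the infimum over the first is no larger than that over the second.

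For countable subadditivity, let $\{\sfC_n\}_{n \in \N}$ be a sequence of families of chains and set $\sfC = \bigcup_n \sfC_n$. I may assume $\cModepsilon{\varepsilon}{p}(\sfC_n) < \infty$ for every $n$, otherwise the inequality is trivial. Fix $\delta > 0$ and, for each $n$, pick $\rho_n \in \mathrm{Adm}^\varepsilon(\sfC_n)$ with
\begin{equation*}
    \int \rho_n^p \,\d\mm \le \cModepsilon{\varepsilon}{p}(\sfC_n) + \delta 2^{-n}.
\end{equation*}
Following the classical argument for curve modulus, I would set $\rho := \bigl(\sum_n \rho_n^p\bigr)^{1/p}$, which is Borel and nonnegative. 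Since $\rho \ge \rho_n$ pointwise for each $n$ and the integral along a chain (as defined in \eqref{eq:defin_integral_on_chains}) is a positive linear combination of values of the integrand, monotonicity of the chain integral gives $\int_\sfc \rho \ge \int_\sfc \rho_n \ge 1$ for any $\sfc \in \sfC \cap \Chain{\varepsilon}$, picking $n$ so that $\sfc \in \sfC_n$. Hence $\rho \in \mathrm{Adm}^\varepsilon(\sfC)$, and by monotone convergence
\begin{equation*}
    \cModepsilon{\varepsilon}{p}(\sfC) \le \int \rho^p \,\d\mm = \sum_n \int \rho_n^p \,\d\mm \le \sum_n \cModepsilon{\varepsilon}{p}(\sfC_n) + \delta.
\end{equation*}
Letting $\delta \to 0$ yields the countable subadditivity and completes the proof.

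No step is really an obstacle here; the only mild point of care is ensuring that the pointwise inequality $\rho \ge \rho_n$ transfers to chain integrals, but this is immediate from the definition \eqref{eq:defin_integral_on_chains} as a nonnegative combination of point evaluations. The same argument would work verbatim for the $\lambda$-variant of the chain integral in \eqref{eq:defin_integral_on_chains_lambda}.
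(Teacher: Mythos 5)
Your proof is correct and follows essentially the same route as the paper's: the nontrivial point is countable subadditivity, handled by choosing near-optimal admissible functions $\rho_n$, forming $\rho = \bigl(\sum_n \rho_n^p\bigr)^{1/p}$, and using $\rho \ge \rho_n$ pointwise to verify admissibility for the union. The only cosmetic difference is your explicit choice of error terms $\delta 2^{-n}$ where the paper takes an arbitrary summable sequence $\{\eta_i\}$ with $\sum_i \eta_i \le \delta$.
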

\begin{proof}
    The only non trivial property to be proven is $\cModepsilon{\varepsilon}{p}(\bigcup_{i=1}^\infty \sfC_i)\le \sum_{i=1}^\infty \cModepsilon{\varepsilon}{p}(\sfC_i)$ for $\{\sfC_i\}_{i=1}^\infty \subseteq \Chain{}$. We assume that the right hand side is finite, otherwise there is nothing to prove. 
    We fix $\delta>0$ and we choose $\{\eta_i\}_i$ such that $\sum_{i}\eta_i \le \delta$.
    We take $\rho_i \in {\rm Adm}^{\varepsilon}(\sfC_i)$ such that 
    $\int \rho_i^p \,\d \mm \le \cModepsilon{\varepsilon}{p}(\sfC_i) + \eta_i$.
    The function $\rho:=(\sum_{i=1}^\infty (\rho_i)^p)^{\frac{1}{p}}$ satisfies $\rho \in {\rm Adm}^{\varepsilon}(\bigcup_{i=1}^\infty \sfC_i)$. Therefore
    \begin{equation*}
        \cModepsilon{\varepsilon}{p}\left(\bigcup_{i=1}^\infty \sfC_i\right)\le \int \rho^p\,\d \mm \le \sum_{i=1}^\infty \int \rho_i^p\,\d \mm \le \sum_{i=1}^\infty \cModepsilon{\varepsilon}{p}(\sfC_i) +\delta.
    \end{equation*}
    Taking the limit as $\delta$ converges to $0$, we get the thesis.
\end{proof}
\begin{remark}
    The $(\varepsilon, p)$-modulus is concentrated on $\varepsilon$-chains in the following sense.
    Let $\sfC \subseteq \Chain{}$ and let $\sfC^\varepsilon := \sfC \cap \Chain{\varepsilon}$. Then $\cModepsilon{\varepsilon}{p}(\sfC \setminus \sfC^\varepsilon) = 0$ since the function $\rho=0$ belongs to ${\rm Adm}^{\varepsilon}({\sfC}\setminus {\sfC}^\varepsilon)$.
\end{remark}
    Let $(\X,\sfd,\mm)$ be a metric measure space. Given a function $u \colon \X \to \mathbb{R}$, we say that a Borel function $g \colon \X \to [0,+\infty]$ is a $p$-weak $\varepsilon$-upper gradient of $u$ and we write $g \in \WUG{p}{\varepsilon}(u)$ if
    \begin{equation*}
        \vert u(\omega(\sfc))-u(\alpha(\sfc)) \vert \le \int_{\sfc} g \qquad \text{ for }\cModepsilon{\varepsilon}{p}\text{-a.e. chain.}
    \end{equation*}
In particular if $g \in \UG{\varepsilon}(u)$ then $g\in \WUG{p}{\varepsilon}(u)$ for every $p\ge 1$.
\begin{remark}
    For $\lambda \in [0,1]$ one can define the $(\varepsilon, \lambda, p)$-modulus of a family of chains $\sfC \subseteq \Chain{}$ by
\begin{equation*}
    \cModepsilon{\varepsilon,\lambda}{p}({\sf C}):=\inf \left\{ \int \rho^p\,\d\mm:\, \rho \in {\rm Adm}^{\varepsilon, \lambda}({\sfC})\right\}
\end{equation*}
where ${\rm Adm}^{\varepsilon, \lambda}({\sfC})=\left\{ \rho \ge 0\,:\, \rho \text{ Borel, }\leftindex^\lambda{\int}_{\sfc} \rho \ge 1 \text{ for every } \sfc \in {\sfC} \cap \Chain{\varepsilon} \right\}$. This is still an outer measure which is concentrated on $\varepsilon$-chains. A Borel function $g \colon \X \to [0,+\infty]$ is a $p$-weak $(\varepsilon, \lambda)$-upper gradient of $u\colon \X \to \R$ if
    \begin{equation*}
        u(\omega(\sfc))-u(\alpha(\sfc)) \le \leftindex^\lambda{\int}_{\sfc} g \qquad \text{ for }\cModepsilon{\varepsilon,\lambda}{p}\text{-a.e. chain.}
    \end{equation*}
The set of $p$-weak $(\varepsilon, \lambda)$-upper gradient of $u$ is denoted by $\WUG{p}{\varepsilon, \lambda}(u)$. It holds $\UG{\varepsilon, \lambda}(u) \subseteq \WUG{p}{\varepsilon, \lambda}(u)$.
\end{remark}
The set of $p$-integrable $p$-weak $\varepsilon$-upper gradients is closed under $L^p(\X)$-convergence. This can be seen as a consequence of an appropriate version of  Fuglede's Lemma in this context.
\begin{proposition}[Fuglede's lemma for chains]
\label{prop:Fuglede's-lemma}
    Let $g_j$ be a sequence of Borel functions that converges in $L^p(\X)$. Then there is a subsequence $g_{j_k}$ with the following property: if $g$ is any Borel representative of the $L^p(\X)$-limit of $g_j$ then
    $$\lim_{k\to +\infty} \int_{\sfc} \vert g_{j_k} - g\vert = 0$$
    for $\cModepsilon{\varepsilon}{p}$-a.e. chain.
\end{proposition}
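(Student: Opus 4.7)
The plan is to mimic the classical proof of Fuglede's lemma, with the notion of modulus along curves replaced by the outer measure $\cModepsilon{\varepsilon}{p}$ introduced in this section. The key inputs are the countable subadditivity of $\cModepsilon{\varepsilon}{p}$ established in Proposition \ref{prop:outer_measure} and the linearity of the integral $\int_\sfc \cdot$ along a fixed chain.

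First, using that $\{g_j\}$ is $L^p(\X)$-Cauchy, I extract a subsequence $g_{j_k}$ such that
\begin{equation*}
    \Vert g_{j_k} - g \Vert_{L^p(\X)} \le 2^{-k} \qquad \text{for every } k \in \N,
\end{equation*}
where $g$ is any fixed Borel representative of the $L^p(\X)$-limit. Set $h_k := \vert g_{j_k} - g \vert$, which is Borel and satisfies $\Vert h_k \Vert_{L^p(\X)} \le 2^{-k}$.

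Next I introduce, for each $k$, the family of \emph{bad} chains
\begin{equation*}
    \sfC_k := \Bigl\{ \sfc \in \Chain{} \,:\, \int_\sfc h_k \ge 2^{-k/2} \Bigr\}.
\end{equation*}
For every $\sfc \in \sfC_k \cap \Chain{\varepsilon}$ the function $\rho_k := 2^{k/2} h_k$ satisfies $\int_\sfc \rho_k \ge 1$, so $\rho_k \in \text{Adm}^\varepsilon(\sfC_k)$ and consequently
\begin{equation*}
    \cModepsilon{\varepsilon}{p}(\sfC_k) \le \int \rho_k^{p}\, \d\mm = 2^{kp/2} \Vert h_k \Vert_{L^p(\X)}^p \le 2^{-kp/2}.
\end{equation*}
In particular $\sum_{k} \cModepsilon{\varepsilon}{p}(\sfC_k) < \infty$. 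Using the countable subadditivity of the outer measure $\cModepsilon{\varepsilon}{p}$ from Proposition \ref{prop:outer_measure}, the set
\begin{equation*}
    \sfC_\infty := \bigcap_{n \in \N} \bigcup_{k \ge n} \sfC_k
\end{equation*}
satisfies $\cModepsilon{\varepsilon}{p}(\sfC_\infty) \le \inf_{n} \sum_{k \ge n} \cModepsilon{\varepsilon}{p}(\sfC_k) = 0$.

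Finally, if $\sfc \notin \sfC_\infty$ then there exists $n = n(\sfc)$ such that $\sfc \notin \sfC_k$ for every $k \ge n$, which means $\int_\sfc h_k < 2^{-k/2}$ for every $k \ge n$; hence $\int_\sfc \vert g_{j_k} - g \vert \to 0$ as $k \to \infty$. Combining this with $\cModepsilon{\varepsilon}{p}(\sfC_\infty) = 0$ gives the conclusion. I do not expect any serious obstacle: the only subtle point is that the estimate on $\cModepsilon{\varepsilon}{p}(\sfC_k)$ only controls chains in $\sfC_k \cap \Chain{\varepsilon}$, but this is exactly what is needed since (as observed in the remark following Proposition \ref{prop:outer_measure}) the outer measure $\cModepsilon{\varepsilon}{p}$ is automatically concentrated on $\Chain{\varepsilon}$, so chains in $\sfC_k \setminus \Chain{\varepsilon}$ do not contribute.
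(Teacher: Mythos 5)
Your proof is correct, but it takes a genuinely different route from the paper. You run the classical Fuglede argument: extract a subsequence with $\Vert g_{j_k}-g\Vert_{L^p(\X)}\le 2^{-k}$, define the bad families $\sfC_k$ of chains where $\int_\sfc |g_{j_k}-g|\ge 2^{-k/2}$, bound $\cModepsilon{\varepsilon}{p}(\sfC_k)\le 2^{-kp/2}$ via the admissible function $2^{k/2}|g_{j_k}-g|$, and conclude by countable subadditivity (Proposition \ref{prop:outer_measure}) and a Borel--Cantelli argument. All steps check out, including your observation that admissibility only needs to be tested on $\sfC_k\cap\Chain{\varepsilon}$ because the modulus is concentrated there; and since the argument works for an arbitrary fixed Borel representative $g$ (the subsequence depends only on $L^p$-norms), the ``any representative'' clause is covered. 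The paper instead exploits the discrete nature of chains: it passes to a subsequence converging pointwise $\mm$-a.e.\ outside a null set $E$, invokes Lemma \ref{lemma:measure_zero_implies_modulezero} to get $\cModepsilon{\varepsilon}{p}(\Chain{}(E))=0$, and then notes that for any chain avoiding $E$ the integral $\int_\sfc|g_{j_k}-g|$ is a \emph{finite sum} of terms each tending to zero. The paper's route is shorter and shows why the chain setting is ``easier'' than the curve setting (this is precisely the point of the surrounding discussion); your route is the more robust one that would survive verbatim in settings where the integral along a single object is not a finite sum, and it does not need Lemma \ref{lemma:measure_zero_implies_modulezero} at all.
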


In the chain case this result is easier to prove and it is a consequence of the following easy but important fact. 
\begin{lemma}
\label{lemma:measure_zero_implies_modulezero}
    Let $E\subseteq \X$, $\varepsilon > 0$, $p\ge 1$. If $\mm(E) = 0$ then $\cModepsilon{\varepsilon}{p}(\Chain{}(E)) = 0$.
\end{lemma}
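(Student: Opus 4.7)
The plan is to exhibit a single admissible function of zero $L^p$-norm. Since $\mm$ is Radon, hence Borel regular, any set of $\mm$-outer measure zero is contained in a Borel set of measure zero; so one may pick a Borel $F\supseteq E$ with $\mm(F)=0$. Because $\Chain{}(E)\subseteq \Chain{}(F)$ and $\cModepsilon{\varepsilon}{p}$ is monotone by Proposition \ref{prop:outer_measure}, it suffices to prove $\cModepsilon{\varepsilon}{p}(\Chain{}(F))=0$.

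For this, define $\rho\colon\X\to[0,+\infty]$ by $\rho:=(+\infty)\chi_F$, i.e.\ $\rho=+\infty$ on $F$ and $\rho=0$ on $\X\setminus F$. With the convention $(+\infty)\cdot 0=0$ one has $\int_\X \rho^p\,\d\mm=0$, so it remains only to check that $\rho\in {\rm Adm}^{\varepsilon}(\Chain{}(F))$. Fix any nontrivial $\sfc=\{q_i\}_{i=0}^N\in\Chain{}(F)\cap\Chain{\varepsilon}$; by the defining condition of $\Chain{}(F)$ there is some index $i_0$ with $q_{i_0}\in F$, so $\rho(q_{i_0})=+\infty$. Since $\sfc$ has positive length, one can find an adjacent index $i_1\in\{i_0-1,i_0+1\}$ for which $\sfd(q_{i_0},q_{i_1})>0$; the corresponding summand in $\int_\sfc \rho$ is then $\tfrac{\rho(q_{i_0})+\rho(q_{i_1})}{2}\sfd(q_{i_0},q_{i_1})=+\infty$, so $\int_\sfc\rho=+\infty\ge 1$ and admissibility follows. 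We conclude $\cModepsilon{\varepsilon}{p}(\Chain{}(F))\le\int_\X\rho^p\,\d\mm=0$, as desired.

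There is essentially no obstacle once the admissible function is identified; the only minor subtlety is that constant (zero-length) chains through points of $F$ cannot be covered by any $\rho$ supported on $F$, but such chains satisfy $\alpha(\sfc)=\omega(\sfc)$ and are irrelevant in every downstream application (notably Fuglede's lemma, Proposition \ref{prop:Fuglede's-lemma}), so they may be safely excluded from the admissibility test, or else handled by the convention $(+\infty)\cdot 0=+\infty$ on the chain-integral side. In either reading the argument is a one-line construction of $\rho$.
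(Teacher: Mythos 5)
Your proof is correct, but it takes a genuinely different route from the paper. The paper decomposes $\Chain{}(E)=\bigcup_{k}\sfC_k$ where $\sfC_k$ consists of the chains in $\Chain{}(E)$ whose consecutive points are all at distance at least $\frac1k$, observes that the \emph{finite} function $2k\chi_E$ is admissible for $\sfC_k$, and concludes by the countable subadditivity of $\cModepsilon{\varepsilon}{p}$ established in Proposition \ref{prop:outer_measure}. You instead produce a single extended-valued admissible function $+\infty\cdot\chi_F$. What your route buys: no decomposition and no appeal to subadditivity, and your function is admissible for \emph{every} positive-length chain meeting $F$, including chains with some (but not all) coincident consecutive points, which strictly speaking fall outside every $\sfC_k$; your preliminary enlargement of $E$ to a Borel null set $F$ is also a care point the paper glosses over, since admissible functions are required to be Borel while $\chi_E$ need not be for an arbitrary $\mm$-null $E$. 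What the paper's route buys: it stays within finite-valued admissible functions and avoids the $(+\infty)\cdot 0$ conventions entirely. Two small remarks. First, the index $i_0$ furnished by the definition of $\Chain{}(F)$ need not itself admit a neighbour at positive distance (e.g.\ $\sfc=\{a,a,b\}$ with $i_0=0$); but since $\sfd(q_{i_0},q_{i_1})=0$ forces $q_{i_1}=q_{i_0}\in F$, the set of indices landing in $F$ absorbs its zero-distance neighbours, so a positive-length chain meeting $F$ always contains an edge of positive length with an endpoint in $F$ --- a one-line repair, not a gap. Second, the zero-length chains you flag are indeed a genuine degenerate case, but the paper's own proof silently omits them too (they belong to no $\sfC_k$), so this caveat is shared rather than a defect of your argument.
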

\begin{proof}
    We write
    $\Chain{}(E) = \bigcup_{k\in \N} \sfC_k$, where
    $$\sfC_k := \left\{ \sfc = \{q_i\}_{i=0}^N \in \Chain{}(E) \,:\, \min_{0\le i < N} \sfd(q_i,q_{i+1}) \geq \frac{1}{k} \right\}.$$
    By Proposition \ref{prop:outer_measure}, $\cModepsilon{\varepsilon}{p}$ is an outer measure. So it is enough to prove that $\cModepsilon{\varepsilon}{p}(\sfC_k) = 0$ for every $k$. The function $\rho = 2k\cdot \chi_E$ belongs to Adm$^{\varepsilon}(\sfC_k)$. Therefore $\cModepsilon{\varepsilon}{p}(\sfC_k) \le \int \rho^p\,\d\mm = (2k)^p\mm(E)=0$.
\end{proof}


\begin{remark}
    The previous lemma differs to the classical case in which the modulus is defined in terms of rectifiable curves. Given a Borel set $E$ with $\mm(E)=0$, ${\rm Mod}_p(\Gamma(E))=0$ if and only if ${\rm Cap}_p(E)=0$, see \cite[Prop.\ 7.2.8]{HKST15}, where $\Gamma(E)$ denotes the family of curves intersecting $E$. In other words, given a $\mm$-null set with positive capacity, the $p$-modulus of the curves hitting this set is positive. This does not happen in the case of chains, as Lemma \ref{lemma:measure_zero_implies_modulezero} shows. 
    On the other hand, the same proof of Lemma \ref{lemma:measure_zero_implies_modulezero} shows in the case of curves that the $p$-modulus of the set of curves spending a positive time in $E$ is zero, if $\mm(E) = 0$. In this case ${\rm Mod}_p(\Gamma(E))$ is concentrated on the family of curves spending time zero in $E$ (see also \cite[Lemma 5.2.15]{HKST15}). 
\end{remark}

\begin{remark}
\label{rmk:WUG_up_to_measure_zero}
    Lemma \ref{lemma:measure_zero_implies_modulezero} implies that if $g \in \WUG{p}{\varepsilon}(u)$ and $h$ is a function such that $\mm(\{g \neq h\})=0$, then $h \in \WUG{p}{\varepsilon}(u)$ as well.
\end{remark}

\begin{proof}[Proof of Proposition \ref{prop:Fuglede's-lemma}]
Since $g_j$ converges to $g$ in $L^p(\X)$ then we can find a subsequence $g_{j_k}$ which converges to $g$ pointwise almost everywhere. This means that we can find a set $E \subseteq \X$ with $\mm(E) = 0$ such that $\lim_{k\to +\infty} g_{j_k}(x) = g(x)$ for every $x\in \X \setminus E$. Let us consider the set of chains $\Chain{}(E)$ which has $(\varepsilon, p)$-modulus $0$ by Lemma \ref{lemma:measure_zero_implies_modulezero}. We claim that for every chain which is not in $\Chain{}(E)$ we have $\lim_{k\to +\infty} \int_\sfc \vert g_{j_k} - g \vert = 0$. 
Let us fix a chain $\sfc = \{q_i\}_{i=0}^N$ which is not in $\Chain{}(E)$. 
Then $q_i \notin E$ for every $0\le i \le N$. In particular $\lim_{k\to +\infty}g_{j_k}(q_i) = g(q_i)$ for $0\le i \le N$. Therefore
$$\lim_{k\to +\infty} \int_\sfc \vert g_{j_k} - g \vert = \lim_{k\to +\infty}  \sum_{i=0}^{N-1} \frac{\vert g_{j_k}(q_i) - g(q_i) \vert + \vert g_{j_k}(q_{i+1}) - g(q_{i+1})\vert}{2}\sfd(q_i,q_{i+1}) = 0.$$
\end{proof}
\begin{remark}
    \label{rmk:chain-modulus-of-measure-zero-sets}
    Lemma \ref{lemma:measure_zero_implies_modulezero} is true for $\cModepsilon{\varepsilon,\lambda}{p}$, for every $\lambda\in(0,1)$, with the same proof. An alternative argument is to observe that $\cModepsilon{\varepsilon,\lambda}{p}$ and $\cModepsilon{\varepsilon,\lambda'}{p}$ are mutually absolutely continuous if $\lambda,\lambda'\in (0,1)$. 
    However, if $\lambda \in \{0,1\}$, Lemma \ref{lemma:measure_zero_implies_modulezero} is no more true. Indeed, if $\Q$ is the set of rational numbers in $\R$, then $\cModepsilon{\varepsilon,\lambda}{p}(\Chain{}(\Q)) = +\infty$ for $\lambda \in \{0,1\}$. This follows from the fact that every admissible function has to be equal to $+\infty$ on $\R\setminus \Q$. On the other hand, the same proof as above shows that given $E\subseteq \X$ such that $\mm(E)=0$, then
    $$\cModepsilon{\varepsilon,1}{p}(\{ \sfc = \{q_i\}_{i=0}^N \in \Chain{}\, : \, q_i \in E \text{ for some } i=0, \ldots, N-1\}) = 0$$
    and
    $$\cModepsilon{\varepsilon,0}{p}(\{ \sfc = \{q_i\}_{i=0}^N \in \Chain{}\, : \, q_i \in E \text{ for some } i=1, \ldots, N\}) = 0.$$
    This is enough for adapting the proof of Proposition \ref{prop:Fuglede's-lemma} to $\lambda \in \{0,1\}$.
\end{remark}
As a consequence we prove that the set $\WUG{p}{\varepsilon}(u) \cap L^p(\X)$ is closed in $L^p(\X)$. Actually a stronger statement, that is the chain counterpart of \cite[Proposition 6.3.30]{HKST15}, holds. Notice that in \cite[Proposition 6.3.30]{HKST15} is required the convergence of $u_j$ to $u$ ${\rm Cap}_p$-a.e.\, while in our result it is enough to consider convergence $\mm$-a.e.
\begin{proposition}
    \label{prop:limit_weak_upper_gradients}
    Let $u_j \to u$ pointwise $\mm$-almost everywhere, let $g_j \in \WUG{p}{\varepsilon}(u_j)$ and suppose $g_j \to g$ in $L^p(\X)$. Then $g \in \WUG{p}{\varepsilon}(u)$.
\end{proposition}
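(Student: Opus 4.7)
The plan is to combine the chain version of Fuglede's lemma (Proposition \ref{prop:Fuglede's-lemma}) with the fact that chains hitting an $\mm$-null set form a family of $(\varepsilon,p)$-modulus zero (Lemma \ref{lemma:measure_zero_implies_modulezero}); this second fact is precisely what makes the chain case strictly simpler than the curve case treated in \cite[Proposition 6.3.30]{HKST15}, where capacity-level control is required.

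First, I would apply Proposition \ref{prop:Fuglede's-lemma} to extract a subsequence $\{g_{j_k}\}$ such that
\begin{equation*}
\lim_{k\to+\infty}\int_{\sfc}|g_{j_k}-g|=0\qquad\text{for every chain }\sfc\notin\sfC_F,
\end{equation*}
where $\sfC_F\subseteq\Chain{}$ satisfies $\cModepsilon{\varepsilon}{p}(\sfC_F)=0$. Next, let $E\subseteq\X$ be a Borel set with $\mm(E)=0$ outside of which $u_j\to u$ pointwise; by Lemma \ref{lemma:measure_zero_implies_modulezero} we have $\cModepsilon{\varepsilon}{p}(\Chain{}(E))=0$. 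For each $j$, since $g_j\in\WUG{p}{\varepsilon}(u_j)$, there exists a family $\sfC_j\subseteq\Chain{}$ with $\cModepsilon{\varepsilon}{p}(\sfC_j)=0$ such that the upper gradient inequality for $(u_j,g_j)$ holds on every chain in $\Chain{\varepsilon}\setminus\sfC_j$. Define
\begin{equation*}
\sfC_\infty := \sfC_F \,\cup\, \Chain{}(E) \,\cup\, \bigcup_{k\in\N}\sfC_{j_k}.
\end{equation*}
By the countable subadditivity of $\cModepsilon{\varepsilon}{p}$ established in Proposition \ref{prop:outer_measure}, $\cModepsilon{\varepsilon}{p}(\sfC_\infty)=0$.

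Fix any chain $\sfc=\{q_i\}_{i=0}^N\in\Chain{\varepsilon}\setminus\sfC_\infty$. Since $\sfc\notin\Chain{}(E)$, every point $q_i$ lies in $\X\setminus E$, so $u_{j_k}(q_i)\to u(q_i)$ for all $i$; in particular $u_{j_k}(\alpha(\sfc))\to u(\alpha(\sfc))$ and $u_{j_k}(\omega(\sfc))\to u(\omega(\sfc))$. Since $\sfc\notin\sfC_{j_k}$ for all $k$, the inequality
\begin{equation*}
|u_{j_k}(\omega(\sfc))-u_{j_k}(\alpha(\sfc))|\le\int_{\sfc}g_{j_k}
\end{equation*}
holds for every $k$. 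Finally, $\sfc\notin\sfC_F$ gives $\int_{\sfc}g_{j_k}\to\int_{\sfc}g$ (using also that $\int_{\sfc}g_{j_k}=\int_{\sfc}g+(\int_{\sfc}g_{j_k}-\int_{\sfc}g)$ and bounding the error by $\int_{\sfc}|g_{j_k}-g|\to 0$). Passing to the limit yields
\begin{equation*}
|u(\omega(\sfc))-u(\alpha(\sfc))|\le\int_{\sfc}g,
\end{equation*}
and since this holds outside a family of $(\varepsilon,p)$-modulus zero, $g\in\WUG{p}{\varepsilon}(u)$.

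There is no real obstacle here: the single delicate point is ensuring that pointwise $\mm$-a.e. convergence of $u_j$ suffices at the endpoints (and all internal vertices) of modulus-a.e. chain, which is exactly the content of Lemma \ref{lemma:measure_zero_implies_modulezero} and marks the main contrast with the curve setting, where one would need capacity-a.e. convergence of $u_j$ as in \cite[Proposition 6.3.30]{HKST15}.
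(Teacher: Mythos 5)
Your proof is correct and follows essentially the same route as the paper's: extract a subsequence via the chain Fuglede lemma, discard the chains touching the $\mm$-null set of non-convergence of $u_j$ using Lemma \ref{lemma:measure_zero_implies_modulezero}, combine the exceptional families by countable subadditivity of $\cModepsilon{\varepsilon}{p}$, and pass to the limit on every remaining chain. The only difference is notational (you track exceptional families where the paper intersects the complementary ``good'' families), so there is nothing to add.
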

\begin{proof}
    Let $E$ be the set of points of $\X$ where the convergence of $u_j$ to $u$ does not hold. By Proposition \ref{prop:Fuglede's-lemma} we can extract a further subsequence, not relabeled, and a set of chains $\sfC$ such that $\lim_{j\to +\infty} \int_\sfc g_j = \int_\sfc g$ for every $\sfc \in \sfC$ and with $\cModepsilon{\varepsilon}{p}(\Chain{}\setminus \sfC) = 0$. Since $g_j \in \WUG{p}{\varepsilon}(u_j)$ we can find set of chains ${\sfC}_j$ such that $|u_j(\omega(\sfc)) - u_j(\alpha(\sfc))| \leq \int_\sfc g_j$ for every $\sfc \in {\sfC}_j$ and such that $\cModepsilon{\varepsilon}{p}(\Chain{} \setminus {\sfC}_j) = 0$. The set of chains ${\sfC}' = \left({\sfC} \cap \bigcap_{j \in \N} {\sfC}_j\right) \setminus \Chain{}(E)$ still satisfies $\cModepsilon{\varepsilon}{p}(\Chain{} \setminus {\sf C}') = 0$, because of Proposition \ref{prop:outer_measure}, since $\Chain{}\setminus {\sfC}' = \Chain{}(E) \cup (\Chain{} \setminus {\sfC}) \cup \bigcup_j (\Chain{} \setminus {\sfC}_j)$.\\
    For every $\sfc \in {\sfC}'$ we have 
    $$|u(\omega(\sfc)) - u(\alpha(\sfc))| = \lim_{j\to +\infty} |u_j(\omega(\sfc)) - u_j(\alpha(\sfc))| \le \lim_{j\to +\infty} \int_\sfc g_j = \int_\sfc g.$$
    This shows that $g\in \WUG{p}{\varepsilon}(u)$.
\end{proof}
\begin{remark}
    Proposition \ref{prop:limit_weak_upper_gradients} remains true for every $\lambda \in (0,1)$, while it is not clear if it holds for $\lambda \in \{0,1\}$. However it is still true, and the proof is the same, that $\WUG{p}{\varepsilon,1}(u)$ and $\WUG{p}{\varepsilon,0}(u)$ are closed with respect to the $L^p(\X)$-topology.
\end{remark}

As a consequence we can find a $p$-weak $\varepsilon$-upper gradient of minimal norm.
\begin{proposition}
\label{prop:minimal-weak-upper-gradient}
    The set $\WUG{p}{\varepsilon}(u) \cap L^p(\X)$ is a closed, convex subset of $L^p(\X)$. If not empty, it contains an element of minimal $L^p(\X)$-norm. If $p>1$ such element is unique.
\end{proposition}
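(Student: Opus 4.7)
My plan is to deduce convexity and closedness directly from what is already in hand and then produce the minimizer in two cases, $p>1$ (by reflexivity) and $p=1$ (via Komlós's theorem combined with the modulus-is-insensitive-to-$\mm$-null-sets Lemma \ref{lemma:measure_zero_implies_modulezero}); uniqueness for $p>1$ will be the standard strict-convexity argument.

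For convexity, given $g_1,g_2\in \WUG{p}{\varepsilon}(u)$ I fix $\cModepsilon{\varepsilon}{p}$-null chain families $\sfC_1,\sfC_2$ on whose complements each $g_i$ satisfies the upper gradient inequality; the subadditivity of $\cModepsilon{\varepsilon}{p}$ (Proposition \ref{prop:outer_measure}) makes $\sfC_1\cup\sfC_2$ still null, and on its complement the linearity of $\sfc\mapsto \int_\sfc \cdot$ delivers the inequality for $tg_1+(1-t)g_2$. Closedness in $L^p(\X)$ is exactly Proposition \ref{prop:limit_weak_upper_gradients} applied to the constant sequence $u_j\equiv u$.

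For existence when $p>1$, pick a minimizing sequence $g_n$; it is $L^p$-bounded, so by reflexivity of $L^p(\X)$ a subsequence converges weakly to some $g$, and since the set is closed and convex it is also weakly closed by Mazur, so $g$ lies in it. Weak lower semicontinuity of the norm then makes $g$ a minimizer. Uniqueness follows from strict convexity of $L^p$: two distinct minimizers $g,g'$ would give $\|\tfrac12(g+g')\|_{L^p}<\|g\|_{L^p}$ while $\tfrac12(g+g')$ remains admissible by convexity.

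The main obstacle is existence when $p=1$: $L^1$ is not reflexive, and a simple two-point example ($\X=\{0,1\}$ with $\sfd(0,1)=1$, $u(0)=0$, $u(1)=1$, $g_1(0)=g_2(1)=2$, $g_1(1)=g_2(0)=0$) shows that the lattice property $\min(g_1,g_2)\in\WUG{p}{\varepsilon}(u)$ fails for chains, so the classical ``running pointwise minimum'' construction of the minimal upper gradient is unavailable. My plan is to apply Komlós's theorem to a minimizing sequence $g_n$ to extract a subsequence whose Cesàro averages $h_k$ converge $\mm$-a.e.\ to some $g\in L^1$; each $h_k$ lies in $\WUG{1}{\varepsilon}(u)$ by convexity. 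Lemma \ref{lemma:measure_zero_implies_modulezero} guarantees that the family of chains touching the $\mm$-null exceptional set for a.e.\ convergence has $\cModepsilon{\varepsilon}{1}$-measure zero, so on the complement of that family, together with the countable union of exceptional families for the $h_k$, one has $h_k(q_i)\to g(q_i)$ at every vertex $q_i$ of $\sfc$ and hence $\int_\sfc h_k\to \int_\sfc g$. Passing to the limit in the weak upper gradient inequality for $h_k$ yields $g\in\WUG{1}{\varepsilon}(u)$, and Fatou on the $L^1$-norm, combined with the Cesàro estimate $\|h_k\|_{L^1}\le \tfrac{1}{k}\sum_{j=1}^k \|g_{n_j}\|_{L^1}\to I$, closes the argument.
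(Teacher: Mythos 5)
Your argument is correct, and on the existence part it is actually more complete than the paper's. The paper's proof is the short one: closedness is exactly your application of Proposition \ref{prop:limit_weak_upper_gradients} with $u_j\equiv u$, convexity is declared trivial (your union-of-null-chain-families argument via Proposition \ref{prop:outer_measure} is the implicit justification), uniqueness for $p>1$ is strict convexity as you say, and the existence of a minimal-norm element is simply asserted to follow ``directly'' as the projection of $0$ onto a closed convex set. For $p>1$ that assertion is backed by reflexivity/uniform convexity, which is what your Mazur-plus-weak-compactness argument supplies. For $p=1$, however, a closed convex subset of $L^1$ need not contain an element of minimal norm (non-reflexivity; the standard hyperplane examples), so the paper's one-line existence claim glosses over a real issue that your Koml\'os argument genuinely resolves. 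The mechanism that makes Koml\'os work here --- stability of $\WUG{1}{\varepsilon}(u)$ under $\mm$-a.e.\ convergence of the gradients, because Lemma \ref{lemma:measure_zero_implies_modulezero} annihilates the chains touching the exceptional $\mm$-null set and countable subadditivity lets you discard the countably many null families attached to the $h_k$ --- is exactly the mechanism behind the paper's Fuglede lemma (Proposition \ref{prop:Fuglede's-lemma}), so you stay entirely within the paper's toolbox; note only that $\mm$ is $\sigma$-finite (separable space, measure finite on bounded sets), as Koml\'os requires, and that the limit should be replaced by a Borel representative, which is harmless by Remark \ref{rmk:WUG_up_to_measure_zero}. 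Your two-point example for the failure of the lattice property is consistent with the paper's Example \ref{rmk:mWUG_not_unique} of non-uniqueness at $p=1$ and correctly explains why the classical ``pointwise minimum'' construction of minimal upper gradients is unavailable.
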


\begin{proof}
    We already showed in Proposition \ref{prop:limit_weak_upper_gradients} that $\WUG{p}{\varepsilon}(u)$ is closed, while its convexity is trivial. The existence of an element of minimal norm, i.e. the existence of a projection of $0 \in L^p(\X)$ on $\WUG{p}{\varepsilon}(u)$, follows directly. The uniqueness statement for $p>1$ is a consequence of the strict convexity of the norm of $L^p(\X)$ for $p>1$.
\end{proof}
On the other hand the minimal norm can be computed also using true $\varepsilon$-upper gradients because of the next result.
\begin{proposition}
    \label{prop:WUG=closure-of-UG}
    The set $\WUG{p}{\varepsilon}(u) \cap L^p(\X)$ is the $L^p(\X)$-closure of $\UG{\varepsilon}(u)$.
\end{proposition}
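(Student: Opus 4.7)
The plan is to prove the two inclusions separately. The inclusion $\overline{\UG{\varepsilon}(u)\cap L^p(\X)}^{L^p(\X)}\subseteq \WUG{p}{\varepsilon}(u)\cap L^p(\X)$ is immediate: any $\varepsilon$-upper gradient is a $p$-weak $\varepsilon$-upper gradient, and the target set is $L^p(\X)$-closed by Proposition \ref{prop:minimal-weak-upper-gradient}.

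For the reverse inclusion, I would fix $g\in \WUG{p}{\varepsilon}(u)\cap L^p(\X)$ and define the exceptional family
\[
\sfC_{\mathrm{bad}} := \left\{\sfc\in \Chain{\varepsilon}\,:\, |u(\omega(\sfc))-u(\alpha(\sfc))| > \int_\sfc g \right\}.
\]
By definition, $\cModepsilon{\varepsilon}{p}(\sfC_{\mathrm{bad}})=0$. The key construction is a geometric-series trick: given $\delta>0$, for each $n\ge 1$ choose $\rho_n\in \mathrm{Adm}^\varepsilon(\sfC_{\mathrm{bad}})$ with $\|\rho_n\|_{L^p(\X)}\le \delta\,2^{-n}$, and set
\[
\rho_\delta := \sum_{n\ge 1}\rho_n.
\]
Then $\rho_\delta$ is Borel, $\|\rho_\delta\|_{L^p(\X)}\le \delta$ by Minkowski's inequality, and by Tonelli applied to the finite sum defining the chain integral, for every $\sfc\in \sfC_{\mathrm{bad}}$
\[
\int_\sfc \rho_\delta = \sum_{n\ge 1}\int_\sfc \rho_n \ge \sum_{n\ge 1}1 = +\infty.
\]

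I would then verify that $g+\rho_\delta \in \UG{\varepsilon}(u)\cap L^p(\X)$. On $\sfc \in \Chain{\varepsilon}\setminus \sfC_{\mathrm{bad}}$ the upper gradient inequality for $g$ already holds, so the inequality for $g+\rho_\delta$ follows from $\rho_\delta\ge 0$. On $\sfc \in \sfC_{\mathrm{bad}}$, note that $\int_\sfc g < |u(\omega(\sfc))-u(\alpha(\sfc))|<\infty$ and hence both terms $\int_\sfc g$ and the right-hand side of the weak inequality are finite, while $\int_\sfc(g+\rho_\delta)=\int_\sfc g+\int_\sfc \rho_\delta = +\infty$, so the inequality is trivial. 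Since $\|(g+\rho_\delta)-g\|_{L^p(\X)}\le \delta$, letting $\delta \to 0$ exhibits $g$ as an $L^p(\X)$-limit of elements of $\UG{\varepsilon}(u)\cap L^p(\X)$, finishing the proof.

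The main obstacle is more a conceptual observation than a real difficulty: one must be careful because the jumps $|u(\omega(\sfc))-u(\alpha(\sfc))|$ on bad chains are a priori unbounded, so a single admissible $\rho$ with small $L^p$-norm need not dominate them after multiplication by a constant. The summation trick bypasses this by producing a single function in $L^p(\X)$ with arbitrarily small norm whose chain integral is already $+\infty$ on every bad chain, removing any dependence on the size of the jump.
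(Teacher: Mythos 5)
Your proof is correct and follows essentially the same route as the paper's: the non-trivial inclusion is obtained in both cases by exhibiting, for the modulus-zero exceptional family, a Borel function of arbitrarily small $L^p(\X)$-norm whose chain integral is $+\infty$ on every exceptional chain, and adding it to $g$. The only (cosmetic) difference is that you sum the admissible functions $\rho_n$ directly with norms $\delta 2^{-n}$, whereas the paper forms $\rho=(\sum_j\rho_j^p)^{1/p}$ once and then adds $2^{-k}\rho$; your variant in fact makes the divergence $\int_\sfc\rho_\delta=+\infty$ slightly more immediate.
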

\begin{proof}
    By Proposition \ref{prop:limit_weak_upper_gradients} we know that $\WUG{p}{\varepsilon}(u) \cap L^p(\X)$ is closed in $L^p(\X)$ and therefore it contains the $L^p(\X)$-closure of $\UG{\varepsilon}(u)$. Let $g\in \WUG{p}{\varepsilon}(u)$. Let $\sfC$ be a family of chains such that $\cModepsilon{\varepsilon}{p}(\Chain{}\setminus \sfC) = 0$ and such that $|u(\omega(\sfc)) - u(\alpha(\sfc))| \le \int_\sfc g$ for every $\sfc \in \sfC$. By definition, for every $j\ge 1$ there exists an admissible map $\rho_j \in \text{Adm}^\varepsilon(\Chain{}\setminus \sfC)$ such that $\int \rho_j^p\,\d\mm \le 2^{-jp}$. We set $\rho = \left(\sum_{j\ge 1} \rho_j^p\right)^{\frac{1}{p}} \in \text{Adm}^\varepsilon(\Chain{}\setminus \sf C)$, because $\rho \ge \rho_j$ for every $j$. Moreover $\rho \in L^p(\X)$ and $\int_\sfc\rho = \infty$ for every $\sfc \in \Chain{}\setminus \sfC$. Now, for every $k\in \N$, we define the function $g_k := g + 2^{-k}\rho$. It is easy to check that $g_k$ converges to $g$ in $L^p(\X)$ and that $g_k \in \UG{\varepsilon}(u)$.
\end{proof}
\begin{example}[{Non uniqueness of minimal weak chain upper gradient}]
\label{rmk:mWUG_not_unique}
    If $p=1$ it can happen that there is more than one element of minimal norm in $\WUG{1}{\varepsilon}(u)$. We now produce an example of a metric measure space $(\X,\sfd,\mm)$ and a function $u\colon \X \to \R$ such that for every $0<\varepsilon \le \frac{1}{3}$ there are infinitely many elements of minimal norm in $\WUG{1}{\varepsilon}(u)$. 

    We define the following two sequences of real numbers: $x_n = n$, $y_n = n + \frac{1}{n}$, for $n\ge 3$.
    Let $\X$ be the countable set $\X := \bigcup_{n \ge 3} \{ x_n, y_n\}\subset \R$. We endow $\X$ with the Euclidean distance and with the reference measure $\mm := \sum_{n \ge 3} \frac{1}{n^3} \left( \delta_{x_n}+ \delta_{y_n} \right)$. Notice that $\X$ is complete. We define the function $u \colon \X \to \R$ as $u(x_n)=1$ and $u(y_n)=0$ for every $n \ge 3$. We fix $0<\varepsilon\le \frac{1}{3}$ and we notice that all the possible nonconstant $\varepsilon$-chains with two elements are of the form $\{x_n,y_n\}$ and $\{y_n,x_n\}$, for $n\ge \varepsilon^{-1}$. Therefore, by Remark \ref{rem:integral_along_chains_of_length_2}, a function $g\colon \X \to [0,+\infty]$ is a $\varepsilon$-upper gradient if and only if
    \begin{equation}
        \label{eq:example_UG_condition}
        g(x_n) + g(y_n) \ge 2n \quad \forall n \ge \varepsilon^{-1}.
    \end{equation}
    Hence, its $L^1(\mm)$ norm satisfies the following lower bound
    \begin{equation*}
        \|g\|_{L^1(\mm)} = \sum_{n \ge 3} \frac{g(x_n)+g(y_n)}{n^3} \ge 2\sum_{n \ge \varepsilon^{-1}} \frac{1}{n^2} =:L_\varepsilon.
    \end{equation*}
    For every $\mu \in [0,1]$ the function $g_{\mu,\varepsilon}$ defined as
    \begin{equation*}
        g_{\mu,\varepsilon}(x) := \begin{cases}
            2n\mu & x=x_n \text{ for }n \ge \varepsilon^{-1}\\
            2n(1-\mu) & x=y_n \text{ for }n \ge \varepsilon^{-1}\\
            0 & \text{otherwise}
        \end{cases}
    \end{equation*}
    we have $\|g_{\mu,\varepsilon}\|_{L^1(\mm)}=L_\varepsilon$, therefore they are all $\varepsilon$-chain upper gradients of minimal $L^1$-norm, by Proposition \ref{prop:WUG=closure-of-UG}.
\end{example}

\begin{remark}
    Every result of this section extends verbatim to the case $\lambda \in (0,1)$, while some differences appear in case $\lambda \in \{0,1\}$. For simplicity we state the results for $\lambda = 1$, the case $\lambda = 0$ being analogous. We have already noticed that we do not know if Proposition \ref{prop:limit_weak_upper_gradients} holds for $\lambda = 1$, but in any case $\WUG{p}{\varepsilon,1}(u)$ is closed. Moreover it is possible to show, adapting verbatim the proofs of \cite[Lemma 6.3.8]{HKST15}, that $\WUG{p}{\varepsilon,1}(u)$ is a lattice. As a consequence there exists a minimal $p$-weak $(\varepsilon,1)$-upper gradient $g_u$ of $u$ in the following sense: if $g\in \WUG{p}{\varepsilon, 1}(u)$ then $g_u \le g$ $\mm$-a.e. For the function $u$ in Example \ref{rmk:mWUG_not_unique} and $p=1$, then $g_u$ is equal to
    \begin{equation*}
        g_{u}(x) := \begin{cases}
            n & x=x_n \text{ for }n \ge \varepsilon^{-1}\\
            0 & \text{otherwise.}
        \end{cases}
    \end{equation*}
    The unique element of minimal norm in $\WUG{1}{\varepsilon, 0}(u)$ is instead
    \begin{equation*}
        g_u(x) := \begin{cases}
            n & x=y_n \text{ for }n \ge \varepsilon^{-1}\\
            0 & \text{otherwise.}
        \end{cases}
    \end{equation*}
\end{remark}

\section{The chain Sobolev spaces}
\label{sec:chain_sob_space}
In this section we introduce two new functionals and we study their relaxations. The two functionals are
\begin{equation}
    \begin{aligned}
        \F{\Chain{}} \colon \calL{p}(\X) \to  [0,+\infty], \quad &u \mapsto \lim_{\varepsilon \to 0} \inf\left\{ \| g \|_{L^p(\X)}\,:\, g\in \UG{\varepsilon}(u)\right\},\\
        \F{\Chain{},\, \Lip} \colon \calL{p}(\X) \to  [0,+\infty], \quad &u \mapsto \begin{cases}
            \lim_{\varepsilon \to 0} \inf\left\{ \| g \|_{L^p(\X)}\,:\, g\in \LUG{\varepsilon}(u)\right\} &\text{if } u\in \Lip(\X),\\
            +\infty &\text{otherwise},
        \end{cases}
    \end{aligned}
\end{equation}
where the infimum of an empty set is $+\infty$.
By Proposition \ref{prop:WUG=closure-of-UG}, $\F{\Chain{}}$ can be equivalently defined by $\F{\Chain{}}(u) = \lim_{\varepsilon \to 0} \inf\left\{ \| g \|_{L^p(\X)}\,:\, g\in \WUG{p}{\varepsilon}(u)\right\}$. The limits in the definitions exist because the arguments are decreasing functions. Indeed if $\varepsilon' \le \varepsilon$ then every $\varepsilon$-upper gradient is also a $\varepsilon'$-upper gradient. The two functionals satisfy properties (a),(b) and (c) of Section \ref{subsec:relaxation}. The less trivial property, which is (c), is consequence of the symmetric property in \eqref{eq:integral_is_symmetric} that implies that $\UG{\varepsilon}(u) = \UG{\varepsilon}(-u)$ for every Borel function $u\colon \X \to \R$.

The relaxations of the functionals above are denoted respectively by $\relF{\Chain{}}$ and $\relF{\Chain{}, \,\Lip}$. The associated Banach spaces are respectively $(\HH{p}{\Chain{}}(\X), \Vert \cdot \Vert_{\HH{p}{\Chain{}}(\X)})$ and $(\HH{p}{\Chain{}, \, \Lip}(\X), \Vert \cdot \Vert_{\HH{p}{\Chain{}, \, \Lip}(\X)})$. Since $\F{\Chain{}, \, \Lip}(u) \ge \F{\Chain{}}(u)$ for every $u\in \calL{p}(\X)$, we have that 
$$\HH{p}{\Chain{},\,\Lip}(\X) \subseteq \HH{p}{\Chain{}}(\X),$$
and the inclusion is $1$-Lipschitz.
\begin{remark}
    The domain of $\F{\Chain{}}$, namely the set of functions in $\calL{p}(\X)$ that admits an $L^p(\X)$-integrable $\varepsilon$-upper gradient for some $\varepsilon > 0$, is clearly larger than the domain of $\F{\Chain{},\,\Lip}$, but it is also bigger than the set of functions (not necessarily Lipschitz) that admit $p$-integrable, Lipschitz, $\varepsilon$-upper gradients for some $\varepsilon > 0$. Indeed, it contains functions that are highly non-regular, as the next example shows. Let $u = \chi_\Q$ be the characteristic function of the rational numbers on $\R$. It can be readily checked that $\LUG{\varepsilon}(u) = \emptyset$, thus it does not admit a $p$-integrable, Lipschitz, $\varepsilon$-upper gradient for every $\varepsilon > 0$, while $g\colon \X \to [0,+\infty]$, $g = \infty \cdot \chi_\Q$ belongs to $\UG{\varepsilon}(u)$. Since $\Vert g \Vert_{L^p(\R)} = 0$ we have that $\F{\Chain{}}(u) = 0$.
\end{remark}

In order to familiarize with the definition of $\F{\Chain{}}$ and $\HH{p}{\Chain{}}$ we compute explicitly this space in the case of a snowflake of a metric measure space. In view of Theorem \ref{theo-intro:equivalences-of-all-norms-complete-spaces} and the well known fact that there are no rectifiable curves in such spaces, hence $\HH{p}{curve}(\X) = L^p(\X)$, we know that it must hold that $\HH{p}{\Chain{}}(\X) = L^p(\X)$ too.
\begin{example}[{Snowflaking of a metric space $(\X,\sfd)$}]
\label{rem:snowflake}
    Let $(\X,\sfd)$ be a metric space and $0<\alpha<1$. We consider $(\X,\sfd^\alpha)$, where $\sfd^\alpha(x,y):=(\sfd(x,y))^\alpha$. Let $\mm$ be a Borel measure on $(\X,\sfd)$ (so a Borel measure on $(\X,\sfd^\alpha)$ too)
    and consider $\varepsilon >0$. Notice that $\sfc \in \Chain{\varepsilon}(\X,\sfd)$ if and only if $\sfc \in \Chain{\varepsilon^\alpha}(\X,\sfd^\alpha)$. 
    We claim that, if $g$ is a $\varepsilon$-upper gradient of $u$ on $(\X,\sfd)$, then $\varepsilon^{1-\alpha} g$ is a $\varepsilon^\alpha$-upper gradient of $u$ on $(\X,\sfd^\alpha)$.
    Indeed, for a chain $\sfc=\{ q_i\}_{i=0}^N$ such that $\sfd(q_i,q_{i+1})\le \varepsilon$ we have 
    \begin{align}
        |u(\omega(\sfc))-u(\alpha(\sfc))| & \le \sum_i \frac{g(q_i) + g(q_{i+1})}{2}\,\sfd(q_i,q_{i+1}) \\
        & = \sum_i \frac{g(q_i) + g(q_{i+1})}{2}\,\sfd^\alpha(q_i,q_{i+1})\,\sfd^{1-\alpha}(q_i,q_{i+1}) \\
        &\le \sum_i \frac{\varepsilon^{1-\alpha}g(q_i) + \varepsilon^{1-\alpha}g(q_{i+1})}{2}\,\sfd^\alpha(q_i,q_{i+1}).
    \end{align}
    Therefore, for every $u\in L^p(\X)$, we have
    \begin{equation}
        \begin{aligned}
            \F{\Chain{}}^{\X,\sfd^\alpha,\mm}(u) &= \lim_{\varepsilon \to 0} \inf\left\{ \Vert g \Vert_{L^p(\X)}\,:\, g \text{ is a } \varepsilon^\alpha\text{-upper gradient of } u  \text{ in } (\X,\sfd^\alpha)  \right\} \\
            &\le \lim_{\varepsilon \to 0} \varepsilon^{1-\alpha}\inf\left\{ \Vert g \Vert_{L^p(\X)}\,:\, g \text{ is a } \varepsilon\text{-upper gradient of } u  \text{ in } (\X,\sfd)  \right\}.
        \end{aligned}
    \end{equation}
    In particular, since every function $u\in \Lip(\X,\sfd)$ with bounded support has a chain upper gradient, namely $\sl{\varepsilon}(u)$, which is in $L^p(\X)$, then $\F{\Chain{}}^{\X,\sfd^\alpha,\mm}(u)= 0$ for all such functions. Therefore $\relF{\Chain{}}^{\X,\sfd^\alpha,\mm}(u) = 0$ for every $u\in L^p(\X)$ since the class of Lipschitz functions (w.r.t. $\sfd$) with bounded support is dense in $L^p(\X)$. Thus $\HH{p}{\Chain{}}(\X,\sfd^\alpha,\mm) = L^p(\X)$.
\end{example}

\subsection{Proof of the main results}
\label{sec:proof_main_result}
The goal of this section is to compare the space $\HH{p}{\Chain{}}(\X)$ with $\HH{p}{curve}(\X)$ and $\HH{p}{AGS}(\X)$, with the help of $\HH{p}{\Chain{},\,\Lip}(\X)$.
\begin{proposition}
\label{prop:equivalence_H_noncomplete}
    Let $(\X,\sfd,\mm)$ be a metric measure space. Then 
    \begin{equation}
        \HH{p}{\Chain{},\, \Lip}(\X) \subseteq \HH{p}{curve}(\X)
    \end{equation}
    and
    \begin{equation}
        \|u \|_{\HH{p}{curve}(\X)} \le \|u \|_{\HH{p}{\Chain{}, \Lip}(\X)}
    \end{equation}
    for every $u\in L^p(\X)$.
\end{proposition}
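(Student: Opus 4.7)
The proof is essentially a direct comparison of the two functionals at the level of Lipschitz functions, followed by passage to relaxations. The content of Proposition \ref{prop:UGeta_subset_UG} already does the heavy lifting: it converts a Lipschitz $\varepsilon$-upper gradient into a genuine upper gradient along curves. So I expect no real obstacle here.

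\textbf{Step 1 (pointwise comparison on Lipschitz functions).} Fix $u \in \Lip(\X)$, $\varepsilon > 0$, and $g \in \LUG{\varepsilon}(u)$. Since $g \in \Lip(\X)$, it is everywhere finite-valued, so Proposition \ref{prop:UGeta_subset_UG} applies and yields $g \in \UG{}(u)$. Hence $\F{curve}(u) \le \|g\|_{L^p(\X)}$. Taking the infimum over $g \in \LUG{\varepsilon}(u)$ and then sending $\varepsilon \to 0$ (the limit exists by monotonicity), I obtain
\begin{equation}
\F{curve}(u) \le \F{\Chain{},\,\Lip}(u) \qquad \text{for every } u \in \Lip(\X).
\end{equation}
The inequality is trivially true for $u \in L^p(\X) \setminus \Lip(\X)$, since then $\F{\Chain{},\,\Lip}(u) = +\infty$.

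\textbf{Step 2 (passage to relaxations).} Let $u \in L^p(\X)$. Using \eqref{eq:defin_relaxation_full_generality}, any sequence $\{u_j\} \subseteq L^p(\X)$ with $u_j \to u$ in $L^p(\X)$ and $\liminf_j \F{\Chain{},\,\Lip}(u_j) < +\infty$ must consist, up to discarding finitely many indices, of Lipschitz functions. For such sequences, Step 1 gives $\F{curve}(u_j) \le \F{\Chain{},\,\Lip}(u_j)$ for every $j$, hence $\liminf_j \F{curve}(u_j) \le \liminf_j \F{\Chain{},\,\Lip}(u_j)$. Since $\relF{curve}(u)$ is defined as the infimum of $\liminf_j \F{curve}(u_j)$ over \emph{all} $L^p(\X)$-approximating sequences, in particular over Lipschitz ones, I conclude
\begin{equation}
\relF{curve}(u) \le \relF{\Chain{},\,\Lip}(u) \qquad \text{for every } u \in L^p(\X).
\end{equation}

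\textbf{Step 3 (conclusion).} By the definition of the Banach spaces in \eqref{eq:defin_H_full_generality}, the bound in Step 2 gives $\HH{p}{\Chain{},\,\Lip}(\X) \subseteq \HH{p}{curve}(\X)$, and by the norm formula \eqref{eq:defin_norm_H_full_generality} it gives $\|u\|_{\HH{p}{curve}(\X)} \le \|u\|_{\HH{p}{\Chain{},\,\Lip}(\X)}$ for every $u \in L^p(\X)$. The restriction to Lipschitz approximating sequences in the definition of $\F{\Chain{},\,\Lip}$ (which is what makes it larger than the more permissive $\F{\Chain{}}$) is precisely what allows the use of Proposition \ref{prop:UGeta_subset_UG} without any issue with extended-valued upper gradients; this is why the analogous inequality for $\HH{p}{\Chain{}}(\X) \subseteq \HH{p}{curve}(\X)$ is not obvious and requires the more delicate approximation scheme described in the introduction.
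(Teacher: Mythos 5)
Your proposal is correct and follows the same route as the paper: Proposition \ref{prop:UGeta_subset_UG} gives $\F{curve}\le \F{\Chain{},\,\Lip}$ pointwise (since Lipschitz $\varepsilon$-upper gradients are finite-valued), and the inequality passes to the relaxations by \eqref{eq:defin_relaxation_full_generality}. The only blemish is the claim in Step 2 that a finite $\liminf$ forces all but finitely many $u_j$ to be Lipschitz (it only forces infinitely many), but this remark is superfluous since Step 1 already gives $\F{curve}(u_j)\le\F{\Chain{},\,\Lip}(u_j)$ for every $u_j\in L^p(\X)$.
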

\begin{proof}
   Proposition \ref{prop:UGeta_subset_UG} implies that $\F{curve}(u) \le \F{\Chain{},\,\Lip}(u)$ for every $u\in \calL{p}(\X)$ and this concludes the proof.
\end{proof}
\begin{theorem}
    \label{theo:density_energy_complete}
    Let $(\X,\sfd,\mm)$ be a metric measure space such that $(\X,\sfd)$ is complete. Then \begin{equation}
        \HH{p}{\Chain{},\, \Lip}(\X) = \HH{p}{curve}(\X)
    \end{equation}
    and
    \begin{equation}
        \|u \|_{\HH{p}{\Chain{},\, \Lip}(\X)}  = \|u \|_{\HH{p}{curve}(\X)}
    \end{equation}
    for every $u\in L^p(\X)$.
\end{theorem}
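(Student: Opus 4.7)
The inclusion $\HH{p}{\Chain{},\,\Lip}(\X) \subseteq \HH{p}{curve}(\X)$ together with the corresponding norm inequality is already provided by Proposition \ref{prop:equivalence_H_noncomplete}, so the plan is to establish the reverse direction. By the definitions of the two relaxations, it suffices to prove that for every $u\in L^p(\X)$ admitting an upper gradient $g\in L^p(\X)$ along curves there is a sequence $u_j\in \Lip(\X)$ with $u_j\to u$ in $L^p(\X)$ and Lipschitz chain upper gradients $g_j\in \LUG{1/j}(u_j)$ satisfying $\limsup_j\|g_j\|_{L^p(\X)}\le\|g\|_{L^p(\X)}$. I would implement the approximation scheme of Eriksson-Bique \cite{Eriksson-Bique-Density-in-energy} outlined in the introduction.

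A preliminary reduction makes $u$ bounded, nonnegative, and of bounded support: truncation and multiplication by Lipschitz cut-offs take care of boundedness and bounded support, and the splitting $u=u_+-u_-$ yields nonnegativity, each operation preserving the relevant norm through the Leibniz rule and the locality of the minimal $p$-weak upper gradient along curves in the complete setting. I would then approximate $g$ pointwise monotonically from below by a sequence of nonnegative Lipschitz functions $g_j$ with $\|g_j\|_{L^p(\X)}\to\|g\|_{L^p(\X)}$, obtained by first replacing $g$ with an $L^p$-close lower semicontinuous majorant and then writing the latter as a supremum of continuous Lipschitz minorants. Finally I would set
$$u_j(x) := \inf\left\{u(q_0)+\int_\sfc g_j \,:\, \sfc=\{q_i\}_{i=0}^N\in \Chain{1/j},\ \omega(\sfc)=x\right\}.$$
Testing against the one-point chain gives $u_j\le u$, and the nonnegativity of $u$ and $g_j$ forces $u_j\ge 0$, so $u_j$ inherits the bounded support of $u$. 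Comparing an almost-optimizer for $u_j(y)$ with its concatenation with $\{y,x\}$, and using the symmetry of the integral, one obtains
$$|u_j(y)-u_j(x)|\le\frac{g_j(x)+g_j(y)}{2}\,\sfd(x,y)\qquad\text{whenever }\sfd(x,y)\le 1/j,$$
from which global Lipschitzness of $u_j$ follows via the local boundedness of $g_j$, while $g_j\in\LUG{1/j}(u_j)$ holds by construction.

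The crux, and the main obstacle, is the $L^p(\X)$-convergence $u_j\to u$, which I would establish by contradiction in the spirit of \cite{Eriksson-Bique-Density-in-energy}. Since $0\le u_j\le u\in L^p(\X)$, by dominated convergence $L^p$-convergence is equivalent to pointwise a.e.\ convergence; its failure produces $x_0\in \supp u$, $\delta>0$ and a subsequence with $u(x_0)-u_j(x_0)>\delta$. The definition of $u_j$ supplies $1/j$-chains $\sfc_j$ ending at $x_0$ satisfying $u(\alpha(\sfc_j))+\int_{\sfc_j}g_j\le u(x_0)-\delta/2$; in particular the integrals $\int_{\sfc_j}g_j$ are uniformly bounded by $\|u\|_\infty$. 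I would then verify the hypotheses (i)--(iii) of Proposition \ref{prop:Sylvester_compactness_chains}: (i) from the chain inequality together with the uniform $L^p$-bound on the $g_j$; (ii) because the chains must traverse a definite distance for a drop of size $\delta/2$ to occur on an $L^p$-regular approximant of $u$; (iii) via a compact exhaustion $\{K_i\}$ of $\supp u$, which exists by tightness of the Radon measure $\mm$ on the complete separable space $\X$. Extracting a subsequence converging to a rectifiable curve $\gamma$ and invoking Lemma \ref{lemma:lower-semicontinuity-integral-chains-convergence} yields
$$\int_\gamma g\le\liminf_{j\to+\infty}\int_{\sfc_j}g_j\le u(\omega(\gamma))-u(\alpha(\gamma))-\delta/2,$$
contradicting the upper gradient inequality for $g$ along $\gamma$. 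Once $L^p$-convergence is in hand, $\F{\Chain{},\,\Lip}(u_j)\le\|g_j\|_{L^p(\X)}\to\|g\|_{L^p(\X)}$ together with the $L^p$-lower semicontinuity of $\relF{\Chain{},\,\Lip}$ gives $\relF{\Chain{},\,\Lip}(u)\le\|g\|_{L^p(\X)}$; taking the infimum over upper gradients $g$ of $u$ and reversing the preliminary reductions of the first step delivers the desired norm inequality.
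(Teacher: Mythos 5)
Your overall route---reduce to $u$ bounded, nonnegative, with bounded support, then run the Eriksson-Bique approximation---is a legitimate one for this particular theorem: the paper itself remarks that the reduction to nonnegative $u$ is available here (via locality of the minimal $p$-weak upper gradient along curves) and only avoids it so that the same proof adapts to Theorem \ref{theo:density-energy-chains-complete}. The gaps are in the execution of the compactness/contradiction step, exactly where the construction needs its technical scaffolding.

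First, condition (i) of Proposition \ref{prop:Sylvester_compactness_chains} does not follow from ``the chain inequality together with the uniform $L^p$-bound on the $g_j$'': a bound on $\int_{\sfc_j} g_j$ controls $\ell(\sfc_j)=\int_{\sfc_j}1$ only if $g_j$ is bounded below by a positive constant along the chain. This is why the paper first inflates the gradient to $g_\eta = g + \sigma\chi_{B_{2R}(x_0)} + \sum_i \chi_{B_{2R}(x_0)\setminus K_i}$ and confines the relevant subchains to $B_{2R-1/j}(x_0)$, where $\hat{g}_{\eta,j}\ge\sigma$. Likewise, condition (iii) is not a consequence of merely possessing a compact exhaustion: one must force $h_j\le \hat{g}_{\eta,j}$ along the chains, which is achieved by building the penalties $\min\{j\sfd(\cdot,K_i),\psi_j\}$ into the approximating gradients; your $g_j$, defined only as Lipschitz minorants of an l.s.c.\ majorant of $g$, contain neither term. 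Second, and most seriously, the infimum defining $u_j$ must be restricted to chains whose starting point lies in a closed set $A$ (built from Lusin compacta) on which $u\restr{A}$ is continuous. With arbitrary starting points the contradiction argument breaks twice: you cannot pass from $\limi_j u(\alpha(\sfc_j))$ to $u(\alpha(\gamma))$ when $\sfc_j\to\gamma$, which your final display needs since $u$ is merely Borel; and the diameter lower bound ``a drop of size $\delta/2$ forces a definite distance'' is precisely the uniform continuity of $u$ on such a set, false for general Borel $u$. These are not cosmetic omissions---they are the content of Steps 3--5 of the paper's proof and of the three bullet points verifying the hypotheses of Proposition \ref{prop:Sylvester_compactness_chains} in Step 8.
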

\begin{remark}
\label{rem:comments_between_CC_EB}
The proof of Theorem \ref{theo:density_energy_complete} can be done following word by word the proof of \cite[Theorem 1.1]{Eriksson-Bique-Density-in-energy}, with very few modifications, like the obvious one due to our definition of integral along chains, that requires Lemma \ref{lemma:lower-semicontinuity-integral-chains-convergence} in place of \cite[Lemma 2.19]{Eriksson-Bique-Density-in-energy}. However, we are not able to use this scheme of demonstration in order to prove the next Theorem \ref{theo:density-energy-chains-complete}. For this reason we propose a proof of Theorem \ref{theo:density_energy_complete} which is still inspired to the one of \cite[Theorem 1.1]{Eriksson-Bique-Density-in-energy}, but that can be easily modified to prove Theorem \ref{theo:density-energy-chains-complete}. The main difference relies on the simplifications procedures: while we are able to reduce the proof to bounded functions with bounded support, in Theorem \ref{theo:density-energy-chains-complete} we are not able to restrict the study to nonnegative functions. This is due to the fact that the $p$-minimal $\varepsilon$-weak upper gradients are not local in any suitable sense. In particular is not clear how to show that $\relF{\Chain{}}(u) = \relF{\Chain{}}(u_+) + \relF{\Chain{}}(u_-)$, where $u_+$ and $u_-$ are the positive and negative part of $u$. Notice that, a posteriori, this has to be true because of Theorems \ref{theo:density-energy-chains-complete} and \ref{theo:density_energy_complete}, since $\relF{curve}(u) = \relF{curve}(u_+) + \relF{curve}(u_-)$.
\end{remark}
\begin{proof}[Proof of Theorem \ref{theo:density_energy_complete}]
    For simplicity we divide the proof in steps. 
    \vspace{2mm}
    
    {\bf Step 1.} It is enough to prove that for every bounded function $u$ with bounded support it holds $\relF{\Chain{},\,\Lip}(u) \le \F{curve}(u)$. 
    Indeed, by \cite[Proposition 7.1.35]{HKST15}, for every $u\in L^p(\X)$ such that $\F{curve}(u) < \infty$ we can find a sequence of bounded functions $u_j$ with bounded support such that $u_j \to u$ in $L^p(\X)$ and $\limi_{j\to +\infty}\F{curve}(u_j) \le \F{curve}(u)$. Therefore 
    $$\relF{\Chain{},\,\Lip}(u) \le \limi_{j\to +\infty}\relF{\Chain{},\,\Lip}(u_j) \le \limi_{j\to +\infty}\F{curve}(u_j) \le \F{curve}(u),$$
    because of the lower semicontinuity of $\relF{\Chain{},\,\Lip}$. Since $\relF{curve}$ is the biggest lower semicontinuous functional which is smaller than or equal to $\F{curve}$, we infer that $\relF{\Chain{},\,\Lip}(u) \le \relF{curve}(u)$ for every $u\in L^p(\X)$ such that $\F{curve}(u) < \infty$. The thesis for an arbitrary $u\in L^p(\X)$ follows directly by the definitions of $\relF{\Chain{},\,\Lip}(u)$ and $\relF{curve}(u)$.
    \vspace{2mm}

    In the following we fix $x_0\in \X$ and we assume that $u\colon \X \to [-M,M]$ and that there exists $R\ge 3$ such that $u\restr{\X\setminus B_R(x_0)} = 0$. 
    \vspace{2mm}
    
    {\bf Step 2.} We claim that it is enough to prove the following statement. For every upper gradient $g\in \UG{}(u)$ and for every $\eta > 0$ there exists another upper gradient $g_\eta \in \UG{}(u)$ such that 
    \begin{equation}
        \label{eq:approx_with_g_eta}
        \Vert g - g_\eta \Vert_{L^p(\X)} < \eta
    \end{equation} 
    and with the following property. For every $j\in \N$ there exist functions $u_{\eta,j} \colon \X \to \R$ and $g_{\eta,j}\in \LUG{\frac{1}{j}}(u_{\eta,j})$ such that 
    \begin{equation}
        \label{eq:approx_with_u_eta,j}
        \lims_{j\to +\infty} \Vert u_{\eta,j} - u \Vert_{L^p(\X)} \le \eta \quad \text{and} \quad \lim_{j\to +\infty} \Vert g_{\eta,j} - g_\eta \Vert_{L^p(\X)} = 0
    \end{equation}
    Indeed, if the claim is true, then we have
        \begin{equation}
            \limi_{j\to +\infty}\F{\Chain{},\, \Lip}(u_{\eta,j}) \le \limi_{j\to +\infty} \Vert g_{\eta,j}\Vert_{L^p(\X)}= \Vert g_{\eta}\Vert_{L^p(\X)} \le \Vert g\Vert_{L^p(\X)} + \eta
        \end{equation}
    for every $\eta > 0$, where we used \eqref{eq:approx_with_g_eta} and \eqref{eq:approx_with_u_eta,j}. By a diagonal argument we deduce that $\relF{\Chain{},\,\Lip}(u)\le \Vert g\Vert_{L^p(\X)}$. By the arbitrariness of $g\in \UG{}(u)$ we infer that $\relF{\Chain{},\,\Lip}(u) \le \F{curve}(u)$, which is the statement we had to prove from Step 1.
    \vspace{2mm}

    In the remaining steps we will prove the claim of Step 2. In order to simplify the proof we notice that it is enough to prove the statement for upper gradients $g\in \UG{}(u)$ that are lower semicontinuous and such that $g \equiv 0$ on $\X\setminus B_{2R}(x_0)$. The first assertion follows by Vitali-Carathéodory Theorem (cp. \cite[page 108]{HKST15}), while the second one follows by truncation: for every $g\in \UG{}(u)$, the truncated function $g\cdot \chi_{B_{2R}(x_0)}$ is still an upper gradient of $u$ since $u\restr{\X\setminus B_R(x_0)} = 0$ and it is smaller than the original one. In the sequel we assume that $g$ has these properties.
    \vspace{2mm}

    {\bf Step 3:} For every $g$ as above, we define $g_\eta$. By Lusin's Theorem and the fact that $\mm$ is Radon, we can find compact sets $K_j \subseteq B_{2R}(x_0)$ such that $\mm(B_{2R}(x_0) \setminus K_j)^{\frac{1}{p}} \le 2^{-j}\eta$ and so that $u\restr{K_j}$ is continuous. We can suppose $K_j \subseteq K_{j+1}$ for every $j$. Let $\sigma \in (0, 1)$ be so that $\mm(B_{2R}(x_0))^{\frac{1}{p}}\sigma \le \frac{\eta}{2}$.
    Define
        \begin{equation}
            g_\eta(x) := g(x) + \sigma\chi_{B_{2R}(x_0)} + \sum_{i=1}^\infty \chi_{B_{2R}(x_0)\setminus K_i}(x).
        \end{equation}
        The function $g_\eta$ is still lower semicontinuous and belongs to $\UG{}(u)$, since it is bigger than $g$. Moreover, $g_\eta \equiv 0$ on $\X \setminus B_{2R}(x_0)$. 
        We show that \eqref{eq:approx_with_g_eta} holds. Indeed
        \begin{equation}
            \Vert g - g_\eta \Vert_{L^p(\X)} \le \sigma \mm(B_{2R}(x_0))^{\frac{1}{p}} + \sum_{i=1}^\infty \mm(B_{2R}(x_0)\setminus K_i)^{\frac{1}{p}} \le \eta.
        \end{equation}

    In the next step we define auxiliary functions $\hat{g}_{\eta,j}$. Later we will slightly modify these functions in order to define the $g_{\eta,j}$'s of Step 2.
    \vspace{2mm}
    
    {\bf Step 4.} We proceed to the definition of $\hat{g}_{\eta,j}$. Let $g_{j}$ be an increasing sequence of bounded Lipschitz functions such that $g_j \nearrow g$ pointwise, whose existence is guaranteed for instance by \cite[Corollary 4.2.3]{HKST15}. Define $\psi_{j}(x) := \max\{0,\min\{1, j(2R - \sfd(x_0, x))\}\}$. Observe that $\psi_{j}$ is $j$-Lipschitz, that $\psi_{j} \le \chi_{B_{2R}(x_0)}$, that $\psi_{j} \equiv 1$ on $B_{2R - \frac{1}{j}}(x_0)$ and that $\psi_{j} \to \chi_{B_{2R}(x_0)}$ pointwise as $j\to +\infty$.    
    We define
    \begin{equation}
            \hat{g}_{\eta,j}(x) = g_j(x) + \sigma\psi_j(x) + \sum_{i=1}^j \min\{j\sfd(x,K_i), \psi_j(x)\}.
        \end{equation}
    By definition, $\hat{g}_{\eta,j}$ is Lipschitz and bounded. Moreover, $\hat{g}_{\eta,j} \le g_\eta$ for every $j$ and $\hat{g}_{\eta,j} \nearrow g_\eta$ pointwise as $j \to +\infty$. 
    \vspace{2mm}

    We define auxiliary functions $\hat{u}_{\eta,j}$. In Step 6 will modify them in order to define the functions $u_{\eta,j}$ required by Step 2.

    {\bf Step 5.} We define $\hat{u}_{\eta,j}$. We choose $N \in \N$ so that $\mm(B_{2R}(x_0) \setminus K_N)^{\frac{1}{p}} \le (2M)^{-1}\eta$. We define the closed set $A := K_N \cup (\X \setminus B_R(x_0))$. Since $u\restr{K_N}$ and $u\restr{X\setminus B_R(x_0)}$ are continuous and since both sets are closed, then $u\restr{A}$ is continuous as well. 
    We set
    $$\hat{u}_{\eta,j}(x) := \min\left\{ M, \inf\left\{ u(\alpha(\sfc)) + \int_\sfc \hat{g}_{\eta,j} \,:\, \sfc \in \Chain{\frac{1}{j}}, \omega(\sfc) = x, \alpha(\sfc) \in A\right\} \right\}.$$
    These functions satisfy the following properties:
        \begin{itemize}
            \item[(a)] $\hat{u}_{\eta,j}\colon \X \to [-M,M]$ and $\hat{u}_{\eta,j} \le u$ on $A$: this follows directly from the definition.
            \item[(b)] $\hat{u}_{\eta,j}$ is $\max\{2Mj, \sup_\X \hat{g}_{\eta,j}\}$-Lipschitz. Indeed if $x,y\in \X$ are such that $\sfd(x,y) > \frac{1}{j}$ then $\vert \hat{u}_{\eta,j}(x) - \hat{u}_{\eta,j}(y)\vert \le 2M \le 2Mj\sfd(x,y)$. On the other hand, if $\sfd(x,y)<\frac{1}{j}$ then $\{x,y\}\in\Chain{\frac{1}{j}}_{x,y}$, implying that $\vert \hat{u}_{\eta,j}(y) - \hat{u}_{\eta,j}(x)\vert \le \frac{\hat{g}_{\eta,j}(x) + \hat{g}_{\eta,j}(y)}{2}\sfd(x,y) \le \sup_\X \hat{g}_{\eta,j} \sfd(x,y)$.
            \item[(c)] $\hat{g}_{\eta,j} \in \UG{\frac{1}{j}}(\hat{u}_{\eta,j})$. We prove that $\hat{u}_{\eta,j}(y) - \hat{u}_{\eta,j}(x) \le \int_{\sfc}\hat{g}_{\eta,j}$ for every $x,y\in \X$ and every $\sfc \in \Chain{\frac{1}{j}}_{x,y}$. This is enough to show the thesis, since the integral is symmetric. If $\hat{u}_{\eta,j}(x) = M$ there is nothing to prove. Otherwise for every $\delta > 0$ we can find a chain $\sfc_\delta \in \Chain{\frac{1}{j}}$ with $\omega(\sfc_\delta) = x$ and $\alpha(\sfc_\delta) \in A$ such that $\hat{u}_{\eta,j}(x) \ge u(\alpha(\sfc_\delta)) + \int_{\sfc_\delta} \hat{g}_{\eta,j} - \delta$. The chain $\sfc_{\delta} \star \sfc$ is admissible for the computation of the infimum in the definition of $\hat{u}_{\eta,j}(y)$, giving $\hat{u}_{\eta,j}(y) \le u(\alpha(\sfc_\delta)) + \int_{\sfc_\delta} \hat{g}_{\eta,j} + \int_\sfc \hat{g}_{\eta,j} \le \hat{u}_{\eta,j}(x) + \int_\sfc g_{\eta,j} + \delta$. The thesis follows by the arbitrariness of $\delta$.
            \item[(d)] $\hat{u}_{\eta,j}(x) \le \hat{u}_{\eta,k}(x)$ if $j\le k$. This follows since $\hat{g}_{\eta,j} \le \hat{g}_{\eta,k}$ and since each $\frac{1}{k}$-chain is also a $\frac{1}{j}$-chain.
            \item[(e)] $\hat{u}_{\eta,j}$ is constant on each $\frac{1}{j}$-chain connected component of $\X \setminus B_{2R}(x_0)$. Indeed, let $x,y \in \X$ be such that there exists a $\frac{1}{j}$-chain $\sfc_{x,y} \subseteq \X \setminus B_{2R}(x_0)$. Let $\sfc \in \Chain{\frac{1}{j}}$ be such that $\alpha(\sfc) \in A$ and $\omega(\sfc) = x$. Then $\sfc' = \sfc \star \sfc_{x,y} \in \Chain{\frac{1}{j}}$ and satisfies $\alpha(\sfc') \in A$, $\omega(\sfc') = y$. Moreover, $$\int_{\sfc'} \hat{g}_{\eta,j} = \int_{\sfc} \hat{g}_{\eta,j} + \int_{\sfc_{x,y}} \hat{g}_{\eta,j} = \int_{\sfc} \hat{g}_{\eta,j}$$
            since $\hat{g}_{\eta,j} \equiv 0$ on $\X \setminus B_{2R}(x_0)$. This is enough to show that $\hat{u}_{\eta,j}(y) \le \hat{u}_{\eta,j}(x)$. Reversing the roles of $x$ and $y$ we get the opposite inequality and so that $\hat{u}_{\eta,j}(y) = \hat{u}_{\eta,j}(x)$.
        \end{itemize}
    \vspace{2mm}

    {\bf Step 6.} Definition of $u_{\eta,j}$ and $g_{\eta,j}$. We define $u_{\eta,j}$ with a cutoff procedure to impose that $u_{\eta,j} \equiv 0$ outside $B_{3R}(x_0)$. We piecewisely define $u_{\eta,j}$. On $B_{2R}(x_0)$ we set $u_{\eta,j} = \hat{u}_{\eta,j}$. Then we define $u_{\eta,j}$ on each $\frac{1}{j}$-chain connected component $\Y$ of $\X\setminus B_{2R}(x_0)$ in the following way.
    By items (e) and (a) of Step 5, we have that $\hat{u}_{\eta,j}$ is constantly equal to some $\delta_{\Y} \in [-M,0]$ on $\Y$. We define $u_{\eta,j}$ on $\Y$ by
            \begin{equation}
                u_{\eta,j}(x) :=
                \begin{cases}
                    -\frac{\delta_\Y}{R}\sfd(x,x_0) +3\delta_\Y &\text{ if } \sfd(x,x_0)\in [2R,3R],\\
                    0 &\text{ if } \sfd(x,x_0) \ge 3R.
                \end{cases}
            \end{equation}
        The same proof of item (b) of Step 5 implies that $u_{\eta,j}$ is $\max\{ 2Mj, \sup_\X \hat{g}_{\eta,j} + \frac{M}{R}\}$-Lipschitz. Indeed, the only non trivial case is when $\sfd(x,y) < \frac{1}{j}$. In that case, if $x,y \in B_{2R}(x_0)$ then the proof does not change. If $x,y \in \X \setminus B_{2R}(x_0)$ then they must be in the same $\frac{1}{j}$-chain connected component $\Y$ of $\X \setminus B_{2R}(x_0)$, so $\vert u_{\eta,j}(x) - u_{\eta,j}(y) \vert \le \frac{\vert \delta_Y \vert}{R}\vert \sfd(x,x_0) - \sfd(y,x_0) \vert \le \frac{M}{R} \sfd(x,y)$. In the last case we have $x\in B_{2R}(x_0)$ and $y\in \X\setminus B_{2R}(x_0)$. Here we have
        $$\vert u_{\eta,j}(x) - u_{\eta,j}(y) \vert \le \vert \hat{u}_{\eta,j}(x) - \hat{u}_{\eta,j}(y) \vert + \frac{\vert \delta_\Y \vert}{R} (\sfd(y,x_0) - 2R) \le \sup_{\X} \hat{g}_{\eta,j} \sfd(x,y) + \frac{M}{R} \sfd(x,y),$$
        where $\Y$ is the $\frac{1}{j}$-chain connected component of $\X \setminus B_{2R}(x_0)$ containing $y$.

        It remains to define the new gradients $g_{\eta,j}$. We set 
        $$\delta_j := \sup \left\{ \vert \delta_\Y \vert \,:\, \Y \in \Chain{\frac{1}{j}}\textup{-cc}(\X \setminus B_{2R}(x_0))\right\}$$
        and  
        $$h_{\eta,j} := \frac{\delta_j}{R} \cdot \max\left\{0,\min\left\{1, 5 - \frac{\sfd(x_0, x)}{R}\right\}\right\}.$$
        Observe that $h_{\eta,j} = \frac{\delta_j}{R}$ on $B_{4R}(x_0)$, that $h_{\eta,j}$ is Lipschitz and that $h_{\eta,j} \equiv 0$ on $\X\setminus B_{5R}(x_0)$. Finally define $g_{\eta,j} := \hat{g}_{\eta,j} + h_{\eta,j}$.
        We claim that $g_{\eta,j} \in \LUG{\frac{1}{j}}(u_{\eta,j})$. By definition, $g_{\eta,j}$ is Lipschitz, so it remains to show it is a $\frac{1}{j}$-upper gradient of $u_{\eta,j}$. Let $\sfc \in \Chain{\frac{1}{j}}$. We divide $\sfc$ in subchains $\sfc_i$ such that $\omega(\sfc_i) = \alpha(\sfc_{i+1})$ for every $i$ and such that each $\sfc_i$ is of one of the following forms: 
        \begin{itemize}
            \item $\sfc_i \subseteq B_{2R}(x_0)$;
            \item $\sfc_i \subseteq A_{2R,3R}(x_0) := \overline{B}_{3R}(x_0) \setminus B_{2R}(x_0)$;
            \item $\sfc_i \subseteq \X \setminus B_{3R}(x_0)$
            \item $\sfc_i = \{x_i,y_i\}$ with $x_i \in B_{2R}(x_0)$ and $y_i \notin B_{2R}(x_0)$ or $x_i \notin B_{2R}(x_0)$ and $y_i \in B_{2R}(x_0)$;
            \item $\sfc_i = \{x_i,y_i\}$ with $x_i \in B_{3R}(x_0)$ and $y_i \notin B_{3R}(x_0)$ or $x_i \notin B_{3R}(x_0)$ and $y_i \in B_{3R}(x_0)$.
        \end{itemize}
        In all these cases we prove that $\vert u_{\eta,j}(\omega(\sfc_i)) - u_{\eta,j}(\alpha(\sfc_i)) \vert \le \int_{\sfc_i}g_{\eta,j}$.
         If $\sfc_i \subseteq B_{2R}(x_0)$, we use item (c) of Step 5 to get $\vert u_{\eta,j}(\omega(\sfc_i)) - u_{\eta,j}(\alpha(\sfc_i)) \vert = \vert \hat{u}_{\eta,j}(\omega(\sfc_i)) - \hat{u}_{\eta,j}(\alpha(\sfc_i)) \vert \le \int_{\sfc_i} \hat{g}_{\eta,j} \le \int_{\sfc_i} g_{\eta,j}$. If $\sfc_i \subseteq A_{2R,3R}(x_0)$, then it must be contained in the same $\frac{1}{j}$-chain connected component $\Y$ of $\X \setminus B_{2R}(x_0)$. Therefore we have $\vert u_{\eta,j}(\omega(\sfc_i)) - u_{\eta,j}(\alpha(\sfc_i)) \vert \le \frac{\vert \delta_Y \vert}{R}\vert \sfd(\omega(\sfc_i),x_0) - \sfd(\alpha(\sfc_i),x_0) \vert \le \frac{\delta_j}{R} \sfd(\omega(\sfc_i),\alpha(\sfc_i)) \le \int_{\sfc_i} h_{\eta,j} \le \int_{\sfc_i} g_{\eta,j}$. If $\sfc_i \subseteq \X \setminus B_{3R}(x_0)$ then $ 0 = \vert u_{\eta,j}(\omega(\sfc_i)) - u_{\eta,j}(\alpha(\sfc_i)) \vert \le \int_{\sfc_i} g_{\eta,j}$.
        If $\sfc_i = \{x_i,y_i\}$ is as in the last two cases, we have
        \begin{equation}
            \begin{aligned}
                \vert u_{\eta,j}(y_i) - u_{\eta,j}(x_i) \vert &\le \vert \hat{u}_{\eta,j}(y_i) - \hat{u}_{\eta,j}(x_i)\vert + \frac{\vert \delta_j \vert}{R}\max\{\sfd(y_i,\partial A_{2R,3R}(x_0)), \sfd(x_i,\partial A_{2R,3R}(x_0))\}\\
                &\le \int_{\{x_i,y_i\}} \hat{g}_{\eta,j} + \int_{\{x_i,y_i\}} h_{\eta,j} = \int_{\{x_i,y_i\}} g_{\eta,j},
            \end{aligned}
        \end{equation}
        because $\max\{\sfd(x_i,\partial A_{2R,3R}(x_0)), \sfd(y_i,\partial A_{2R,3R}(x_0))\} \le \sfd(x_i,y_i)$ and item (c) of Step 5. Therefore
        $$\vert u_{\eta,j}(\omega(\sfc)) - u_{\eta,j}(\alpha(\sfc)) \vert \le \sum_i \vert u_{\eta,j}(\omega(\sfc_i)) - u_{\eta,j}(\alpha(\sfc_i)) \vert \le \sum_i\int_{\sfc_i} g_{\eta,j} = \int_{\sfc} g_{\eta,j}.$$
        \vspace{2mm}

        \textbf{Step 7.} In this step we show that $u_{\eta,j}$ and $g_{\eta,j}$ satisfy \eqref{eq:approx_with_u_eta,j} if we prove that $u_{\eta,j}$ converges pointwise to $u$ on $K_N$ and $\hat{u}_{\eta,j}$ converges uniformly to $u \equiv 0$ on $\X \setminus B_{2R}(x_0)$ as $j\to +\infty$.
        Indeed, if this is true, we get that $\delta_j$ satisfies
        \begin{equation}
        \label{eq:delta_j_tends_to_0}
            \lims_{j\to +\infty} \delta_j = \lims_{j \to +\infty} \|\hat{u}_{\eta,j}\|_{L^\infty(\X\setminus B_{2 R}(x_0))}= 0.
        \end{equation}
        Therefore we obtain
        \begin{equation}
            \begin{aligned}
                &\lim_{j\to +\infty} \Vert u - u_{\eta,j} \Vert_{L^p(\X)}  \\
                &=\lim_{j\to +\infty}\left(\int_{K_N} \vert u - u_{\eta,j} \vert^p \,\d\mm + \int_{B_{2R}(x_0)\setminus K_N} \vert u - u_{\eta,j} \vert^p \,\d\mm + \int_{\X \setminus B_{2R}(x_0)} \vert u - u_{\eta,j} \vert^p \,\d\mm \right)^{\frac{1}{p}} \\
                &\le 0 + (2M)\mm(B_{2R}(x_0) \setminus K_N)^{\frac{1}{p}} + 0 \le \eta.
            \end{aligned}
        \end{equation}
        We used dominated convergence for the estimate of the first summand and the estimate $\vert u - u_{\eta,j} \vert \le 2M$ since both functions take values on $[-M,M]$ and $\mm(B_{2R}(x_0) \setminus K_N)^{\frac{1}{p}} \le \eta (2M)^{-1}$ for the second term. For the third term we divided the integral in two parts: on the annulus $A_{2R,3R}(x_0)$ we used again dominated convergence since $\vert u-u_{\eta,j} \vert = \vert u_{\eta,j}\vert \le \delta_j$ on it, and we can use \eqref{eq:delta_j_tends_to_0}, while on $\X \setminus B_{3R}(x_0)$ we have $\vert u - u_{\eta,j} \vert = 0$.
        This concludes the first estimate in \eqref{eq:approx_with_u_eta,j}. For the second one we observe that
        \begin{equation}
            \lims_{j\to +\infty} \Vert g_{\eta,j} - g_\eta \Vert_{L^p(\X)} \le \lims_{j\to +\infty} \left( \Vert \hat{g}_{\eta,j} - g_\eta \Vert_{L^p(\X)} + \Vert h_{\eta,j} \Vert_{L^p(\X)}\right) = 0,
        \end{equation}
        where we used dominated convergence on the first term, since $\hat{g}_{\eta,j} \to g_\eta$ pointwise almost everywhere and they are supported on $B_{2R}(x_0)$, and the estimate
        $$\Vert h_{\eta,j} \Vert_{L^p(\X)} \le \frac{\delta_j}{R} \mm(\overline{B}_{5R}(x_0))^{\frac{1}{p}},$$
        where the limit superior of the right hand side is $0$ because of \eqref{eq:delta_j_tends_to_0}.
    \vspace{2mm}

    In the last two steps we show that $u_{\eta,j}$ converges to $u$ pointwise on $K_N$ and that $\hat{u}_{\eta,j}$ converges uniformly to $0$ outside $B_{2R}(x_0)$.
    \vspace{2mm}
    
    {\bf Step 8.} We prove that $u_{\eta,j}$ converges pointwise to $u$ on $K_N$ as $j\to +\infty$. 
    We suppose by contradiction that there exists some $x \in K_N$ such that $u_{\eta,j}(x)$ does not converge to $u(x)$ as $j$ goes to $+\infty$. On $K_N$ we have $u_{\eta,j} = \hat{u}_{\eta,j}$, by definition. By item (d) of Step 5, the sequence $\hat{u}_{\eta,j}(x)$ is increasing and so it admits a limit. Moreover, by item (a) of Step 5, $\hat{u}_{\eta,j}(x) \le u(x)$ for every $j$. So, our assumption means that $\lim_{j\to +\infty} \hat{u}_{\eta,j}(x) < u(x)$. Let us fix $\delta > 0$ such that $\lim_{j\to +\infty} \hat{u}_{\eta,j}(x) \le u(x) - \delta$. Since $u(x) \le M$, we get $\hat{u}_{\eta,j}(x) \le M-\delta$ for every $j$. By definition, we can find chains $\sfc_j \in \Chain{\frac{1}{j}}$ such that $\omega(\sfc_j) = x$, $\alpha(\sfc_j) \in A$ and
    \begin{equation}
        \label{eq:UG_inequality_contradicted}
        u(\alpha(\sfc_j)) + \int_{\sfc_j} \hat{g}_{\eta,j} < u(x) - \frac{\delta}{2} \le M.
    \end{equation}

    \begin{figure}
        \centering
        \includegraphics[scale=0.92]{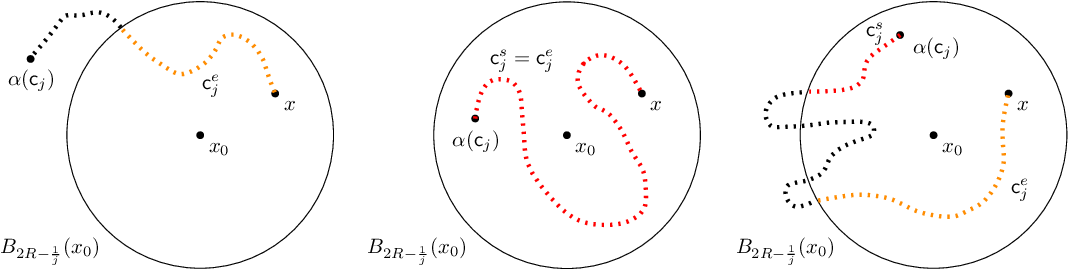}
        \caption{The picture shows the definition of $\sfc_j^s$ and $\sfc_j^e$ in three different situations that cover all possible cases. On the left, $\alpha(\sfc_j) \notin B_{2R-\frac{1}{j}}(x_0)$, so $\sfc_j^s = \emptyset$ and $\sfc_j^e \neq \sfc_j$. In the middle, $\alpha(\sfc_j) \in B_{2R-\frac{1}{j}}(x_0)$ and $\sfc_j$ is contained in $B_{2R-\frac{1}{j}}(x_0)$, so $\sfc_j = \sfc_j^s = \sfc_j^e$. On the right, $\alpha(\sfc_j) \in B_{2R-\frac{1}{j}}(x_0)$, but $\sfc_j \cap (\X \setminus B_{2R - \frac{1}{j}}(x_0)) \neq \emptyset$, so $\sfc_j^s \neq \emptyset$, $\sfc_j^s \neq \sfc_j$ and $\sfc_j^e \neq \sfc_j$.}
        \label{fig:definition_of_subchains}
    \end{figure}
    
    We consider two subchains. If $\sfc_j = \{q_0^j, \ldots, q_{N_j}^j = x\}$ we define $\sfc_j^s := \{q_0^j,\ldots,q_{i_j}^j\}$ and $\sfc_j^e := \{q_{k_j}^j,\ldots,q_{N_j}^j\}$, where $i_j$ is the biggest integer $i$ such that the chain $\{ q_0^j,\ldots,q_{i}^j \}$ is contained in $B_{2R - \frac{1}{j}}(x_0)$, while
    $k_j$ is the smallest integer $k$ such that the chain $\{q_{k}^j,\ldots,q_{N_j}^j = x\}$ is contained in $B_{2R - \frac{1}{j}}(x_0)$. Here, the superscript stay for `start' and `end', respectively. Figure \ref{fig:definition_of_subchains} represents these subchains in three different exhaustive situations. As $x\in K_N \subseteq B_{2R}(x_0)$, we have that $x \in B_{2R - \frac{1}{j}}(x_0)$ for $j$ big enough. For these indices, $\sfc_j^e$ is not empty, as it contains at least $x$. Moreover, $\omega(\sfc_j^e) = x$ and $\alpha(\sfc_j^e) \in A$. The last assertion can be proved as follows: either $\sfc_j^e = \sfc_j$, so $\alpha(\sfc_j^e) = \alpha(\sfc_j) \in A$, or $\sfd(\alpha(\sfc_j^e), x_0) \ge 2R - \frac{2}{j} \ge R$, because of the maximality property of $\sfc_j^e$, and so $\alpha(\sfc_j^e) \in \X \setminus B_{R}(x_0) \subseteq A$. On the other hand, $\sfc_j^s$ can be empty, and it is empty if and only if $\alpha(\sfc_j) \notin B_{2R - \frac{1}{j}}(x_0)$. If $\sfc_j^s$ is not empty then either $\sfc_j^s = \sfc_j$, so $\omega(\sfc_j^s) = x \in A$, or $\omega(\sfc_j^s) \notin B_{2R - \frac{2}{j}}(x_0)$ by the maximality property of $\sfc_j^s$, so $\omega(\sfc_j^s) \in A$. Moreover, $\alpha(\sfc_j^s) = \alpha(\sfc_j) \in A$. In any case the four points $\alpha(\sfc_j^s)$, $\omega(\sfc_j^s)$, $\alpha(\sfc_j^e)$, $\omega(\sfc_j^e)$ belong to $A$.
    There are three possible cases:
    \begin{itemize}
        \item[(1)] $\alpha(\sfc_j) \notin B_{R}(x_0)$, so $u(\alpha(\sfc_j)) = 0 = u(\alpha(\sfc_j^e))$. In this case, \eqref{eq:UG_inequality_contradicted} implies that
        \begin{equation}
            \label{eq:UG_inequality_contradicted_case_1}
            u(\alpha(\sfc_j^e)) + \int_{\sfc_j^e} \hat{g}_{\eta,j} \le u(\alpha(\sfc_j)) + \int_{\sfc_j} \hat{g}_{\eta,j} < u(x) - \frac{\delta}{2} = u(\omega(\sfc_j^e)) - \frac{\delta}{2}.
        \end{equation}
        \item[(2)] $\alpha(\sfc_j) \in B_{R}(x_0)$, which means $\sfc_j^s \neq \emptyset$ and $\alpha(\sfc_j^s) = \alpha(\sfc_j)$, and we have
        \begin{equation}
            \label{eq:UG_inequality_contradicted_case_2}
            u(\alpha(\sfc_j^s)) + \int_{\sfc_j^s} \hat{g}_{\eta,j} < u(\omega(\sfc_j^s)) - \frac{\delta}{4}.
        \end{equation}
        \item[(3)] $\alpha(\sfc_j) \in B_{R}(x_0)$ and \eqref{eq:UG_inequality_contradicted_case_2} does not hold. In this case, in view of \eqref{eq:UG_inequality_contradicted}, we necessarily have $\sfc_j^s \neq \sfc_j \neq \sfc_j^e$, so $\omega(\sfc_j^s), \alpha(\sfc_j^e) \notin B_{2R - \frac{2}{j}}(x_0)$. Moreover
        \begin{equation}
            \label{eq:UG_inequality_contradicted_case_3}
            \begin{aligned}
                u(\alpha(\sfc_j^e)) + \int_{\sfc_j^e} \hat{g}_{\eta,j} = u(\omega(\sfc_j^s)) + \int_{\sfc_j^e} \hat{g}_{\eta,j} &\le u(\alpha(\sfc_j^s)) + \int_{\sfc_j^s} \hat{g}_{\eta,j} + \int_{\sfc_j^e} \hat{g}_{\eta,j} + \frac{\delta}{4}\\
                &\le u(\alpha(\sfc_j)) + \int_{\sfc_j} \hat{g}_{\eta,j} + \frac{\delta}{4} \\
                &< u(x) - \frac{\delta}{4} = u(\omega(\sfc_j^e)) - \frac{\delta}{4}
            \end{aligned}
        \end{equation}
    \end{itemize}

    One of the three cases (1), (2) or (3) holds true for infinitely many $j$'s. We now show how to conclude the proof supposing that case (1) occurs infinitely many times. Later we will show how to conclude in the other two cases. We restrict to the indices where (1) holds true and we do not relabel the subsequence. We claim that the assumptions of Proposition \ref{prop:Sylvester_compactness_chains} (see also the discussion in Remark \ref{rmk:compactness_chains_with_lambda}) are satisfied by $\{ \sfc_j^e \}_j$.
    \begin{itemize}
        \item {\em Length}. $\ell(\sfc_j^e) \le \sigma^{-1}\int_{\sfc_j^e}\hat{g}_{\eta,j} \le \sigma^{-1}M$, where we used \eqref{eq:UG_inequality_contradicted_case_1}, \eqref{eq:UG_inequality_contradicted} and the fact that $\sfc_j^e \subseteq B_{2R - \frac{1}{j}}(x_0)$, so $\hat{g}_{\eta,j} \ge \sigma$  on $\sfc_j^e$. 
        \item {\em Diameter}. Since $u\restr{A}$ is continuous, we can take $\Delta > 0$ so that for every $y\in A$ such that $\sfd(x, y) \le \Delta$ we have $\vert u(x) - u(y) \vert \le \frac{\delta}{4}$. Since by \eqref{eq:UG_inequality_contradicted_case_1} $u(\alpha(\sfc_j^e)) < u(x) - \frac{\delta}{4}$, and since $\alpha(\sfc_j^e) \in A$, we conclude that ${\rm Diam}(\sfc_j^e) \ge \sfd(\alpha(\sfc_j^e), x) \ge \Delta$.
        \item {\em $h$-sum}. By definition, $h_j\restr{B_{2R - \frac{1}{j}}(x_0)} \le \hat{g}_{\eta,j}\restr{B_{2R - \frac{1}{j}}(x_0)}$, so
        \begin{equation}
            \label{eq:weight_upper_bound}
            \int_{\sfc_j^e}h_j \le \int_{\sfc_j^e} \hat{g}_{\eta,j}\le M,
        \end{equation}
        again by \eqref{eq:UG_inequality_contradicted_case_1} and \eqref{eq:UG_inequality_contradicted}.
    \end{itemize}
    Therefore the chains $\sfc_j^e$ subconverge to a curve $\gamma$ of $\X$, by Proposition \ref{prop:Sylvester_compactness_chains} and Remark \ref{rmk:compactness_chains_with_lambda}. We relabel the sequence accordingly, and we denote the chains again by $\sfc_j^e$. Since $A$ is closed, so $\alpha(\gamma) = \lim_{j\to +\infty} \alpha(\sfc_j^e)$ belongs to $A$, and $u\restr{A}$ is continuous, we have that 
    $u(\alpha(\gamma)) = \lim_{j\to +\infty} u(\alpha(\sfc_j^e)).$
    This, together with Step 4, says that we are in position to apply Lemma \ref{lemma:lower-semicontinuity-integral-chains-convergence} to the functions $\hat{g}_{\eta,j} \nearrow g_\eta$ and to the sequence of chains $\sfc_j^e$ converging to $\gamma$, concluding that
    \begin{equation}
        \begin{aligned}
            u(\alpha(\gamma)) + \int_\gamma g_\eta \le \limi_{j\to +\infty} u(\alpha(\sfc_j^e)) + \int_{\sfc_j^e} \hat{g}_{\eta,j} \le
            u(x) - \frac{\delta}{2} = u(\omega(\gamma)) - \frac{\delta}{2}.
        \end{aligned}
    \end{equation}
    Here we used \eqref{eq:UG_inequality_contradicted_case_1} in the last inequality.
    This contradicts the fact that $g_\eta \in \UG{}(u)$.

    Suppose now that case (3) occurs for infinitely many indices $j$. Then \eqref{eq:UG_inequality_contradicted_case_3} and the same proof given above says that $\{\sfc_j^e\}_j$ is a sequence of chains satisfying again the assumptions of Proposition \ref{prop:Sylvester_compactness_chains}. The remaining part of the argument is exactly the same, using \eqref{eq:UG_inequality_contradicted_case_3} to violate the fact that $g_\eta$ is an upper gradient of $u$.

    Finally suppose that (2) occurs for infinitely many indices. Then the sequence $\{\sfc_j^s\}_j$ satisfies the assumptions of Proposition \ref{prop:Sylvester_compactness_chains}. Indeed, the estimate of the length and the $h$-sum is identical, using \eqref{eq:UG_inequality_contradicted_case_2} instead of \eqref{eq:UG_inequality_contradicted_case_1}. In the proof of the lower bound of the diameter we need to distinguish two cases. If $\sfc_j^s = \sfc_j$ then the same proof as above says that ${\rm Diam}(\sfc_j^s) \ge \Delta$. Otherwise we have that $\omega(\sfc_j^s) \notin B_{2R-\frac{2}{j}}(x_0)$, so ${\rm Diam}(\sfc_j^s) \ge \sfd(\alpha(\sfc_j^s), \omega(\sfc_j^s)) \ge R - \frac{2}{j} \ge 1$. In any case, ${\rm Diam}(\sfc_j^s) \ge \min\{\Delta, 1\} > 0.$    
    The remaining part of the argument is again the same, using \eqref{eq:UG_inequality_contradicted_case_2} to violate the fact that $g_\eta$ is an upper gradient of $u$. We remark that the argument works since both endpoints of $\sfc_j^s$ belong to $A$, which is closed and on which $u$ is continuous.
    \vspace{2mm}

    {\bf Step 9.} We prove that $\hat{u}_{\eta,j}$ converges uniformly to $0$ on $\X\setminus B_{2R}(x_0)$ as $j\to +\infty$. Suppose it is not the case and recall that $\hat{u}_{\eta,j} \le u$ because of item (a) of Step 5. Then we can find $0<\delta <M$ and a sequence of points $x_j\notin B_{2R}(x_0)$ such that $\hat{u}_{\eta,j}(x_j) < -\delta$. By definition of $\hat{u}_{\eta,j}$ there must be a chain $\sfc_j = \{q_0^j,\ldots,q_{N_j}^j\} \in \Chain{\frac{1}{j}}$ with $\alpha(\sfc_j) \in A$ and $\omega(\sfc_j) = x_j$ such that $u(\alpha(\sfc_j)) + \int_{\sfc_j}\hat{g}_{\eta,j} \le -\frac{\delta}{2}$. Since $u\restr{\X \setminus B_{R}(x_0)} = 0$, then $\alpha(\sfc_j)$ must belong to $B_R(x_0)$. Let $\sfc_j^s = \{q_0^j,\ldots,q_{i_j}^j\}$ be the subchain of $\sfc_j$ with the property that $i_j$ is the biggest integer $i$ such that $\{q_0^j,\ldots,q_{i}^j\} \subseteq B_{2R - \frac{1}{j}}(x_0)$. By maximality we have that $q_{i_j}^j \notin B_{2R - \frac{2}{j}}(x_0)$. Therefore $u(q_{i_j}^j) = u(\omega(\sfc_j^s)) = 0$. Moreover, we have
    \begin{equation}
        \label{eq:UG_inequality_violated_uniform_convergence}
        u(\alpha(\sfc_j^s)) + \int_{\sfc_j^s}\hat{g}_{\eta,j} \le u(\alpha(\sfc_j)) + \int_{\sfc_j}\hat{g}_{\eta,j} \le -\frac{\delta}{2} = u(\omega(\sfc_j^s)) - \frac{\delta}{2}.
    \end{equation}
    We now claim that $\{\sfc_j^s\}_j$ satisfies the assumptions of Proposition \ref{prop:Sylvester_compactness_chains}. The proof of the upper bound on the length is the same given in Step 8, since $\hat{g}_{\eta,j} \ge \sigma$ on $\sfc_j^s$, so
    $$\ell(\sfc_j^s) \le \sigma^{-1}\int_{\sfc_j^s} \hat{g}_{\eta,j} \le \sigma^{-1}\left(-u(\alpha(\sfc_j^s)) - \frac{\delta}{2}\right) \le \sigma^{-1}M.$$
    For the diameter we have: ${\rm Diam}(\sfc_j^s) \ge \sfd(\alpha(\sfc_j^s), \omega(\sfc_j^s)) \ge R - \frac{2}{j}$, since $\alpha(\sfc_j^s) \in B_R(x_0)$ and $\omega(\sfc_j^s) \notin B_{2R - \frac{2}{j}}(x_0)$. Finally, 
    $$\int_{\sfc_j^s} h_j \le \int_{\sfc_j^s} \hat{g}_{\eta,j} \le M.$$
    Using again Proposition \ref{prop:Sylvester_compactness_chains} and Remark \ref{rmk:compactness_chains_with_lambda}, we conclude that the sequence of chains $\{\sfc_j^s\}_j$ subconverges to a curve $\gamma$ of $\X$. Arguing as in Step 8 we deduce that $g_\eta$ violates the upper gradient inequality on $\gamma$ since the endpoints of $\sfc_j^s$ belong to $A$ and $u$ is continuous on $A$. This is a contradiction.
\end{proof}

\begin{remark}
\label{rmk:UG-UGLip}
    By Proposition \ref{prop:UGeta_subset_UG} we know that every $\varepsilon$-upper gradient with finite values is an upper gradient in the classical sense. Therefore the proof above implies that if $(\X,\sfd)$ is complete then $\relF{curve}(u)$ can be realized as an infimum of the $L^p(\X)$-norms of Lipschitz upper gradients of Lipschitz functions that converge to $u$ in $L^p(\X)$. 
\end{remark}

As announced, the proof of Theorem \ref{theo:density_energy_complete} can be adapted to show that $\HH{p}{\Chain{}, \, \Lip}(\X)$ and $\HH{p}{\Chain{}}(\X)$ are isometric if $(\X,\sfd)$ is complete.
\begin{theorem}
    \label{theo:density-energy-chains-complete}
    Let $(\X,\sfd,\mm)$ be a metric measure space such that $(\X,\sfd)$ is complete. Then 
    \begin{equation}
        \HH{p}{\Chain{},\, \Lip}(\X) = \HH{p}{\Chain{}}(\X)
    \end{equation}
    and
    \begin{equation}
        \|u \|_{\HH{p}{\Chain{},\, \Lip}(\X)}  = \|u \|_{\HH{p}{\Chain{}}(\X)}
    \end{equation}
    for every $u\in L^p(\X)$.
\end{theorem}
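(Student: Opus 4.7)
The inclusion $\HH{p}{\Chain{},\Lip}(\X) \subseteq \HH{p}{\Chain{}}(\X)$ with $1$-Lipschitz identity is immediate from $\F{\Chain{},\Lip}\ge \F{\Chain{}}$ pointwise on $\calL{p}(\X)$. For the reverse direction, the plan is to adapt the proof of Theorem~\ref{theo:density_energy_complete} step by step, keeping exactly the same skeleton: a reduction to bounded $u$ with bounded support via a truncation argument, replacement of a given $\varepsilon$-upper gradient $g \in L^p(\X)$ by a lower semicontinuous perturbation $g_\eta$ with $\|g_\eta - g\|_{L^p(\X)}<\eta$, Lipschitz approximants $\hat{g}_{\eta,j}\nearrow g_\eta$, the infimum definition
\begin{equation*}
    \hat{u}_{\eta,j}(x)=\min\Big\{M,\inf\Big\{u(\alpha(\sfc))+\int_{\sfc}\hat{g}_{\eta,j}\,:\,\sfc\in\Chain{\frac{1}{j}},\ \omega(\sfc)=x,\ \alpha(\sfc)\in A\Big\}\Big\},
\end{equation*}
and a cutoff producing $u_{\eta,j}$. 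The initial reduction uses the weak Leibniz rule for chain upper gradients (Proposition~\ref{prop:Leibniz-rule-chains}) in place of \cite[Proposition 7.1.35]{HKST15}; exactly as for the proof of Theorem~\ref{theo:density_energy_complete}, this reduction only descends to bounded functions with bounded support rather than nonnegative ones, which is the locality failure emphasised in Remark~\ref{rem:comments_between_CC_EB}.

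The genuinely new ingredient relative to Theorem~\ref{theo:density_energy_complete} is the care required by the fact that $g$, and hence $g_\eta$, may take the value $+\infty$ on a $\mm$-null set. The key adjustment is in the selection of the compact sets $K_i$ in the construction of $g_\eta$: I insist that $K_i\subseteq\{g<\infty\}$, which is possible because $g\in L^p(\X)$ forces $\mm(\{g=\infty\})=0$, so that by inner regularity of $\mm$ one can still arrange continuity of $u|_{K_i}$ together with $\mm(B_{2R}(x_0)\setminus K_i)^{1/p}\le 2^{-i}\eta$. With this choice, $g_\eta$ and hence every $\hat{g}_{\eta,j}$ are finite on $K_N$. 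I further take the continuity set to be $A:=K_N\cup(\X\setminus B_{2R}(x_0))$ instead of $K_N\cup(\X\setminus B_R(x_0))$: then $A$ is closed, $u|_A$ is continuous because its two pieces have positive distance, the chain-infimum construction and the Step~6 cutoff apply without essential change, and crucially $A\subseteq\{g_\eta<\infty\}$ since $g_\eta\equiv 0$ on $\X\setminus B_{2R}(x_0)$.

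The main obstacle is the contradiction argument replacing Step~8 of the proof of Theorem~\ref{theo:density_energy_complete}. Assuming $\hat{u}_{\eta,j}(x)\not\to u(x)$ for some $x\in K_N$, I partition the offending chains into subchains $\sfc_j^s$ and $\sfc_j^e$ whose endpoints lie in $A$, and exactly as in that proof I analyse the three cases corresponding to the position of $\alpha(\sfc_j)$ with respect to the support of $u$; in each case Proposition~\ref{prop:Sylvester_compactness_chains} (together with Remark~\ref{rmk:compactness_chains_with_lambda}) extracts a subchain converging to a curve $\gamma$, and Lemma~\ref{lemma:lower-semicontinuity-integral-chains-convergence} transports the chain integral estimate to $\int_\gamma g_\eta$. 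The new twist is that, since $g_\eta$ is only a chain upper gradient taking extended values, the upper gradient inequality along $\gamma$ is \emph{not} automatic from $g_\eta\in\UG{\varepsilon}(u)$; it is available via Proposition~\ref{prop:UGeta_subset_UG} only when $g_\eta(\alpha(\gamma)), g_\eta(\omega(\gamma))<\infty$. Closedness of $A$ gives $\alpha(\gamma),\omega(\gamma)\in A$, and the engineered inclusion $A\subseteq\{g_\eta<\infty\}$ furnishes the needed finiteness, letting the contradiction go through in each of the three cases. The analogous modification of Step~9 proves uniform convergence of $\hat{u}_{\eta,j}$ to $0$ outside $B_{2R}(x_0)$, and the rest of the argument (norm convergence of $u_{\eta,j}\to u$ and of $g_{\eta,j}\to g_\eta$ in $L^p(\X)$) then concludes that $\relF{\Chain{},\Lip}(u)\le\|g\|_{L^p(\X)}+2\eta$, giving $\relF{\Chain{},\Lip}(u)\le \F{\Chain{}}(u)$ after taking infima and sending $\eta\to 0$.
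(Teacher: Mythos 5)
Your proposal follows essentially the same route as the paper's proof: reduce via the Leibniz rule (Proposition \ref{prop:Leibniz-rule-chains}) to bounded functions with bounded support, run the Eriksson-Bique-type scheme of Theorem \ref{theo:density_energy_complete}, and, in the contradiction steps, replace the plain upper gradient inequality by Proposition \ref{prop:UGeta_subset_UG}, which forces the anchor set $A$ to be engineered inside $\{g_\eta<+\infty\}$. The one genuine divergence is \emph{how} you arrange $A\subseteq\{g_\eta<+\infty\}$: the paper keeps $A=(K_N\cup(\X\setminus B_R(x_0)))\setminus U_\eta$ with $U_\eta$ an open neighbourhood of $\{g_\eta=+\infty\}$ of small measure obtained by outer regularity (paying an extra $(2M)\,\mm(U_\eta)^{1/p}$ in the $L^p$ estimate of Step 7, whence the $2\eta$), whereas you push the Lusin sets into $\{g<\infty\}$ by inner regularity and replace $\X\setminus B_R(x_0)$ by $\X\setminus B_{2R}(x_0)$. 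Both devices work; yours even keeps pointwise convergence on all of $K_N$, so your final bound should be $\eta$ rather than $2\eta$. The one place where ``exactly as in that proof'' is too quick is Step 8: with your $A$ the annulus $B_{2R}(x_0)\setminus B_R(x_0)$ is no longer (mostly) contained in $A$, so the subchain endpoints $\omega(\sfc_j^s)$ and $\alpha(\sfc_j^e)$, which in general only satisfy $\sfd(\cdot,x_0)\ge 2R-\frac{2}{j}$, need not lie in $A$; the original argument uses their membership in $A$ both for the diameter lower bound and to pass $u$ to the limit endpoint. You must argue separately (as the paper itself does for the points of $U_\eta$) that these points lie outside $B_R(x_0)$, so $u$ vanishes there and at the limit point, which moreover lies in $\X\setminus B_{2R}(x_0)\subseteq A$ where $g_\eta\equiv 0$; and that in case (1) one has $u(x)>\delta/2>0$, hence $x\in B_R(x_0)$ and ${\rm Diam}(\sfc_j^e)\ge 2R-\frac{2}{j}-R\ge 1$ whenever $\alpha(\sfc_j^e)\neq\alpha(\sfc_j)$. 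These patches are routine and at the same level of detail as the paper's own exposition, so I regard the proposal as correct.
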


Before doing that we need a version of the Leibniz rule for chain upper gradients.
\begin{proposition}[Leibniz rule]
\label{prop:Leibniz-rule-chains}
    Let $u\colon \X \to \R$ be Borel and $\varphi \in {\rm Lip}(\X)$.
    For every $g \in {\rm UG}^\varepsilon(u)$, we have
    \begin{equation}
    \label{eq:varepsilon_upper_gradient_Leibniz}
        \vert u \vert\, {\rm sl}_\varepsilon \varphi + Q_\varepsilon \varphi\, g \in {\rm UG}^\varepsilon(u \varphi),
    \end{equation}
    where $Q_\varepsilon\varphi(x):= \sup_{y\in \overline{B}_\varepsilon(x)} \vert \varphi \vert(y)$. In particular, for every $u \in \mathcal{L}^p(\X)$ it holds that
    \begin{equation}
    \label{eq:Leibniz_rule}
        \F{\Chain{}}(u\varphi) \le \left( \int |u|^p (\lip\,\varphi)^p \,\d \mm \right)^{\frac{1}{p}} + \|\varphi\|_{L^\infty(\X)}\, \F{\Chain{}}(u).
    \end{equation}

\end{proposition}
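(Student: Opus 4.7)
The first statement reduces to the case of a two-point chain. By Remark \ref{rem:integral_along_chains_of_length_2} it suffices to verify, for every $x,y \in \X$ with $\sfd(x,y) \le \varepsilon$, that
\begin{equation}
|u(y)\varphi(y) - u(x)\varphi(x)| \le \frac{(|u|\,\sl{\varepsilon}\varphi + Q_\varepsilon\varphi \cdot g)(x) + (|u|\,\sl{\varepsilon}\varphi + Q_\varepsilon\varphi \cdot g)(y)}{2}\,\sfd(x,y).
\end{equation}
Starting from the symmetric Leibniz identity
\begin{equation}
u(y)\varphi(y) - u(x)\varphi(x) = \frac{u(x)+u(y)}{2}\bigl(\varphi(y)-\varphi(x)\bigr) + \frac{\varphi(x)+\varphi(y)}{2}\bigl(u(y)-u(x)\bigr),
\end{equation}
I would triangle-inequality separately the two terms.

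For the first summand, observe that $|\varphi(y)-\varphi(x)| \le \min\{\sl{\varepsilon}\varphi(x),\sl{\varepsilon}\varphi(y)\}\sfd(x,y)$. The elementary inequality $\min\{a,b\}(c+d) \le ac+bd$, valid for $a,b,c,d \ge 0$ (WLOG $a \le b$, then $a(c+d) = ac + ad \le ac + bd$), applied with $a,b = \sl{\varepsilon}\varphi(x),\sl{\varepsilon}\varphi(y)$ and $c,d = |u(x)|,|u(y)|$, yields
\begin{equation}
\frac{|u(x)|+|u(y)|}{2}|\varphi(y)-\varphi(x)| \le \frac{|u(x)|\,\sl{\varepsilon}\varphi(x) + |u(y)|\,\sl{\varepsilon}\varphi(y)}{2}\,\sfd(x,y).
\end{equation}
For the second summand, from $\sfd(x,y) \le \varepsilon$ we get $|\varphi(x)|, |\varphi(y)| \le \min\{Q_\varepsilon\varphi(x), Q_\varepsilon\varphi(y)\}$; combining with $g \in \UG{\varepsilon}(u)$ (which controls $|u(y)-u(x)|$ via Remark \ref{rem:integral_along_chains_of_length_2}) and applying the same elementary inequality with $a,b = Q_\varepsilon\varphi(x),Q_\varepsilon\varphi(y)$ and $c,d=g(x),g(y)$ produces the matching bound
\begin{equation}
\frac{|\varphi(x)|+|\varphi(y)|}{2}|u(y)-u(x)| \le \frac{Q_\varepsilon\varphi(x)\,g(x) + Q_\varepsilon\varphi(y)\,g(y)}{2}\,\sfd(x,y).
\end{equation}
Summing the two bounds gives \eqref{eq:varepsilon_upper_gradient_Leibniz}.

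For the energy inequality \eqref{eq:Leibniz_rule}, I would fix $\varepsilon>0$ and an arbitrary $g \in \UG{\varepsilon}(u)$, use part one together with the $L^p$-triangle inequality and the crude bound $\|Q_\varepsilon\varphi\|_{L^\infty(\X)} \le \|\varphi\|_{L^\infty(\X)}$ to obtain
\begin{equation}
\inf\bigl\{\|h\|_{L^p(\X)} : h \in \UG{\varepsilon}(u\varphi)\bigr\} \le \bigl\||u|\,\sl{\varepsilon}\varphi\bigr\|_{L^p(\X)} + \|\varphi\|_{L^\infty(\X)}\,\|g\|_{L^p(\X)},
\end{equation}
and then take the infimum over $g$. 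Letting $\varepsilon \searrow 0$, monotone/dominated convergence (using $\sl{\varepsilon}\varphi \searrow \lip\varphi$ pointwise, dominated by the global Lipschitz constant of $\varphi$ times $|u| \in L^p$) converts the first term into $(\int |u|^p(\lip\varphi)^p\,\d\mm)^{1/p}$, while the second term converges to $\|\varphi\|_{L^\infty(\X)}\F{\Chain{}}(u)$ by definition of $\F{\Chain{}}$.

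The proof is largely a direct computation; the only non-obvious ingredient is the elementary inequality $\min\{a,b\}(c+d) \le ac+bd$, which is exactly what turns symmetric pointwise bounds on $\varphi(y)-\varphi(x)$ and the $\varphi$-factor into the asymmetric form demanded by the symmetric $\frac{1}{2}$-integral $\int_\sfc$. No substantial obstacle is expected.
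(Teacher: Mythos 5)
Your proposal is correct and follows essentially the same route as the paper: the symmetric Leibniz identity you write down is exactly the paper's add-and-subtract decomposition of $u(q_{i+1})\varphi(q_{i+1})-u(q_i)\varphi(q_i)$, the ``$\min\{a,b\}(c+d)\le ac+bd$'' step is the same pairing of each factor with the slope (resp.\ $Q_\varepsilon\varphi$) evaluated at the same point, and the passage to \eqref{eq:Leibniz_rule} via the $L^p$-triangle inequality, $\|Q_\varepsilon\varphi\|_{L^\infty}\le\|\varphi\|_{L^\infty}$ and dominated convergence as $\varepsilon\to 0$ is identical. The only cosmetic difference is that you reduce to two-point chains via Remark \ref{rem:integral_along_chains_of_length_2} while the paper sums over consecutive pairs of a general chain, which amounts to the same thing.
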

\begin{proof}
  Let $g \in {\rm UG}^\varepsilon(u)$. We verify \eqref{eq:varepsilon_upper_gradient_Leibniz}. Given $\sfc =\{ q_i\}_{i=0}^N \in \Chain{\varepsilon}$, we compute
  {\allowdisplaybreaks
  \begin{align}
      |&(u \varphi)(\omega(\sfc))- (u \varphi)(\alpha(\sfc))|  \le \sum_{i=0}^{N-1} |(u \varphi)(q_{i+1})- (u \varphi)(q_{i})|\\
      & \le \sum_{i=0}^{N-1} \bigg| u(q_{i+1})\varphi(q_{i+1}) -\frac{1}{2} u(q_{i+1})\varphi(q_{i}) + \frac{1}{2} u(q_{i+1})\varphi(q_{i}) \\
      &\qquad\quad-\frac{1}{2}u(q_{i}) \varphi(q_{i+1})+ \frac{1}{2}u(q_{i}) \varphi(q_{i+1}) - u(q_{i}) \varphi(q_{i}) \bigg| \\
      &\le \sum_{i=0}^{N-1} \left( \frac{1}{2}\vert u(q_{i+1}) \vert{\rm sl}_\varepsilon \varphi(q_{i+1}) + \frac{1}{2}\vert u(q_{i}) \vert{\rm sl}_\varepsilon \varphi(q_{i}) \right)\,\sfd(q_i,q_{i+1})+ \frac{1}{2}\vert \varphi(q_{i+1})\vert \vert u(q_{i+1})-u(q_{i}) \vert\\
      &\qquad\quad+ \frac{1}{2}\vert \varphi(q_{i}) \vert \vert u(q_{i+1})-u(q_{i}) \vert \\
      & \le \int_{\sfc} \vert u \vert\,{\rm sl}_{\varepsilon} \varphi + \sum_{i=0}^{N-1} \frac{1}{4} (\vert \varphi(q_{i})\vert + \vert\varphi(q_{i+1})\vert)(g(q_i)+g(q_{i+1}))\sfd(q_i,q_{i+1})\\
      & \le \int_{\sfc} \vert u \vert\,{\rm sl}_{\varepsilon} \varphi + \sum_{i=0}^{N-1} \frac{(Q_\varepsilon \varphi\, g)(q_i) + (Q_\varepsilon \varphi\, g)(q_{i+1})}{2}  \sfd(q_i,q_{i+1}) = \int_{\sfc} (\vert u \vert \,{\rm sl}_{\varepsilon} \varphi + Q_\varepsilon \varphi\, g).
  \end{align}
  }
  Now, to estimate $\F{\Chain{}}(\varphi\, u)$, we compute
  \begin{equation*}
  \begin{aligned}
      \inf \{ \|h\|_{L^p(\X)}:\, h \in {\rm UG}^\varepsilon(u \varphi)\} &\le \inf\{ \| |u|\,{\rm sl}_{\varepsilon} \varphi + Q_\varepsilon \varphi\, g \|_{L^p(\X)}\,:\, g\in \UG{\varepsilon}(u) \} \\
      &\le \inf\{ \| |u| \,{\rm sl}_{\varepsilon} \varphi \|_{L^p(\X)} + \| Q_\varepsilon \varphi\, g \|_{L^p(\X)}\,:\, g\in \UG{\varepsilon}(u) \}\\
      & \le \left(\int |u|^p\,\sl{\varepsilon}\varphi^p\,\d \mm\right)^{\frac{1}{p}} + \| \varphi \|_{L^\infty(\X)} \inf\{ \|g\|_{L^p(\X)}:\, g \in {\rm UG}^\varepsilon(u) \}\\
   \end{aligned}
  \end{equation*}
  where we used that $\|Q_\varepsilon \varphi\|_{L^\infty(\X)} =\|\varphi\|_{L^\infty(\X)}$.
  By taking the limit as $\varepsilon \to 0$ and using the fact that $\varphi$ is Lipschitz, dominated convergence and the definition of $\F{\Chain{}}(\cdot)$, we get the conclusion.
\end{proof}

\begin{proof}[Proof of Theorem \ref{theo:density-energy-chains-complete}]
    The proof is a modification of the one of Theorem \ref{theo:density_energy_complete}. We highlight what are the differences in each step.
    
    {\bf Step 1.} The proof does not change if we show that for every $u\in L^p(\X)$ we can find a sequence of bounded functions $u_j$ with bounded support such that $u_j \to u$ in $L^p(\X)$ and $\limi_{j\to +\infty}\F{\Chain{}}(u_j) \le \F{\Chain{}}(u)$. We fix a basepoint $x_0 \in \X$ and we consider the $1$-Lipschitz function $\varphi_j(x) = \max\{0,\min\{1,j+1-\sfd(x,x_0)\}\}$. We define $u_j := \min \{j, \max\{-j, \varphi_j u\}\}$. By definition, $u_j$ is bounded and has bounded support. Moreover, every $\varepsilon$-upper gradient of $\varphi_j u$ is also a $\varepsilon$-upper gradient of $u_j$. This implies that $\F{\Chain{}}(u_j) \le \F{\Chain{}}(\varphi_j u)$ for every $j$. Proposition \ref{prop:Leibniz-rule-chains} implies that
    \begin{equation}
        \begin{aligned}
            \limi_{j\to +\infty} \F{\Chain{}}(u_j) \le \limi_{j\to +\infty} \F{\Chain{}}(\varphi_j u) &\le \limi_{j\to +\infty}\left( \int |u|^p (\lip\,\varphi_j)^p \,\d \mm \right)^{\frac{1}{p}} + \|\varphi_j\|_{L^\infty}\, \F{\Chain{}}(u)\\
            &= \limi_{j\to +\infty}\left( \int_{\overline{B}_{j+1}(x_0)\setminus B_j(x_0)} |u|^p \,\d \mm \right)^{\frac{1}{p}} +\F{\Chain{}}(u) = \F{\Chain{}}(u),
        \end{aligned}
    \end{equation}
    where in the last equality we used that $u\in L^p(\X)$.
    
    {\bf Step 2.} It does not change. In particular the claim we have to prove is the following. For every $\varepsilon > 0$, for every $\varepsilon$-upper gradient $g\in \UG{\varepsilon}(u)$ and for every $\eta > 0$ there exists another $\varepsilon$-upper gradient $g_\eta \in \UG{\varepsilon}(u)$ such that 
    \begin{equation}
        \label{eq:approx_with_g_eta_chains}
        \Vert g - g_\eta \Vert_{L^p(\X)} < \eta
    \end{equation} 
    and with the following property. For every $j\in \N$ there exist functions $u_{\eta,j} \colon \X \to \R$ and $g_{\eta,j}\in \LUG{\frac{1}{j}}(u_{\eta,j})$ such that 
    \begin{equation}
        \label{eq:approx_with_u_eta,j_chain}
        \lims_{j\to +\infty} \Vert u_{\eta,j} - u \Vert_{L^p(\X)} \le 2\eta \quad \text{and} \quad \lim_{j\to +\infty} \Vert g_{\eta,j} - g_\eta \Vert_{L^p(\X)} = 0.
    \end{equation}
    Let $R\ge 1$ be such that $u\equiv 0$ on $\X\setminus B_{R}(x_0)$. We recall that it is enough to consider chain upper gradients $g\in \UG{\varepsilon}(u)$ that are lower semicontinuous and such that $g \equiv 0$ on $\X \setminus B_{2R}(x_0)$, by a truncation argument.

    {\bf Step 3.} The definition of $g_\eta$ does not change. Observe that $g_\eta \equiv 0$ on $\X \setminus B_{2R}(x_0)$ and that $g_\eta \in \UG{\varepsilon}(u)$ because $g_\eta \ge g$.
    
    {\bf Step 4.} The definition of $\hat{g}_{\eta,j}$ does not change and satisfies the same properties.
    
    {\bf Step 5.} Here there is a difference in the definition of the set $A$. Since $g_\eta \in L^p(\X)$ then $\mm(\{g_\eta = +\infty \})=0$. By outer regularity of the measure we can find an open set $U_\eta$ containing $\{g_\eta = +\infty \}$ and such that $\mm(U_\eta)^{\frac{1}{p}} < \eta (2M)^{-1}$. Moreover, since $g_\eta \equiv 0$ on $\X \setminus B_{2R}(x_0)$, we can choose $U_\eta$ such that $U_\eta \subseteq B_{2R}(x_0)$.
    Now we change the definition of the set $A$ by setting $A := (K_N \cup (\X \setminus (B_R(x_0))) \setminus U_\eta$. It is still closed and $u\restr{A}$ is still continuous.
    Now, the definition of $\hat{u}_{\eta,j}$ does not change, except for the fact that we use this set $A$. Properties (a)-(e) continue to hold.

    {\bf Step 6.} The definitions of $u_{\eta,j}$ and $g_{\eta,j}$ do not change.
    
    {\bf Step 7.} Here we claim that it is enough to show that $u_{\eta,j}(x)$ converges to $u(x)$ for every $x\in K_N\setminus U_\eta$ as $j \to +\infty$ and that $\hat{u}_{\eta,j}$ converges uniformly to $0$ on $\X\setminus B_{2R}(x_0)$. Indeed if this is true we have
    \begin{equation}
        \begin{aligned}
            &\lim_{j\to +\infty} \int_\X \Vert u - u_{\eta,j} \Vert_{L^p(\X)} \,\d\mm \\
            &\le\lim_{j\to +\infty}\left(\int_{K_N \setminus U_\eta} \vert u - u_{\eta,j} \vert^p \,\d\mm + \int_{(B_{2R}(x_0)\setminus K_N) \cup U_\eta} \vert u - u_{\eta,j} \vert^p \,\d\mm + \int_{(\X \setminus B_{2R}(x_0))} \vert u - u_{\eta,j} \vert^p \,\d\mm \right)^{\frac{1}{p}}\\
            &\le 0 + (2M) \mm(B_{2R}(x_0) \setminus K_N)^{\frac{1}{p}} + (2M)\mm(U_\eta)^{\frac{1}{p}} + 0 \le 2\eta.
        \end{aligned}
    \end{equation}

    {\bf Step 8.} Here we need an additional argument that justifies the different choice of $A$. Indeed, we claim that in any of three cases, the limit curve has its extreme points in $A$.
    
    In cases (1) and (3) this is true because either $\alpha(\sfc_j^e) = \alpha(\sfc_j) \in A$ by definition, or $\sfd(\alpha(\sfc_j^e), \X \setminus B_{2R}(x_0)) \le \frac{2}{j}$, by maximality of $\sfc_j^e$. In the first case the result is trivial because $A$ is closed and $u\restr{A}$ is continuous. In the second case $\alpha(\gamma) = \lim_{j\to +\infty} \alpha(\sfc_j^e) \in \X \setminus B_{2R}(x_0)$, so $\alpha(\gamma)\in A$ since $U_\eta \subseteq B_{2R}(x_0)$. Moreover, $u(\alpha(\gamma)) = 0 = u(\alpha(\sfc_j^e))$ because all these points belong to $\X \setminus B_R(x_0)$. On the other hand $\omega(\gamma) = \lim_{j\to +\infty} \omega(\sfc_j^e) = x \in A$ because $x$ is chosen in $K_N\setminus U_\eta$. Here, we have $u(\omega(\gamma)) = u(x) = u(\omega(\sfc_j^e))$.   
    
    In case (2) we have that $\alpha(\gamma) = \lim_{j\to +\infty} \alpha(\sfc_j^s) \in A$, because $\alpha(\sfc_j^s) \in A$ for every $j$ and $A$ is closed. Moreover, $u(\alpha(\gamma)) = \lim_{j\to +\infty} u(\alpha(\sfc_j^s))$ since $u\restr{A}$ is continuous. On the other hand, either $\omega(\sfc_j^s) = x$ for every $j$, so $\omega(\gamma) = x \in A$ and $u(\omega(\gamma)) = u(x) = u(\omega(\sfc_j^s))$, or $\sfd(\omega(\sfc_j^s), \X \setminus B_{2R}(x_0)) \le \frac{2}{j}$, by maximality of $\sfc_j^s$. Arguing as before, we get that $\omega(\gamma) \in \X \setminus B_{2R}(x_0)$, so it belongs to $A$ and $u(\omega(\gamma)) = 0 = u(\omega(\sfc_j^s))$. 
    
    In every case, the extreme points of $\gamma$ belong to the set $\{g_\eta < +\infty\}$. Hence $g_\eta$ satisfies the upper gradient inequality along $\gamma$ because of Proposition \ref{prop:UGeta_subset_UG}, while the proof shows that this is not the case, giving a contradiction.

    {\bf Step 9.} The proof does not change, using the same modifications we did in Step 8.
\end{proof}
    The combination of Theorems \ref{theo:density_energy_complete}, \ref{theo:density-energy-chains-complete} and Proposition \ref{prop:AGS=curve_complete} gives the proof of Theorem \ref{theo-intro:equivalences-of-all-norms-complete-spaces}.

The next theorem states that the two spaces defined via chains do not change if we take the completion.
\begin{theorem}
\label{theo:natural_isometries_chains}
    Let $(\X,\sfd,\mm)$ be a metric measure space and let $(\bar{\X}, \bar{\sfd}, \bar{\mm})$ be its completion. Then the identity map $\iota\colon L^p(\X) \to L^p(\bar{\X})$ induces isometries between $\HH{p}{\Chain{}, \, \Lip}(\X)$ and $\HH{p}{\Chain{}, \, \Lip}(\bar{\X})$ and between $\HH{p}{\Chain{}}(\X)$ and $\HH{p}{\Chain{}}(\bar{\X})$.
\end{theorem}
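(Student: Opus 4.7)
The plan is to prove the two isometries separately; each reduces to showing that the underlying non-relaxed functional is invariant under $\iota$, after which the isometry of the relaxations follows verbatim as in the proof of Proposition~\ref{prop:AGS_completion}. Throughout I would exploit two features: every $\varepsilon$-chain of $\X$ is, verbatim, an $\varepsilon$-chain of $\bar{\X}$ (distances agree), and $\bar{\mm}$ is concentrated on $\X$, so $L^p$-norms are preserved under $\iota$ and under restriction.

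For the Lipschitz case I would first treat $u\in\Lip(\X)$ together with its unique Lipschitz extension $\bar{u}\in\Lip(\bar{\X})$, and similarly extend any $g\in\LUG{\varepsilon}(u)$ to $\bar{g}\in\Lip(\bar{\X})$, noting $\|\bar{g}\|_{L^p(\bar{\X})}=\|g\|_{L^p(\X)}$. Fixing $\varepsilon'<\varepsilon$ and any $\bar{\sfc}=\{\bar{q}_i\}\in\Chain{\varepsilon'}(\bar{\X})$, I would pick $q_i^k\in\X$ with $\bar{\sfd}(q_i^k,\bar{q}_i)\to 0$ so that $\{q_i^k\}\in\Chain{\varepsilon}(\X)$ for all large $k$; applying the upper gradient inequality of $g$ there and letting $k\to\infty$ using continuity of $\bar{u}$ and $\bar{g}$ gives $\bar{g}\in\UG{\varepsilon'}(\bar{u})$. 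Together with the trivial restriction direction ($\bar{g}\in\LUG{\varepsilon}(\bar{u})$ restricts to $\LUG{\varepsilon}(u)$), and then letting $\varepsilon\to 0$, this yields $\F{\Chain{},\,\Lip}^{\X}(u)=\F{\Chain{},\,\Lip}^{\bar{\X}}(\bar{u})$. The standard relaxation step then transfers the identity to $\relF{\Chain{},\,\Lip}$, since a Lipschitz sequence approximating $u$ in $L^p(\X)$ extends to one approximating $\iota(u)$ in $L^p(\bar{\X})$, and restriction reverses this.

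For the general case my strategy is a brute extension by $+\infty$. Since $\bar{\mm}$ is Radon and $\bar{\mm}(\bar{\X}\setminus\X)=0$ as an outer measure, I would fix a Borel set $N\subseteq\bar{\X}$ containing $\bar{\X}\setminus\X$ with $\bar{\mm}(N)=0$. Given a Borel representative $u$ of a class in $L^p(\X)$ and $g\in\UG{\varepsilon}(u)$, set $\bar{u}:=u$ on $\bar{\X}\setminus N\subseteq\X$ and $\bar{u}:=0$ on $N$, and $\bar{g}:=g$ on $\bar{\X}\setminus N$ and $\bar{g}:=+\infty$ on $N$. Both are Borel, $\|\bar{g}\|_{L^p(\bar{\X})}=\|g\|_{L^p(\X)}$, and $\bar{u}$ is a representative of $\iota(u)$. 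The claim $\bar{g}\in\UG{\varepsilon}(\bar{u})$ follows by a short case analysis: a chain in $\Chain{\varepsilon}(\bar{\X})$ lying entirely in $\bar{\X}\setminus N\subseteq\X$ gets the inequality for free from $g\in\UG{\varepsilon}(u)$; a chain meeting $N$ is either constantly equal to a point of $N$ (both sides zero) or contains a positive-length step with an endpoint in $N$, forcing $\int_{\bar{\sfc}}\bar{g}=+\infty$ so the inequality is trivial. The converse map $\bar{g}\mapsto\bar{g}|_{\X}$ is immediate, hence $\F{\Chain{}}^{\bar{\X}}(\iota(u))=\F{\Chain{}}^{\X}(u)$, and the relaxation transfers as before.

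The main obstacle is the mismatch in the Lipschitz step: the chain inequality for an $\varepsilon$-chain in $\bar{\X}$ cannot be recovered directly from an $\varepsilon$-chain in $\X$ because any $\X$-approximant may violate $\sfd\le\varepsilon$ by an arbitrarily small amount. The resolution is to test $\bar{g}$ only on strictly smaller $\varepsilon'$-chains, which is harmless since $\F{\Chain{},\,\Lip}$ is the limit as $\varepsilon\to 0$. The remaining technicalities — Borel measurability of $\bar{g}$ and $\bar{u}$, the fact that a Borel set $N$ as above exists by Borel regularity of $\bar{\mm}$, and the relaxation step itself — are routine and handled exactly as in Proposition~\ref{prop:AGS_completion}.
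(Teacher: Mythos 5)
Your proof is correct. The first isometry (the $\HH{p}{\Chain{},\,\Lip}$ case) follows the same route as the paper: extend $u$ and $g$ by Lipschitz continuity to $\bar{\X}$ and recover the chain inequality for $\varepsilon'$-chains of $\bar{\X}$ with $\varepsilon'<\varepsilon$ by approximating their points from $\X$ (the paper fixes $\varepsilon'=\varepsilon/2$; your ``any $\varepsilon'<\varepsilon$'' is the same idea, and the loss of scale is indeed harmless after the limit $\varepsilon\to 0$). For the second isometry your route is genuinely different. The paper goes through Section \ref{sec:weak_eps_upper_gradient}: by Proposition \ref{prop:WUG=closure-of-UG} the functional $\F{\Chain{}}$ may be computed over $p$-weak $\varepsilon$-upper gradients, and by Lemma \ref{lemma:measure_zero_implies_modulezero} the family $\Chain{}(\bar{\X}\setminus\X)$ is $\cModepsilon{\varepsilon}{p}$-null, so every $g\in\WUG{p}{\varepsilon}(u)\cap L^p(\X)$ is automatically a weak $\varepsilon$-upper gradient of any representative of $\iota(u)$. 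You instead manufacture a genuine $\varepsilon$-upper gradient on $\bar{\X}$ by setting $\bar{g}=+\infty$ on a Borel null set $N\supseteq\bar{\X}\setminus\X$ (which exists by Borel regularity of $\bar{\mm}$) and redefining $\bar{u}=0$ there; your dichotomy is sound, since a non-constant chain meeting $N$ must contain a positive-length step with an endpoint in $N$ and hence has infinite integral, while a constant chain gives $0\le 0$. Your argument is more elementary and self-contained: it bypasses the chain modulus entirely and in fact shows that $\inf\{\|g\|_{L^p}:g\in\UG{\varepsilon}(\cdot)\}$ is preserved at each fixed $\varepsilon$, not only in the limit, at the mild cost of changing $u$ on an $\mm$-null subset of $\X$ (harmless, as the relaxation only sees $L^p$-classes). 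The paper's version is shorter given the tools already in place, and the two are close in spirit — the proof of Proposition \ref{prop:WUG=closure-of-UG} itself adds to $g$ an $L^p$-small function that blows up on the exceptional chains, which is exactly the mechanism you implement explicitly.
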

\begin{proof}
    Let $u_0\in \mathcal{L}^p(\bar{\X})$ be any representative of $\iota(u)$.
    Since $\varepsilon$-chains in $\X$ are $\varepsilon$-chains in $\bar{\X}$, restrictions of elements in $\UG{\varepsilon}(u_0)$ belong to $\UG{\varepsilon}(u)$ and the property of being Lipschitz is preserved. Thus, $\HH{p}{\Chain{}}(\bar{\X})\subseteq \iota(\HH{p}{\Chain{}}(\X))$ and $\|u\|_{\HH{p}{\Chain{}}(\X)} \le \| \iota(u) \|_{\HH{p}{\Chain{}}(\bar{\X})}$ for every $u \in L^p(\X)$, and similarly $\HH{p}{\Chain{},\,\Lip}(\bar{\X})\subseteq \iota(\HH{p}{\Chain{},\,\Lip}(\X))$ and $\|u\|_{\HH{p}{\Chain{},\,\Lip}(\X)} \le \| \iota(u) \|_{\HH{p}{\Chain{},\,\Lip}(\bar{\X})}$ for every $u \in L^p(\X)$. 
    
    For the other inequality we proceed in two different ways. If $u \in L^p(\X) \cap \Lip(\X)$ and $g\in \LUG{\varepsilon}(u) \cap L^p(\X)$, then we consider the Lipschitz extensions $\bar{u}$, $\bar{g}$ of $u$ and $g$ on $\bar{X}$. We claim that $\bar{g}\in \LUG{\frac{\varepsilon}{2}}(\bar{u}) \cap L^p(\X)$. Indeed, given a chain $\sfc = \{q_i\}_{i=0}^N \in \Chain{\frac{\varepsilon}{2}}(\bar{\X})$ we can find sequence of chains $\sfc_j = \{q_i^j\}_{i=0}^N \in \Chain{\varepsilon}(\X)$ such that $q_i^j$ converges to $q_i$ for every $i=0,\ldots,N$ as $j\to +\infty$. By continuity of $\bar{g}$ and $\bar{u}$ we then have
    $$\bar{u}(\omega(\sfc)) - \bar{u}(\alpha(\sfc)) = \lim_{j\to +\infty} u(\omega(\sfc_j)) - u(\alpha(\sfc_j)) \le \lim_{j\to +\infty}\int_{\sfc_j} g = \int_{\sfc}\bar{g}.$$
    Therefore $\relF{\Chain{},\,\Lip}(\iota(u)) = \relF{\Chain{},\,\Lip}(\bar{u}) \le \F{\Chain{},\,\Lip}(u)$ for every $u\in L^p(\X)$. This is enough to conclude that $\iota(\HH{p}{\Chain{},\,\Lip}(\X)) \subseteq \HH{p}{\Chain{},\,\Lip}(\bar{\X})$ and $\|\iota(u) \|_{\HH{p}{\Chain{},\,\Lip}(\bar{\X})} \le \|u\|_{\HH{p}{\Chain{},\,\Lip}(\X)}$ for every $u \in L^p(\X)$.
    
    For the remaining inequality we recall that in the definition of $\F{\Chain{}}(u)$ the infimum of the $L^p(\X)$-norms can be taken among the $p$-weak $\varepsilon$-upper gradients of $u$. Moreover, since $\mm(\bar{\X}\setminus \X) = 0$, then $\cModepsilon{\varepsilon}{p}(\Chain{}(\bar{\X}\setminus \X)) = 0$. This means that every $g\in \WUG{p}{\varepsilon}(u) \cap L^p(\X)$ belongs also to $\WUG{p}{\varepsilon}(u_0) \cap L^p(\bar{\X})$, where $u_0\in \mathcal{L}^p(\bar{\X})$ is any representative of $\iota(u)$. Hence $\F{\Chain{}}(u_0) \le \F{\Chain{}}(u)$ for every $u\in L^p(\X)$. This implies that $\iota(\HH{p}{\Chain{}}(\X)) \subseteq \HH{p}{\Chain{}}(\bar{\X})$ and $\|\iota(u) \|_{\HH{p}{\Chain{}}(\bar{\X})} \le \|u\|_{\HH{p}{\Chain{}}(\X)}$ for every $u \in L^p(\X)$.
\end{proof}

\begin{reptheorem}{theo-intro:equivalences-of-AGS-Chain}
    Let $(\X,\sfd,\mm)$ be a metric measure space, possibly non-complete. Then
    \begin{equation}
        \HH{p}{\Chain{},\, \Lip}(\X) = \HH{p}{\Chain{}}(\X) = \HH{p}{AGS}(\X)
    \end{equation}
    and
    \begin{equation}
        \|u \|_{\HH{p}{\Chain{},\, \Lip}(\X)}  = \|u \|_{\HH{p}{\Chain{}}(\X)} = \|u \|_{\HH{p}{AGS}(\X)}
    \end{equation}
    for every $u\in L^p(\X)$.
\end{reptheorem}
\begin{proof}
    Direct consequence of Theorems \ref{theo-intro:equivalences-of-all-norms-complete-spaces}, \ref{theo:natural_isometries_chains} and Proposition \ref{prop:AGS_completion}.
\end{proof}
\subsection{Comments on the main results with the $\lambda$-integral}
\label{sec:comments_for_every_lambda}
        If one considers $(\varepsilon, \lambda)$-upper gradients instead of $\varepsilon$-upper gradients, for $\lambda \in [0,1]$, one defines natural variants of the functionals $\F{\Chain{}}$ and $\F{\Chain{},\,\Lip}$, denoted by $\F{\Chain{}}^\lambda$ and $\F{\Chain{},\,\Lip}^\lambda$. Their relaxations are $\relF{\Chain{}}^\lambda$ and $\relF{\Chain{},\,\Lip}^\lambda$. Let us outline some differences.
        
        For $\lambda \neq \frac{1}{2}$, the symmetric property in \eqref{eq:integral_is_symmetric} does not hold, see Remark \ref{rmk:lambda_integral_over_chains}. Therefore, it is not obvious that $\F{\Chain{}}^\lambda$ and $\F{\Chain{},\,\Lip}^\lambda$ satisfy property (c) of Section \ref{subsec:relaxation}. This is due to the fact that it is not true in general that if $g\in \UG{\varepsilon,\lambda}(u)$ then $g \in \UG{\varepsilon,\lambda}(-u)$ when $\lambda \neq \frac12$. However the same proofs of Theorems \ref{theo:density_energy_complete} and \ref{theo:density-energy-chains-complete} show that $\relF{\Chain{},\,\Lip}^\lambda(u) = \relF{curve}(u) = \relF{\Chain{}}^\lambda(u)$ for every $u\in L^p(\X)$, if $(\X,\sfd)$ is complete. In particular, a posteriori, $\relF{\Chain{}}^\lambda$ and $\relF{\Chain{},\Lip}^\lambda$ are seminorms, when $(\X,\sfd)$ is complete and the related Sobolev spaces are denoted by $\HH{p}{\Chain{}, \, \Lip,\lambda}$ and $\HH{p}{\Chain{},\lambda}$. There are some subtleties to be taken into consideration.

        First, the proof of Proposition \ref{prop:Leibniz-rule-chains} holds for every $\lambda \in [0,1]$ under the additional assumption that $\varphi \ge 0$, which is enough to perform Step 1 in the proof of Theorem \ref{theo:density-energy-chains-complete}. One can follows verbatim the same proof removing the absolute values except for the first term in the third line and replacing the first two $\frac12$-factors on the second line with $(1-\lambda)$ and the other two $\frac12$-factors with $\lambda$.
        
        Second, one can use Remark \ref{rmk:section_2_for_lambda} to arrive to a contradiction in Steps 8 and 9.
        
    \vspace{0.7cm}
    Theorem \ref{theo:natural_isometries_chains} holds also for the spaces $\HH{p}{\Chain{}, \, \Lip,\lambda}$ and $\HH{p}{\Chain{},\lambda}$, for every $\lambda \in [0,1]$. For $\HH{p}{\Chain{}, \, \Lip,\lambda}$ the proof is identical. Also for $\HH{p}{\Chain{},\lambda}$, when $\lambda \in (0,1)$, the proof is the same, in view of Remark \ref{rmk:chain-modulus-of-measure-zero-sets}. When $\lambda \in \{0,1\}$ one needs a different argument because it is no more true that $\cModepsilon{\varepsilon,1}{p}(\Chain{}(\bar{\X}\setminus \X) = 0$ and similarly for $\lambda = 0$, see Remark \ref{rmk:chain-modulus-of-measure-zero-sets}. We do it for the case $\lambda = 1$, the other being similar. We extend $u\in \mathcal{L}^p(\X)$ as $\bar{u}(z) := \lim_{r\to 0}\sup_{w\in B_r(z) \cap \X} u(w)$, for $z \in \bar{\X}\setminus \X$. Moreover, we extend every $g\in \UG{\varepsilon,1}(u)$ on $\X$ as $+\infty$ on $\bar{\X}\setminus \X$. We claim that $\bar{g}\in \UG{\frac{\varepsilon}{2},1}(\bar{u})$. Let $\sfc = \{q_i\}_{i=0}^N \in \Chain{\frac{\varepsilon}{2}}(\bar{\X})$. If there exists $i \in \{0,\ldots, N-1\}$ such that $q_i \in \bar{\X}\setminus \X$ then $\leftindex^1{\int}_{\sfc} g = +\infty$ and there is nothing to prove. Otherwise $q_i \in \X$ for every $i\in \{0,\ldots,N-1\}$. For every $w \in B_r(\omega(\sfc)) \cap \X$ we have that $\sfc_w := \{q_0,\ldots,q_{N-1},w\}$ is a $\varepsilon$-chain contained in $\X$ if $r<\frac{\varepsilon}{2}$. For every $0<r<\frac{\varepsilon}{2}$ we have
    \begin{equation}
        \begin{aligned}
            \bar{u}(\omega(\sfc)) - \bar{u}(\alpha(\sfc)) &\le \sup_{w\in B_r(\omega(\sfc)) \cap \X} u(w) - u(\alpha(\sfc)) \le \sup_{w\in B_r(\omega(\sfc)) \cap \X}\leftindex^1{\int}_{\sfc_w} g \\
            &= \sup_{w\in B_r(\omega(\sfc)) \cap \X} \sum_{i=0}^{N-2} g(q_i)\sfd(q_i,q_{i+1}) + g(q_{N-1})\sfd(q_{N-1},w) \\
            &\le \sum_{i=0}^{N-2} g(q_i)\sfd(q_i,q_{i+1}) + g(q_{N-1})(\sfd(q_{N-1},q_N) + r).
        \end{aligned}
    \end{equation}
    By taking $r\to 0$ on the right hand side we get $\bar{u}(\omega(\sfc) - \bar{u}(\alpha(\sfc)) \le \leftindex^1{\int}_{\sfc} \bar{g}$. This is enough to conclude the proof.


    As a consequence, the spaces $\HH{p}{\Chain{}, \lambda}(\X)$ are all isometric, for every possible value of $\lambda \in [0,1]$, even when $\X$ is not complete. The same holds for the spaces $\HH{p}{\Chain{},\,\Lip, \lambda}(\X)$.

\section{Poincar\'{e} inequality}
\label{sec:PI}
We recall the notion of the Poincaré inequality that we now discuss. Let $u,g\colon \X \to \R$ and let $p \ge 1$. We say that the couple $(u,g)$ satisfies a $p$-Poincaré inequality if there exists $\lambda, C \geq 1$ such that
\begin{equation}
    \label{eq:PI-u-g-integral}
    \dashint_{B_{r}(x)} \left\vert u - \dashint_{B_r(x)}u\,\d\mm\right\vert \,\d \mm \leq Cr\left(\dashint_{B_{\lambda r}(x)} g^p\,\d\mm\right)^{\frac{1}{p}}
\end{equation}
for every ball $B_r(x)\subseteq \X$. The following result is a consequence of Theorem \ref{theo-intro:equivalences-of-AGS-Chain} and Proposition \ref{prop:AGS_completion}.
\begin{corollary}
\label{cor:PI_lip=chain}
    Let $(\X,\sfd,\mm)$ be a metric measure space. Then it satisfies a $p$-Poincaré inequality for all couples $(u,\lip\,u)$, where $u\in \Lip(\X)$, if and only if it satisfies a $p$-Poincaré inequality for all couples $(u,g)$, where $u$ is Borel and $g\in \UG{\varepsilon}(u)$ for some $\varepsilon > 0$, with same constants. Moreover, this happens if and only if the metric completion $(\bar{\X},\bar{\sfd},\bar{\mm})$ satisfies a $p$-Poincaré inequality for couples $(u,\lip\,u)$, where $u\in \Lip(\bar{\X})$.
\end{corollary}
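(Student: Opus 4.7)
My plan is to prove the three-way equivalence (A)$\iff$(B)$\iff$(C), where (A) denotes PI for $(u,\lip u)$ with $u\in\Lip(\X)$, (B) denotes PI for $(u,g)$ with $g\in\UG{\varepsilon}(u)$ for some $\varepsilon>0$ on $\X$, and (C) denotes PI for $(u,\lip u)$ with $u\in\Lip(\bar\X)$. I would chain (B)$\Rightarrow$(A), (A)$\Leftrightarrow$(C), (C)$\Rightarrow$(B). The direction (B)$\Rightarrow$(A) is direct: by Lemma \ref{lemma:slopeeps_epsug} the slope $\sl{\varepsilon}u$ belongs to $\UG{\varepsilon}(u)$ for every Lipschitz $u$, so applying (B) with $g=\sl{\varepsilon}u$, using $\sl{\varepsilon}u\downarrow\lip u$ pointwise and the uniform bound $\sl{\varepsilon}u\le\Lip(u)$, dominated convergence on $B_{\lambda r}(x)$ delivers (A) with the same constants.

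The equivalence (A)$\Leftrightarrow$(C) rests on Proposition \ref{prop:AGS_completion}: $\Lip(\X)$ and $\Lip(\bar\X)$ are in bijective correspondence via extension/restriction, with matching local Lipschitz constants on $\X$, and $\bar\mm$ is concentrated on $\X$. For balls $B_r(\bar x)\subseteq\bar\X$ with $\bar x\in\X$ every integral reduces to an integral on $\X$ and the PI transfers verbatim. The only subtle case is (A)$\Rightarrow$(C) at $\bar x\in\bar\X\setminus\X$: I would approximate $\bar x$ by $x_n\in\X$, use that the set of radii $r$ with $\bar\mm(\partial B_r(\bar x))>0$ is countable, and observe that for the co-countable set of ``good'' radii the indicators $\chi_{B_r(x_n)}$ converge $\bar\mm$-a.e.\ to $\chi_{B_r(\bar x)}$. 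Dominated convergence then passes the PI at $x_n$ to the PI at $\bar x$, with exceptional radii handled by monotone approximation from inside.

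The core of the argument is (C)$\Rightarrow$(B), which uses density in energy on the complete space $\bar\X$. Given Borel $u\colon\X\to\R$ and $g\in\UG{\varepsilon}(u)\cap L^p(\X)$, I would extend $(u,g)$ to $\bar\X$ as in Theorem \ref{theo:natural_isometries_chains} and apply the construction inside the proof of Theorem \ref{theo:density-energy-chains-complete}: for every $\eta>0$ there exist $u_j\in\Lip(\bar\X)$ and Lipschitz chain upper gradients $g_j\in\LUG{1/j}(u_j)$ with $\limsup_j\|u_j-u\|_{L^p(\bar\X)}\le 2\eta$ and $g_j\to g_\eta$ in $L^p(\bar\X)$ for some $g_\eta$ satisfying $\|g_\eta-g\|_{L^p(\bar\X)}<\eta$. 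The key observation is that since $g_j$ is Lipschitz, hence upper semicontinuous, the third assertion of Lemma \ref{lemma:slopeeps_epsug} gives the pointwise bound $\lip u_j\le g_j$. Applying (C) to $u_j$ thus yields a PI with $g_j$ on the right-hand side with unchanged constants $C,\lambda$. Taking $j\to\infty$ (exploiting $L^1$-convergence of the left-hand side on each fixed ball) and then $\eta\to 0$ recovers the PI for $(u,g)$ on $\bar\X$, which by restriction gives (B) on $\X$ since $\bar\mm|_\X=\mm$. The main obstacle is the careful double-limit bookkeeping of the $\eta$-errors in $u_j\to u$ and $g_j\to g$; the whole argument hinges on the fact that Theorem \ref{theo:density-energy-chains-complete} provides Lipschitz approximants satisfying the pointwise inequality $\lip u_j\le g_j$, precluding any degradation of the PI constants.
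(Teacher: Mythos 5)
Your decomposition into (B)$\Rightarrow$(A), (A)$\Leftrightarrow$(C), (C)$\Rightarrow$(B) agrees with the paper on the first two pieces: (B)$\Rightarrow$(A) is exactly the paper's argument ($\sl{\varepsilon}u\in\UG{\varepsilon}(u)$ by Lemma \ref{lemma:slopeeps_epsug} plus dominated convergence), and (A)$\Leftrightarrow$(C) is obtained from Proposition \ref{prop:AGS_completion}; your extra care with balls centred at points of $\bar{\X}\setminus\X$ fills in a detail the paper leaves implicit, and the sketch (a.e.\ radii via null spheres, exceptional radii by inner approximation) is sound.

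The gap is in the core direction. You launch the density-in-energy machinery \emph{globally} on $\bar{\X}$, starting from ``Borel $u$ and $g\in\UG{\varepsilon}(u)\cap L^p(\X)$''. But the corollary must be verified for every Borel $u$ and every $g\in\UG{\varepsilon}(u)$: neither $u$ nor $g$ is globally $p$-integrable, and the Poincar\'e inequality only ever sees them on one ball at a time (the only relevant finiteness is $g\in L^p(B_{\lambda r}(x))$, since otherwise the inequality on that ball is vacuous). Theorem \ref{theo:density-energy-chains-complete} and the relaxation $\relF{\Chain{},\,\Lip}$ require $u\in L^p(\bar{\X})$ and an $L^p(\bar{\X})$-integrable chain upper gradient, so for a merely locally integrable $u$ your argument cannot start, and truncating $u$ destroys the property $g\in\UG{\varepsilon}(u)$. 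The paper's fix --- which also makes the detour through the completion unnecessary for this direction --- is to apply Theorem \ref{theo-intro:equivalences-of-AGS-Chain} (valid without completeness) to the metric measure space $(B_{\lambda r}(x),\sfd,\mm)$ in its own right: chains contained in the ball are chains in $\X$, so $g\restr{B_{\lambda r}(x)}\in\UG{\varepsilon}(u\restr{B_{\lambda r}(x)})$ there; the theorem then gives $u_j\in\Lip(B_{\lambda r}(x))$ with $u_j\to u$ in $L^p(B_{\lambda r}(x))$ and $\limi_{j}\Vert\lip u_j\Vert_{L^p(B_{\lambda r}(x))}\le\Vert g\Vert_{L^p(B_{\lambda r}(x))}$, and one concludes by extending each $u_j$ to $\Lip(\X)$ and applying (A) to the fixed ball. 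Your observation that $\lip u_j\le g_j$ pointwise for Lipschitz chain upper gradients is correct but superfluous once one works with $\lip u_j$ directly; the double-limit bookkeeping you worry about is harmless on a fixed ball. If you keep your global route, you must first reduce to a single ball exactly as above.
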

\begin{proof}

    Let us consider the first equivalence. The if implication is trivial by Lemma \ref{lemma:slopeeps_epsug} and dominated convergence. The converse implication follows by applying Theorem \ref{theo-intro:equivalences-of-AGS-Chain} to the metric measure space $(B_{\lambda r}(x), \sfd,\mm)$. This gives a sequence $u_j \in {\rm Lip}(B_{\lambda r}(x))$ such that $u_j \to u$ in $L^p(B_{\lambda r}(x))$ and such that 
    $$\limi_{j\to +\infty} \| \lip\, u_j\|_{L^p(B_{\lambda r}(x))} =  \relF{\Chain{}}(u) \le \F{\Chain{}}(u) \le \| g\|_{L^p(B_{\lambda r}(x))}.$$
    Note that $\relF{\Chain{}}(u)$ and $\F{\Chain{}}(u)$ are defined on the metric measure space $(B_{\lambda r}(x), \sfd,\mm)$.
    To apply the hypothesis and conclude, we consider any Lipschitz extension $\tilde{u}_j \in \Lip(\X)$ of $u_j$. The last equivalence follows from Proposition \ref{prop:AGS_completion}.
\end{proof}
\begin{remark}
\label{rem:example_not_equivalence_poincare}
    The previous corollary is not true if we consider the $p$-Poincaré inequality for all couples $(u,g)$ with $u$ Borel and $g\in \UG{}(u)$.
    Indeed the metric measure space $(\X,\sfd,\mm) = ([0,1]\setminus \mathbb{Q},\sfd_e, \mathcal{L}^1 \restr{[0,1]\setminus \mathbb{Q}})$ satisfies the $1$-Poincaré inequality for all couples $(u,\lip\,u)$ with $u\in \Lip(\X)$, because of Corollary \ref{cor:PI_lip=chain}. However, it does not satisfy the $1$-Poincaré inequality for all couples $(u,g)$ with $u$ Borel and $g\in \UG{}(u)$. Indeed $g \equiv 0$ is an upper gradient of every function $u$.
\end{remark}
\begin{remark}
    As a consequence of Remark \ref{rmk:UG-UGLip} we have the following fact. A metric measure space $(\X,\sfd,\mm)$ such that $(\X,\sfd)$ is complete satisfies a $p$-Poincaré inequality with respect to couples $(u,g)$, $g\in \UG{}(u)$, if and only if it satisfies a $p$-Poincaré inequality with respect to couples $(u,g)$, $g\in \UG{}(u)$ with $u \in \Lip(\X)$ and $g\in \UG{}(u) \cap \Lip(\X)$, \emph{with same constants}.
    This result sharpens \cite[Theorem 2]{Kei03}, in which $\mm$ is required to be doubling and whose proof does not say that the constants of the Poincaré inequalities are the same, compare also with \cite[Theorem 8.4.1]{HKST15}.
\end{remark}
\subsection{Pointwise estimates with Riesz potential via chains}
\label{sec:pt_estimates_chains}
When the metric measure space is doubling, the Poincaré inequality is usually expressed in terms of pointwise estimates. We extend these classical results to our setting. In order to do that, we recall that, given a Borel function $u\colon \X \to \R$, a point $x\in \X$ is called a Lebesgue point of $u$ if $u(x) = \lim_{r\to 0} \dashint_{B_r(x)} u\,\d\mm$. The set of Lebesgue points of $u$ is denoted by ${\rm Leb}(u) \subseteq \X$. If $u\in L^p(\X)$ for some $1\le p < +\infty$ then $\mm(\X\setminus{\rm Leb}(u)) = 0$.
\begin{proposition}
\label{prop:PI_equiv_pointwise}
    Let $(\X,\sfd,\mm)$ be a doubling metric measure space. The following properties are quantitatively equivalent:
    \begin{itemize}
        \item[(i)] $\X$ satisfies a $p$-Poincaré inequality for all couples $(u,\lip\,u)$, with $u\in \Lip(\X)$;
        \item[(ii)] there exist $C >0$ and $L\ge 1$ such that for every Borel $u\colon \X \to \R$, for every $x,y\in {\rm Leb}(u)$, for every $\varepsilon > 0$ and for every $g\in \UG{\varepsilon}(u)$, it holds 
        \begin{equation}
            \label{eq:ptwise_PI_chain_UG}
            \vert u(x) - u(y) \vert ^p \le C\sfd(x,y)^{p-1}\int g^p \,\d\mm_{x,y}^L;
        \end{equation}
        \item[(iii)] there exist $C >0$ and $L\ge 1$ such that for every $u\in \Lip(\X)$ and for every $x,y\in \X$ it holds 
        \begin{equation}
            \label{eq:ptwise_PI_lip}
            \vert u(x) - u(y) \vert ^p \le C\sfd(x,y)^{p-1}\int (\lip\,u)^p \,\d\mm_{x,y}^L.
        \end{equation}
    \end{itemize}
\end{proposition}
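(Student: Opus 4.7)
The plan is to prove the cycle $(\text{ii}) \Rightarrow (\text{iii}) \Rightarrow (\text{i}) \Rightarrow (\text{ii})$. The crucial ingredient is Corollary \ref{cor:PI_lip=chain}, which upgrades (i) to the $p$-Poincaré inequality holding with the same constants for all couples $(u,g)$ with $u$ Borel and $g \in \UG{\varepsilon}(u)$ for some $\varepsilon>0$. This chain form of the PI is exactly what makes the Heinonen--Koskela telescoping argument go through on Borel functions, with no extra regularity or quasiconvexity hypothesis on $\X$.

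The two easy directions come first. For $(\text{ii}) \Rightarrow (\text{iii})$: if $u \in \Lip(\X)$ then every point is a Lebesgue point, and $\lip\, u \in \UG{\varepsilon}(u)$ for every $\varepsilon > 0$ by Lemma \ref{lemma:slopeeps_epsug}, so (ii) applied with $g = \lip\, u$ gives (iii). For $(\text{iii}) \Rightarrow (\text{i})$: take $u \in \Lip(\X)$, raise the pointwise estimate to the $p$-th power and average it over $(x,y) \in B_r(x_0) \times B_r(x_0)$; the left-hand side then dominates $\dashint_{B_r(x_0)} |u - u_{B_r(x_0)}|\,\d\mm$ by the triangle inequality and Jensen, while on the right-hand side Fubini, the fact that the truncated Riesz kernel is supported in the $L\sfd(x,y)$-balls around $x$ and $y$, and the doubling of $\mm$ collapse the double average to the PI expression comparable with $r^p \dashint_{B_{Lr}(x_0)} (\lip\, u)^p\,\d\mm$.

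The main step is $(\text{i}) \Rightarrow (\text{ii})$. Fix Borel $u$, $g \in \UG{\varepsilon}(u)$ for some $\varepsilon>0$, and Lebesgue points $x,y \in \X$ with $R := \sfd(x,y)$. Introduce the dyadic chains of balls $B_k^x := B_{2^{-k}R}(x)$ and $B_k^y := B_{2^{-k}R}(y)$ for $k \ge 0$, and a central ball $B^\ast$ of radius comparable to $R$ containing both. Write
\begin{equation*}
u(x) - u(y) = (u_{B_0^x} - u_{B^\ast}) + (u_{B^\ast} - u_{B_0^y}) + \sum_{k \ge 0} (u_{B_{k+1}^x} - u_{B_k^x}) - \sum_{k \ge 0} (u_{B_{k+1}^y} - u_{B_k^y}),
\end{equation*}
where the two telescoping tails converge in view of $x,y \in {\rm Leb}(u)$. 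By Corollary \ref{cor:PI_lip=chain}, the PI for chain upper gradients applies to $(u,g)$ on every ball regardless of scale, so each telescoping term obeys $|u_{B_{k+1}^x} - u_{B_k^x}| \lesssim 2^{-k}R\,\bigl(\dashint_{\lambda B_k^x} g^p\,\d\mm\bigr)^{1/p}$ and symmetrically near $y$, while the central pair is controlled by the same PI applied to $B^\ast$. Summing with Jensen to bring the $p$-th power inside, and then using doubling to recombine the dyadic weighted averages into an honest integral against the truncated Riesz kernel, produces exactly $C R^{p-1} \int g^p\,\d\mm_{x,y}^L$, which is (ii).

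The hard part will be this last recombination, where the sum of dyadic averages $\sum_k (2^{-k}R)^p \dashint_{\lambda B_k^x} g^p\,\d\mm$ must be re-expressed as a single integral against $\mm_{x,y}^L$ with a constant depending only on $p$, $\lambda$ and the doubling constant. What makes the chain approach superior to the curve-based argument of \cite[Theorem A.3]{CaputoCavallucci2024II} is precisely that Corollary \ref{cor:PI_lip=chain} delivers the PI directly for the given Borel pair $(u,g)$, with no need to pass through Lipschitz approximations of $u$ or to invoke local quasiconvexity in order to secure enough rectifiable connectedness on the small balls $B_k^x, B_k^y$; the inequality holds uniformly at every scale just from the doubling of $\mm$ and the existence of some $\varepsilon>0$ with $g \in \UG{\varepsilon}(u)$.
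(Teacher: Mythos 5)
Your cycle and the easy implications are broadly in the spirit of the paper, but the central step $(\mathrm{i})\Rightarrow(\mathrm{ii})$ has a genuine gap. A two-sided telescoping with the nested dyadic balls $B_k^x=B_{2^{-k}R}(x)$, $B_k^y=B_{2^{-k}R}(y)$ and one central ball does \emph{not} recombine into the truncated Riesz potential $\mm_{x,y}^L$ in a general doubling space. Test it on $(\R,|\cdot|,\mathcal{L}^1)$ with $p=1$: there $R_{x,y}^L\le 1$, while
\begin{equation}
\sum_{k\ge 0} 2^{-k}R\,\dashint_{B_{2^{-k}R}(x)} g\,\d\mm \;=\; \tfrac12\int g(z)\,\#\{k: |z-x|\le 2^{-k}R\}\,\d z \;\approx\; \int g(z)\log\frac{R}{|z-x|}\,\d z,
\end{equation}
so the weight your sum puts on $g$ near the pole $x$ is logarithmically larger than the kernel $R_{x,y}^L$; taking $g$ concentrated on a small neighbourhood of $x$ shows the claimed domination fails. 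The obstruction is structural: because the balls $B_k^x$ are nested and all contain the pole, a point $z$ is charged by \emph{every} scale $2^{-k}R\ge\sfd(x,z)$, and absorbing that sum into $R_{x,y}^L(z)$ requires a reverse-doubling estimate with exponent strictly larger than $1$ at the pole, which doubling alone does not give. The correct argument (Heinonen's) chains balls centred \emph{along a geodesic} from $x$ to $y$, each of radius comparable to its distance from the nearer pole, so that the dilated balls have bounded overlap and each one sits where the kernel has the right size. This is exactly why the paper, after invoking Corollary \ref{cor:PI_lip=chain} as you do, must pass to the completion $(\bar\X,\bar\sfd,\bar\mm)$, replace $\bar\sfd$ by a biLipschitz-equivalent geodesic metric, and use density of $\X$ in $\bar\X$ to run the proof of \cite[Theorem 9.5]{Hei01}. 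Your closing claim that the chain approach lets you dispense with quasiconvexity here misreads the paper: Remark \ref{rem:drop_quasiconvexity} concerns Proposition \ref{prop:pt_PI_equivalent_conditions_A_p_connectedness}, not the derivation of (ii) from (i), for which the geodesic structure (on the completion) is still the engine.

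There is also a smaller but real error in $(\mathrm{ii})\Rightarrow(\mathrm{iii})$: Lemma \ref{lemma:slopeeps_epsug} gives $\sl{\varepsilon}u\in\UG{\varepsilon}(u)$, \emph{not} $\lip u\in\UG{\varepsilon}(u)$. In general $\lip u$ fails the two-point inequality of Remark \ref{rem:integral_along_chains_of_length_2}: for $u(t)=\min\{\max\{t,0\},\varepsilon/2\}$ on $\R$ and the chain $\{-\varepsilon/4,\,3\varepsilon/4\}$ one has $|u(3\varepsilon/4)-u(-\varepsilon/4)|=\varepsilon/2$ while $\lip u$ vanishes at both chain points. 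The repair is the paper's: apply (ii) to $g=\sl{\varepsilon}u\in\UG{\varepsilon}(u)$ and let $\varepsilon\to 0$ by dominated convergence, using that $\sl{\varepsilon}u$ is bounded by the Lipschitz constant and that $\mm_{x,y}^L(\X)<+\infty$ by \eqref{eq:bound_on_m_x,y^L}. Your $(\mathrm{iii})\Rightarrow(\mathrm{i})$ averaging sketch is the standard route and is fine in outline.
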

The measure $\mm_{x,y}^L$ appearing in \eqref{eq:ptwise_PI_chain_UG} and \eqref{eq:ptwise_PI_lip} is defined as $R_{x,y}^L \mm$, where $R_{x,y}^L$ is the $L$-truncated Riesz potential with poles at $x,y$, namely
$$R_{x,y}^L(z) := \left(\frac{\sfd(x,z)}{\mm(B_{\sfd(x,z)}(x))} + \frac{\sfd(y,z)}{\mm(B_{\sfd(y,z)}(y))}\right) \chi_{B_{x,y}^L},$$
where $B_{x,y}^L = B_{L\sfd(x,y)}(x) \cup B_{L\sfd(x,y)}(y)$. At $x,y$ we impose by definition that $R_{x,y}^L(x) = R_{x,y}^L(y) = 0$.
If the measure $\mm$ is doubling
then $\mm_{x,y}^L(\X)$ is a finite measure, more precisely (see \cite[Proposition 2.3]{CaputoCavallucci2024}, whose proof does not use the completeness of $(\X,\sfd)$)
\begin{equation}
    \label{eq:bound_on_m_x,y^L}
    \mm_{x,y}^L(\X) \le 8C_DL\sfd(x,y).
\end{equation}
\begin{proof}[Proof of Proposition \ref{prop:PI_equiv_pointwise}]
    If (i) holds then $(\X,\sfd,\mm)$ satisfies a $p$-Poincaré inequality for all couples $(u,g)$ with $u$ Borel and $g\in \UG{\varepsilon}(u)$ for some $\varepsilon > 0$, by Corollary \ref{cor:PI_lip=chain}. Then (ii) can be proved as in \cite[Theorem 9.5]{Hei01}. Indeed two things are needed: that $x,y$ are Lebesgue points of $u$ and that the space $\X$ is geodesic. However, since $(\X,\sfd,\mm)$ satisfies (i) then also the completion $(\bar{\X},\bar{\sfd},\bar{\mm})$ satisfy (i). Therefore, after a biLipschitz change of the metric $\bar{\sfd}$, we can suppose that $\bar{\sfd}$ is geodesic. In general, $\sfd$ is not geodesic, but by density, there are points of $\X$ arbitrarily close to every point of a fixed geodesic of $\bar{\X}$. Therefore the proof of \cite[Theorem 9.5]{Hei01} can be easily adapted.

    Suppose (ii) holds and let $u\in \Lip(\X)$. We have ${\rm Leb}(u) = \X$, since $u$ is continuous. Moreover, by Lemma \ref{lemma:slopeeps_epsug}, $\sl{\varepsilon}u \in \UG{\varepsilon}(u)$ for every $\varepsilon > 0$. Therefore \eqref{eq:ptwise_PI_chain_UG} implies that $\vert u(x) - u(y) \vert^p \le C\sfd(x,y)^{p-1}\int (\sl{\varepsilon}u)^p \,\d\mm_{x,y}^L$ for every $x,y\in \X$ and every $\varepsilon > 0$. By dominated convergence, thanks to the fact that $\mm_{x,y}^L(\X) < +\infty$ by \eqref{eq:bound_on_m_x,y^L}, we get \eqref{eq:ptwise_PI_lip}, so (iii) holds.

    If (iii) holds then (i) holds by a combination of \cite[Theorem 9.5]{Hei01} and \cite[Theorem 8.1.7]{HKST15}.
\end{proof}
\begin{remark}
    The pointwise estimate of item (ii) cannot hold at every point. Indeed let $(\X,\sfd,\mm) = (\R,\sfd_e,\mathcal{L})$, $u=\chi_{0}$ and $g=+\infty\cdot\chi_0 \in \UG{\varepsilon}(u)$, for every $\varepsilon > 0$. If $x=0$ and $y = 1$ then $+\infty = \vert u(x)-u(y)\vert$, while $\int g^p\,\d\mm_{x,y}^L = 0$. This is in contrast with the case of upper gradients along curves when $(\X,\sfd)$ is complete. Indeed, even if a priori one gets the pointwise estimate with respect to every upper gradient only on the Lebesgue points of $u$, see \cite[Theorem 9.5]{Hei01} and \cite[Theorem 8.1.7]{HKST15}, in \cite[Theorem A.3]{CaputoCavallucci2024II} we showed that it actually holds everywhere.
\end{remark}
The pointwise estimate in item (ii) of Proposition \ref{prop:PI_equiv_pointwise} holds everywhere for chain upper gradients that assume finite values at $x$ and $y$. This is established in the next result.
\begin{proposition}
\label{prop:pt_PI_equivalent_conditions_A_p_connectedness}
    Let $(\X,\sfd,\mm)$ be a doubling metric measure space. Let $x,y\in \X$. The following properties are quantitatively equivalent:
    \begin{itemize}
        \item[(i)] there exist $C >0$ and $L\ge 1$ such that \eqref{eq:ptwise_PI_lip} holds for every $u\in \Lip(\X)$;
        \item[(ii)] there exist $C>0$ and $L\ge 1$ such that for every Borel function $g\colon \X\to [0,+\infty]$ with $g(x),g(y) < +\infty$ it holds
       \begin{equation}
        \label{eq:A_p-connectedness-limit}
            \lim_{\varepsilon \to 0} \inf_{\substack{\sfc \in \Chain{\varepsilon}_{x,y}\\ \ell(\sfc) \le C\sfd(x,y)}} \left(\int_{\sfc} g\right)^p \le C\sfd(x,y)^{p-1}\int g^p\,\d\mm_{x,y}^L.
        \end{equation}
        \item[(iii)] there exist $C >0$ and $L\ge 1$ such that \eqref{eq:ptwise_PI_chain_UG} holds for every Borel $u\colon \X \to \R$, for every $\varepsilon > 0$ and for every $g\in \UG{\varepsilon}(u)$ such that $g(x),g(y) < +\infty$;
    \end{itemize}
\end{proposition}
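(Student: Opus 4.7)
The two direct implications are $(iii)\Rightarrow(i)$ and $(ii)\Rightarrow(iii)$, each relying on Lemma \ref{lemma:slopeeps_epsug}. For $(iii)\Rightarrow(i)$ I would apply (iii) to the couple $(u,\sl{\varepsilon}u)$ for $u\in\Lip(\X)$, using that $\sl{\varepsilon}u\in\UG{\varepsilon}(u)$ with $\sl{\varepsilon}u(x),\sl{\varepsilon}u(y)\le\Lip(u)<\infty$, and then send $\varepsilon\to 0$ via dominated convergence; this is legitimate since $\mm_{x,y}^L$ is a finite measure by \eqref{eq:bound_on_m_x,y^L} and $\sl{\varepsilon}u\searrow\lip u$ pointwise uniformly dominated by $\Lip(u)$. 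For $(ii)\Rightarrow(iii)$, given $g\in\UG{\varepsilon}(u)$ with $g(x),g(y)<\infty$, the upper gradient inequality $|u(x)-u(y)|\le\int_\sfc g$ holds along every $\sfc\in\Chain{\varepsilon'}_{x,y}$ with $\varepsilon'\le\varepsilon$ and $\ell(\sfc)\le C\sfd(x,y)$; taking the infimum over such chains and letting $\varepsilon'\to 0$, hypothesis (ii) yields the desired estimate.

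The core of the proof is $(i)\Rightarrow(ii)$. Fix $g\ge 0$ Borel with $g(x),g(y)<\infty$ and assume $\int g^p\,\d\mm_{x,y}^L<\infty$, the remaining case being trivial. The strategy is to construct bounded Lipschitz majorants $g_k\ge g$ with $\int g_k^p\,\d\mm_{x,y}^L\to\int g^p\,\d\mm_{x,y}^L$, and then mimic Step 5 of the proof of Theorem \ref{theo:density_energy_complete}: for each $\varepsilon>0$ and $M>0$ set
\begin{equation*}
u_{k,M,\varepsilon}(z):=\min\Bigl\{M,\,\inf_{\sfc\in\Chain{\varepsilon}_{x,z}}\int_\sfc g_k\Bigr\}.
\end{equation*}
The verifications in items (b) and (c) of that step show that $u_{k,M,\varepsilon}\in\Lip(\X)$, that $u_{k,M,\varepsilon}(x)=0$ via the trivial chain $\{x\}$, and that $g_k\in\UG{\varepsilon}(u_{k,M,\varepsilon})$. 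Since $g_k$ is continuous, Lemma \ref{lemma:slopeeps_epsug} yields $\lip u_{k,M,\varepsilon}\le g_k$, and hypothesis (i) applied to $u_{k,M,\varepsilon}$ provides
\begin{equation*}
u_{k,M,\varepsilon}(y)^p\le C\sfd(x,y)^{p-1}\int g_k^p\,\d\mm_{x,y}^L.
\end{equation*}
Letting $M\to+\infty$ (the finiteness of the right-hand side preventing $u_{k,M,\varepsilon}(y)$ from growing indefinitely, and in particular forcing the existence of $\varepsilon$-chains from $x$ to $y$ for small $\varepsilon$), then using $\int_\sfc g_k\ge\int_\sfc g$ for every chain, and finally sending $k\to\infty$ and $\varepsilon\to 0$, concludes (ii).

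I expect two technical hurdles. The main one is the Lipschitz-from-above approximation $g_k\ge g$ with $\int g_k^p\,\d\mm_{x,y}^L\to\int g^p\,\d\mm_{x,y}^L$: classical inf-convolution produces Lipschitz approximations from below of lower semicontinuous functions, but here the reverse direction is required. My plan is to pass through Vitali-Carath\'eodory applied to $g^p$ to produce a lower semicontinuous majorant $v\ge g^p$ with $\int v\,\d\mm_{x,y}^L$ close to $\int g^p\,\d\mm_{x,y}^L$, adjust $v$ so that $\tilde g:=v^{1/p}$ has the correct finite values at $x,y$ while remaining pointwise $\ge g$, truncate at level $N\to\infty$, and apply the sup-convolution $\tilde g^{(k)}(z):=\sup_w(\tilde g(w)-k\sfd(z,w))$, which is $k$-Lipschitz, bounded and majorizes $\tilde g$. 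The second hurdle is the length constraint $\ell(\sfc)\le C\sfd(x,y)$ in (ii): a clean way is to restrict the defining infimum of $u_{k,M,\varepsilon}$ to chains of length at most $C\sfd(x,y)+\varepsilon$ (the cushion being needed to preserve Lipschitz regularity under concatenation and vanishing in the final limit $\varepsilon\to 0$); alternatively one can add a small length penalty $\delta\ell(\sfc)$ absorbed in a further limit $\delta\to 0$, exploiting \eqref{eq:bound_on_m_x,y^L} to force the optimal chain into the short regime.
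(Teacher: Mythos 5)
Your implications (iii)$\Rightarrow$(i) and (ii)$\Rightarrow$(iii) are correct and coincide with the paper's. The construction of $u_{k,M,\varepsilon}$ with the $\delta$-length-penalty for the constraint $\ell(\sfc)\le C\sfd(x,y)$ also matches, in substance, the paper's treatment of the \emph{Lipschitz} case of (ii) (its intermediate conditions (ii)$_{\Lip}$ and (ii)$'_{\Lip}$), modulo the preliminary reductions to the complete case and to $x,y$ lying in the same $\varepsilon$-chain connected component, which you only partially address.

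The genuine gap is the approximation step in (i)$\Rightarrow$(ii): bounded Lipschitz majorants $g_k\ge g$ with $\int g_k^p\,\d\mm_{x,y}^L\to\int g^p\,\d\mm_{x,y}^L$ do not exist for a general Borel $g$ with $g(x),g(y)<+\infty$. Two obstructions: (a) such a $g$ may equal $+\infty$ on a dense $\mm_{x,y}^L$-null set, in which case it admits no real-valued continuous majorant at all; (b) even for bounded $g$, e.g. $g=\chi_{\Q\cap[0,1]}$ on $\R$, every continuous majorant is $\ge 1$ on $[0,1]$, so the integrals cannot converge. Your proposed fix does not repair this: Vitali--Carath\'eodory produces a lower semicontinuous majorant $v\ge g^p$ with controlled integral, but the sup-convolutions of a lower semicontinuous function decrease to its \emph{upper} semicontinuous envelope rather than to the function itself; for $v=\chi_U$ with $U$ open, dense and of small measure the envelope is identically $1$, so the integral control is lost exactly where it is needed. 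This is why the paper, after establishing the Lipschitz case, does not approximate $g$ from above but instead detours through curves: completeness together with Proposition \ref{prop:Sylvester_compactness_chains} and Lemma \ref{lemma:lower-semicontinuity-integral-chains-convergence} upgrades the Lipschitz chain statement to the analogous curve statement, the known equivalence of \cite[Theorem A.3]{CaputoCavallucci2024II} extends the curve statement from Lipschitz to arbitrary Borel $g$, and Proposition \ref{prop:approx_integral_curve_with_subchains} (this is the only place where $g(x),g(y)<+\infty$ is used) converts the resulting curve back into $\varepsilon$-chains. Some substitute for this Lipschitz-to-Borel duality is needed to close your argument; as written, your limit in $k$ yields a bound by $\inf_k\int g_k^p\,\d\mm_{x,y}^L$, which can be strictly larger than $\int g^p\,\d\mm_{x,y}^L$ or even infinite.
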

\begin{proof}[Proof of Proposition \ref{prop:pt_PI_equivalent_conditions_A_p_connectedness}]
    Let $\Y_\varepsilon$ be the $\varepsilon$-chain connected component of $\X$ containing $x$.
    
    \textbf{Reduction to the case $y \in \Y_\varepsilon$ for every $\varepsilon > 0$.} We assume that $y$ does not belong to $\Y_{\bar{\varepsilon}}$ for some $\bar{\varepsilon} > 0$. If this is the case, none of the conditions (i)-(iii) hold and thus the theorem holds trivially true. Indeed if $y\notin \Y_{\bar{\varepsilon}}$ then the function $u \equiv 0$ on $\Y_{\bar{\varepsilon}}$ and $u\equiv 1$ on $(X\setminus \Y_{\bar{\varepsilon}})$ is Lipschitz, since $\sfd(\Y_{\bar{\varepsilon}}, \X \setminus \Y_{\bar{\varepsilon}}) > \bar{\varepsilon}$ by \eqref{eq:chain_connected_components_positive_distance}, has $\lip\,u \equiv 0$ and contradicts (i). The same function contradicts (iii) since $g\equiv 0 \in \UG{\bar{\varepsilon}}(u)$.
    Condition (ii) does not hold, because it implies that $x,y$ belong to the same ${\bar{\varepsilon}}$-chain connected component. Therefore, we can assume that $x,y \in \Y$.

    \textbf{Reduction to the case $(\X,\sfd)$ is complete.} We claim that properties (i) to (iii) hold on $(\X,\sfd,\mm)$ if and only if they hold on the completion $(\bar{\X}, \bar{\sfd}, \bar{\mm})$. If they hold on $\X$ then they clearly hold on $\bar{\X}$. The vice versa in the case of (i) is given arguing as in Proposition \ref{prop:AGS_completion}.
    
    Regarding (ii) we argue as follows: given a Borel function $g\colon \X \to [0,+\infty]$ such that $g(x),g(y) < +\infty$ we extend it to $\bar{g} \colon \bar{\X} \to [0,+\infty]$ setting $\bar{g} \equiv +\infty$ on $\bar{\X} \setminus \X$. If $\int g^p\,\d\mm_{x,y}^L = +\infty$ there is nothing to prove. Otherwise, condition (ii) on $\bar{\X}$ gives that for every $\eta >0$ there exists $\varepsilon > 0$ and a chain $c_\eta \in \Chain{\varepsilon}_{x,y}$ such that $\ell(c_\eta) \le C\sfd(x,y)$ and $\left(\int_{\sfc_\eta} \bar{g} \right)^p \le C\sfd(x,y)^{p-1}\int \bar{g}^p\,\d\mm_{x,y}^L + \eta < +\infty$. Then $c_\eta \subseteq \X$ for every $\eta > 0$, since if not we would have $\int_{\sfc_\eta} \bar{g} = +\infty$. Therefore \eqref{eq:A_p-connectedness-limit} holds for $g$ on $\X$.
    
    Suppose now that (iii) holds on $\bar{\X}$. For every $u \colon \X \to \R$ Borel and every $g\in \UG{\varepsilon}(u)$ such that $g(x),g(y) < +\infty$ we consider the same extension $\bar{g}$ of $g$ to $\bar{\X}$ as above and we extend $u$ to $\bar{\X}$ by setting $u \equiv 0$ on $\bar{\X} \setminus \X$. 
    We observe that $\bar{g} \in \UG{\varepsilon}(\bar{u})$ on $\bar{\X}$. Indeed, for every $\sfc \in \Chain{\varepsilon}(\bar{\X})$ there are two possibilities: either $\sfc \in \Chain{\varepsilon}(\bar{\X}\setminus \X)$ and in that case $\int_\sfc \bar{g} = +\infty$, or $\sfc \subseteq \X$ and one gets $\vert \bar{u}(\omega(\sfc)) - \bar{u}(\alpha(\sfc))\vert = \vert u(\omega(\sfc)) - u(\alpha(\sfc))\vert \le \int_\sfc g = \int_{\sfc}\bar{g}$. Condition (iii) applied to the couple $(\bar{u}, \bar{g})$ on $\bar{\X}$ gives
    \begin{equation}
        \begin{aligned}
            \vert u(x) - u(y) \vert^p = \vert \bar{u}(x) - \bar{u}(y) \vert^p \le C\bar{\sfd}(x,y)^{p-1}\int_{\bar{\X}} \bar{g}^p \,\d\mm_{x,y}^L = C\sfd(x,y)^{p-1}\int_{\X} g^p \,\d\mm_{x,y}^L,
        \end{aligned}
    \end{equation}
    so condition (iii) holds also for the couple $(u,g)$ on $\X$. The discussion above allows us to prove the equivalences in the case $(\X,\sfd)$ is complete. 

    From now on, we assume that $(\X,\sfd)$ is complete and $x,y$ belongs to the same $\varepsilon$-chain connected component for every $\varepsilon>0$.
    
    \textbf{Main argument.} 
    We introduce the additional conditions
    \begin{itemize}
        \item[(i)$_{\Lip_\loc}$] there exist $C>0$, $L\ge 1$ such that \eqref{eq:ptwise_PI_chain_UG} holds for every $u\in \Lip_\loc(\X)$, every $\varepsilon > 0$ and every $g\in \LUG{\varepsilon}(u)$ bounded;
        \item[(i)$_{\Lip_\loc}'$] there exist $C>0$, $L\ge 1$ such that for every $\varepsilon > 0$ the inequality \eqref{eq:ptwise_PI_chain_UG} holds for every $u\in \Lip_\loc(\Y_\varepsilon)$ and every $g \colon \Y_\varepsilon \to [0,+\infty]$ with $g \in \LUG{\varepsilon}(u)$ and bounded;
        \item[(ii)$_{\Lip}$] there exist $C>0$ and $L\ge 1$ such that for every $g\in \Lip(\X)$, $g\ge 0$ and bounded, and for every $\varepsilon > 0$ it holds that
       \begin{equation}
        \label{eq:A_p-connectedness-limit_Lipschitz}
            \inf_{\substack{\sfc \in \Chain{\varepsilon}_{x,y}\\ \ell(\sfc) \le C\sfd(x,y)}} \left(\int_{\sfc} g\right)^p \le C\sfd(x,y)^{p-1}\int g^p\,\d\mm_{x,y}^L.
        \end{equation}
        \item[(ii)$'_{\Lip}$] there exist $C>0$ and $L\ge 1$ such that for every $\varepsilon > 0$ and every $g\in \Lip(\Y_\varepsilon)$, $g\ge 0$ and bounded there exists $\sfc \in \Chain{\varepsilon}_{x,y}(\Y_\varepsilon)$ such that $\ell(\sfc) \le C\sfd(x,y)$ and
       \begin{equation}
        \label{eq:A_p-connectedness-Y-varepsilon}
            \left(\int_{\sfc} g\right)^p \le C\sfd(x,y)^{p-1}\int_{\Y_\varepsilon} g^p\,\d\mm_{x,y}^L.
        \end{equation}
        
    \end{itemize}

    We prove the theorem by showing the following chains of implications: (i) $\Rightarrow$ (i)$_{\Lip_\loc}$ $\Rightarrow$ (i)$_{\Lip_\loc}'$ $\Rightarrow$ (ii)$_{\Lip}'$ $\Rightarrow$ (ii)$_\Lip$ $\Rightarrow$ (ii) $\Rightarrow$ (iii) $\Rightarrow$ (i).
    
    Suppose (i) holds, let $u \in \Lip_\loc(\X)$ and $g\in \LUG{\varepsilon}(u)$. Lemma \ref{lemma:slopeeps_epsug} says that $\lip\,u \le g$. The function $u$ is Lipschitz on the compact set $\overline{B}_{x,y}^L$, because of \cite[Theorem 4.2]{BeerGarrido2015}. By McShane Extension Theorem we can find a Lipschitz map $\hat{u} \in \Lip(\X)$ which coincides with $u$ on $\overline{B}_{x,y}^L$. Applying (i), and using that $\lip\,\hat{u} = \lip\,u$ $\mm_{x,y}^L$-a.e., we get
    \begin{equation}
        \begin{aligned}
            \vert u(x) - u(y) \vert^p = \vert \hat{u}(x) - \hat{u}(y) \vert^p \le C\sfd(x,y)^{p-1}\int (\lip\,\hat{u})^p\,\d\mm_{x,y}^L &= C\sfd(x,y)^{p-1}\int (\lip\,{u})^p\,\d\mm_{x,y}^L \\&\le C\sfd(x,y)^{p-1}\int g^p\,\d\mm_{x,y}^L,
        \end{aligned}
    \end{equation}
    which proves (i)$_{\Lip_\loc}$.

    We prove that if (i)$_{\Lip_\loc}$ holds with constants $C,L$ then (i)$'_{\Lip_\loc}$ holds with same constants. We fix $\varepsilon>0$ and consider 
    $u\in \Lip_\loc(\Y_\varepsilon)$ and $g \colon \Y_\varepsilon \to [0,+\infty]$ with $g\in \LUG{\varepsilon}(u)$. We define $\hat{u}$, $\hat{g}$ as function on $\X$ by extending $u$ and $g$ to be constantly equal to zero on $\X \setminus \Y_\varepsilon$, respectively. Using \eqref{eq:chain_connected_components_positive_distance} one gets that $\hat{u} \in \Lip_\loc(\X)$ and $\hat{g} \in \Lip(\X)$ with Lipschitz constant $\max\{ \frac{\sup g}{\varepsilon}, {\rm Lip}(g)\}$. Moreover, $\hat{g} \in \LUG{\varepsilon}(\hat{u})$. By applying (i)$_{\Lip_\loc}$, and using the fact that $\hat{g} \equiv 0$ on $\X \setminus \Y_\varepsilon$, we conclude.

    To prove that (i)$'_{\Lip_\loc} \Rightarrow$ (ii)$'_{\Lip}$, we adapt the argument of \cite[Theorem 1.5]{ErikssonBique2019II}. We fix $\varepsilon >0$ and a bounded $g\in \Lip(\Y_\varepsilon)$, with $g\ge 0$. We claim that (ii)$'_\Lip$ holds with $C' = 2^{p+4}CC_DL$ and $L'= \max\{L,C'\}$.    
    For every $\delta > 0$ such that $\int_{\Y_\varepsilon} g^p\,\d\mm_{x,y}^{L} < \delta^p \mm_{x,y}^{L}(\Y_\varepsilon)$ we consider the function
    $$u_\delta\colon \Y_\varepsilon \to [0,+\infty),\quad u_\delta(z) = \inf\left\{ \int_\sfc (g + \delta)\,:\, \sfc \in \Chain{\varepsilon}(\Y_\varepsilon), \alpha(\sfc) = x,\,\omega(\sfc)=z\right\}.$$
    With usual techniques it is possible to show that $u_\delta$ is $(\sup_\X g + \delta)$-Lipschitz up to scale $\varepsilon$, i.e. if $\sfd(z,w) \le \varepsilon$ then $\vert u_\delta(z) - u_\delta(w)\vert \le (\sup_\X g + \delta) \sfd(z,w)$. 
    Moreover, $(g + \delta)\in \LUG{\varepsilon}(u_\delta)$. The condition (i)$'_{\Lip_\loc}$ applied to the couple $(u_\delta,g + \delta)$ implies that 
    $$u_\delta(y)^p \le C\sfd(x,y)^{p-1}\int_{\Y_\varepsilon} (g + \delta)^p\,\d\mm_{x,y}^L.$$ By definition of $u_\delta$ we have that $\int_{\Y_\varepsilon} (g + \delta)^p\,\d\mm_{x,y}^{L} > 0$ and that we can find chains $\sfc_\delta \in \Chain{\varepsilon}_{x,y}(\Y_\varepsilon)$ such that 
    $$\left(\int_{\sfc_\delta} (g + \delta)\right)^p \le 2C\sfd(x,y)^{p-1}\int_{\Y_\varepsilon} (g + \delta)^p\,\d\mm_{x,y}^L.$$
    Moreover, using that $(g(x)+\delta)^p \le 2^{p-1}(g(x)^p + \delta^p)$ for all $x \in \Y_\varepsilon$, we have
    \begin{equation}
        \label{eq:chains_of_bounded_length}
        \begin{aligned}
            \delta^p\ell(\sfc_\delta)^p \le \left(\int_{\sfc_\delta} (g + \delta)\right)^p &\le 2^{p}C\sfd(x,y)^{p-1}\left(\int_{\Y_{\varepsilon}} g^p \,\d\mm_{x,y}^L + \delta^p\mm_{x,y}^L(\Y_\varepsilon)\right) \\
            &\le 2^{p + 1}C\sfd(x,y)^{p-1}\left( \delta^p\mm_{x,y}^L(\Y_\varepsilon)\right)\\            &\stackrel{\eqref{eq:bound_on_m_x,y^L}}{\le} 2^{p+1}C\cdot 8C_DL\sfd(x,y)^p \delta^p.
        \end{aligned}
    \end{equation}
    This implies that $\ell(\sfc_\delta) \le C'\sfd(x,y)$ for every $\delta$. 
    
    If $\int_{\Y_\varepsilon} g^p\,\d\mm_{x,y}^{L'} > 0$ then, by choosing $\delta$ such that $\delta^p\mm_{x,y}^L(\Y_\varepsilon) < 2\int_{\Y_\varepsilon} g^p \,\d\mm_{x,y}^{L'}$, we have that the chain $\sfc_\delta$ satisfies
    $$\left(\int_{\sfc_\delta} g\right)^p \le 3\cdot2^{p+1}C\sfd(x,y)^{p-1}\int_{\Y_\varepsilon} g^p\,\d\mm_{x,y}^{L'} \le C'\sfd(x,y)^{p-1}\int_{\Y_\varepsilon} g^p\,\d\mm_{x,y}^{L'}$$
    and one can take $\sfc = \sfc_\delta$ to get the thesis.
    
    If $\int_{\Y_\varepsilon} g^p\,\d\mm_{x,y}^{L'} = 0$, so $g\equiv 0$ on $B_{x,y}^{L'} \cap \Y_\varepsilon$ since $g$ is Lipschitz, we argue as follows. By \eqref{eq:chains_of_bounded_length} we have the existence of a chain $\sfc \in \Chain{\varepsilon}_{x,y}(\Y_\varepsilon)$ with $\ell(\sfc) \le C'\sfd(x,y)$, so $\sfc \subseteq B_{x,y}^{C'} \cap \Y_\varepsilon \subseteq B_{x,y}^{L'} \cap \Y_\varepsilon$. Therefore $\int_{\sfc} g = 0$ and (ii)$'_\Lip$ holds also in this case.

    We assume (ii)$'_\Lip$ and we show that (ii)$_\Lip$ holds with same constants. Let $g\in \Lip(\X)$, $g\ge 0$ and bounded. For every $\varepsilon > 0$, the restriction $g\restr{\Y_\varepsilon}$ satisfies the assumptions of (ii)$'_\Lip$. Therefore, there exists a chain $\sfc_\varepsilon \in \Chain{\varepsilon}(\X)$ such that
    \begin{equation}
            \left(\int_{\sfc_\varepsilon} g\right)^p \le C\sfd(x,y)^{p-1}\int_{\Y_\varepsilon} g^p\,\d\mm_{x,y}^L \le C\sfd(x,y)^{p-1}\int_{\X} g^p\,\d\mm_{x,y}^L.
    \end{equation}
    By taking the infimum on the left over $\{ \sfc \in \Chain{\varepsilon}_{x,y},\, \ell(\sfc) \le C\sfd(x,y)\}$ and the limit as $\varepsilon$ goes to $0$, we conclude.

    Suppose (ii)$_\Lip$ holds. Since $\X$ is complete we can use Proposition \ref{prop:Sylvester_compactness_chains} and Lemma \ref{lemma:lower-semicontinuity-integral-chains-convergence} to show that for every $g\in \Lip(\X)$, $g\ge 0$ and bounded, there exists a curve $\gamma \in \Gamma_{x,y}$ with $\ell(\gamma) \le C\sfd(x,y)$ and such that $\left(\int_\gamma g\right)^p \le C \sfd(x,y)^{p-1}\int g^p \,\d\mm_{x,y}^L$. This is condition (iii) of \cite[Theorem A.3]{CaputoCavallucci2024II} which is equivalent to the following: for every Borel $g\colon \X \to [0,+\infty]$ there exists $\gamma \in \Gamma_{x,y}$ such that $\ell(\gamma) \le C\sfd(x,y)$ and $\left(\int_\gamma g\right)^p \le C \sfd(x,y)^{p-1}\int g^p \,\d\mm_{x,y}^L$. If moreover $g(x), g(y) < +\infty$ we can use Proposition \ref{prop:approx_integral_curve_with_subchains} to find chains $\sfc_j \in \Chain{\frac{1}{j}}_{x,y}$ such that $\int_\gamma g \ge \lims_{j\to +\infty} \int_{\sfc_j}g$.     
    This implies (ii).
    
    If (ii) holds, \eqref{eq:A_p-connectedness-limit} and the chain upper gradient inequality gives
    $$\vert u(x) - u(y) \vert^p \le \lim_{\varepsilon \to 0} \inf_{\substack{\sfc \in \Chain{\varepsilon}_{x,y}\\ \ell(\sfc) \le C\sfd(x,y)}} \left(\int_{\sfc} g\right)^p \le C\sfd(x,y)^{p-1}\int g^p\,\d\mm_{x,y}^L,$$
    which is (iii).

    Finally, if (iii) holds and $u \in {\rm Lip}(\X)$, \eqref{eq:ptwise_PI_chain_UG} applied to $\sl{\varepsilon}u \in \UG{\varepsilon}(u)$ by Lemma \ref{lemma:slopeeps_epsug}, gives
    $$\vert u(x) - u(y) \vert^p \le C\sfd(x,y)^{p-1} \int (\sl{\varepsilon}u)^p\,\d\mm_{x,y}^L.$$
    Therefore (i) follows by applying dominated convergence.
\end{proof}
\begin{remark}
\label{rem:drop_quasiconvexity}
The proof above shows that the conditions of Proposition \ref{prop:pt_PI_equivalent_conditions_A_p_connectedness} are equivalent to
\begin{itemize}
    \item[(iii)$_\UG{}$] there exist $C >0$ and $L\ge 1$ such that for every Borel $u\colon \X \to \R$ and for every $g\in \UG{}(u)$ it holds $\vert u(x) - u(y) \vert ^p \le C\sfd(x,y)^{p-1}\int g^p \,\d\mm_{x,y}^L$,
\end{itemize}
in case $(\X,\sfd)$ is complete.
    Indeed if (iii)$_{\UG{}}$ holds then (i) holds because $\lip\,u \in \UG{}(u)$ if $u\in \Lip(\X)$. Vice versa, in the proof we showed that the conditions of Proposition \ref{prop:pt_PI_equivalent_conditions_A_p_connectedness} are also equivalent to condition (iii) of \cite[Theorem A.3]{CaputoCavallucci2024II}, which is in turn equivalent to (iii)$_{\UG{}}$ by the same \cite[Theorem A.3]{CaputoCavallucci2024II}.
    
    This generalizes the result of \cite[Theorem A.3]{CaputoCavallucci2024II}, in which the implication from item (i) of Proposition \ref{prop:pt_PI_equivalent_conditions_A_p_connectedness} and (iii)$_\UG{}$ is proved under the additional assumption of local quasiconvexity of the space. By using chains as we did, we are able to remove this assumption and to show the equivalence of pointwise estimates in general.

    The reason behind this improvement is the following. A standard technique, that we used also in the proof of Proposition \ref{prop:pt_PI_equivalent_conditions_A_p_connectedness}, consists in taking a bounded function $g$ and in associating the functions
    $$u_{\text{curve}}(z) := \inf\left\{ \int_\gamma g \,:\, \gamma \text{ curve}, \alpha(\gamma)=x,\,\omega(\gamma)=z \right\},$$
    $$u_{\varepsilon\text{-chain}}(z) := \inf\left\{ \int_\sfc g \,:\, \sfc\in \Chain{\varepsilon}, \alpha(\sfc)=x,\,\omega(\sfc)=z\right\}.$$
    As showed in the proof of Proposition \ref{prop:pt_PI_equivalent_conditions_A_p_connectedness}, using the discrete nature of chains, it is possible to say that $u_{\varepsilon\text{-chain}}$ is locally Lipschitz, actually Lipschitz up to scale $\varepsilon$, on the $\varepsilon$-chain connected component containing $x$. On the other hand, in order to say that $u_{\text{curve}}$ is locally Lipschitz one needs some connectivity property of the metric space $\X$, as the local quasiconvexity.
\end{remark}

\subsection{Keith's characterization via chains}
\label{sec:keith_chain}

The Poincaré inequality with upper gradients can be characterized via modulus estimates, see \cite[Theorem 2]{Kei03} and \cite[Proposition A.1]{CaputoCavallucci2024II}. We will show a similar statement for chains. Let $(\X,\sfd,\mm)$ be a metric measure space.

Let $\mathcal{F}$ be a family of Borel functions on $\X$.
Given a family of chains $\sfC$, the $(\varepsilon,p)$-modulus of $\sfC$ with respect to $\mathcal{F}$ is defined as
$$\cModepsilon{p}{\varepsilon}(\sfC, \mathcal{F}, \mm) := \inf \left\{  \int \rho^p\,\d\mm\,:\, \rho \in {\rm Adm}^\varepsilon(\sfC) \cap \mathcal{F}\right\},$$
where we recall that 
$${\rm Adm}^{\varepsilon}({\sfC})=\left\{ \rho \ge 0\,:\, \rho \text{ Borel, }\int_{\sfc} \rho \ge 1 \text{ for every } \sfc \in {\sfC} \cap \Chain{\varepsilon} \right\}.$$

If $\mathcal{F}$ is closed under finite sums, the same proof of Proposition \ref{prop:outer_measure} shows that the assignment $\sfC \mapsto \cModepsilon{\varepsilon}{p}(\sfC, \mathcal{F}, \mm)$  satisfies 
\begin{equation}
    \label{eq:chain_modulus_subfamily_outer_measure}
    \cModepsilon{\varepsilon}{p}\left(\bigcup_{i\in I}\sfC_i, \mathcal{F} , \mm\right) \le \sum_{i\in I} \cModepsilon{\varepsilon}{p}\left(\sfC_i, \mathcal{F}, \mm\right)
\end{equation}
for a finite set of indices $I$. In general it does not define an outer measure. Notice that $ (0,+\infty) \ni \varepsilon \mapsto \cModepsilon{p}{\varepsilon}(\sfC, \mathcal{F}, \mm) \in [0,+\infty]$ is non-decreasing.
%
%


We also recall the definition of the $p$-modulus of a family of curves. Let $\Gamma$ be a family of curves and let $\mathcal{F}$ be a family of Borel functions. Then
$$\Mod_p(\Gamma, \mathcal{F}, \mm) := \inf\left\{ \int \rho^p \,\d\mm\,:\, \rho \in {\rm Adm}(\Gamma) \cap \mathcal{F}\right\},$$
where ${\rm Adm}(\Gamma) := \{ \rho \colon \X \to [0,+\infty] \, : \, \int_\gamma \rho \ge 1 \text{ for all } \gamma \in \Gamma\}$. If $\mathcal{F}$ is the class of all Borel functions, then we simply write $\Mod_p(\Gamma, \mm)$. 

For the next result we define $\mathcal{F}_{x,y} := \{g \colon \X \to \R \,:\, g \text{ Borel and } g(x),g(y) < +\infty\}$, for $x,y\in \X$. 
\begin{proposition}
\label{prop:modulus_chain_modulus_limit}
    Let $(\X,\sfd,\mm)$ be a doubling metric measure space such that $(\X,\sfd)$ is complete, $x,y\in \X$ and $L\ge 1$. Then
    $$\Mod_p(\Gamma_{x,y}, \mm_{x,y}^L) = \lim_{\varepsilon \to 0} \cModepsilon{\varepsilon}{p}(\Chain{}_{x,y}, \mathcal{F}_{x,y} ,\mm_{x,y}^L) = \lim_{\varepsilon \to 0} \cModepsilon{\varepsilon}{p}(\Chain{}_{x,y}, \Lip(\X), \mm_{x,y}^L).$$
\end{proposition}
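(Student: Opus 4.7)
The plan is to establish the two equalities via a cycle of three inequalities. The trivial one is
\begin{equation*}
    \cModepsilon{\varepsilon}{p}(\Chain{}_{x,y}, \Lip(\X), \mm_{x,y}^L) \ge \cModepsilon{\varepsilon}{p}(\Chain{}_{x,y}, \mathcal{F}_{x,y}, \mm_{x,y}^L) \qquad \forall\,\varepsilon > 0,
\end{equation*}
since $\Lip(\X) \subseteq \mathcal{F}_{x,y}$ and an infimum over a smaller admissibility class is larger. It then remains to prove $\cModepsilon{\varepsilon}{p}(\Chain{}_{x,y}, \mathcal{F}_{x,y}, \mm_{x,y}^L) \ge \Mod_p(\Gamma_{x,y}, \mm_{x,y}^L)$ for every $\varepsilon > 0$, and $\Mod_p(\Gamma_{x,y}, \mm_{x,y}^L) \ge \lim_{\varepsilon \to 0}\cModepsilon{\varepsilon}{p}(\Chain{}_{x,y}, \Lip(\X), \mm_{x,y}^L)$.

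For the first of these I would argue that any $g \in \mathcal{F}_{x,y}$ admissible for $\Chain{\varepsilon}_{x,y}$ is automatically admissible for $\Gamma_{x,y}$. Fix $\gamma \in \Gamma_{x,y}$: if $\int_\gamma g = +\infty$ the required bound is trivial; otherwise Proposition \ref{prop:approx_integral_curve_with_subchains} applies (since $g(x), g(y) < +\infty$ by membership in $\mathcal{F}_{x,y}$), providing $t \in [0,1]$ and a subsequence $n_j$ with $\sfc_{t,n_j} \in \Chain{L/n_j}_{x,y}$ and $\int_\gamma g \ge \lims_j \int_{\sfc_{t,n_j}} g \ge 1$ (once $L/n_j \le \varepsilon$ these chains lie in $\Chain{\varepsilon}_{x,y}$). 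This yields $\cModepsilon{\varepsilon}{p}(\Chain{}_{x,y}, \mathcal{F}_{x,y}, \mm_{x,y}^L) \ge \Mod_p(\Gamma_{x,y}, \mathcal{F}_{x,y}, \mm_{x,y}^L)$, and the restriction to $\mathcal{F}_{x,y}$ does not change the curve modulus since any admissible Borel $\rho$ can be redefined on the $\mm_{x,y}^L$-null set $\{x, y\}$ (recall $R_{x,y}^L(x)=R_{x,y}^L(y)=0$) without altering its curve integrals or $L^p(\mm_{x,y}^L)$-norm.

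The heart of the proof is the reverse direction. Given $\rho$ admissible for $\Gamma_{x,y}$ with $\|\rho\|_{L^p(\mm_{x,y}^L)}^p$ close to $\Mod_p(\Gamma_{x,y}, \mm_{x,y}^L)$, I would construct Lipschitz chain-admissible functions of comparable $L^p(\mm_{x,y}^L)$-cost, adapting the Eriksson-Bique-style construction underlying Theorems \ref{theo:density_energy_complete} and \ref{theo:density-energy-chains-complete}. After a Vitali-Carathéodory step and suitable modifications outside the support $B_{x,y}^L$ of $\mm_{x,y}^L$, define $\tilde{\rho} := \rho + \psi_M + \sigma\psi$ where $\psi_M$ is continuous, vanishes on $B_{x,y}^{L+1}$ and equals a large constant $M$ on $\X \setminus B_{x,y}^{L+2}$, and $\sigma\psi$ is a small positive Lipschitz bump with $\psi \ge 1$ on $B_{x,y}^{L+1}$. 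The resulting $\tilde{\rho}$ is lower semicontinuous, admissible for $\Gamma_{x,y}$ (since it dominates $\rho$), and its $L^p(\mm_{x,y}^L)$-norm exceeds that of $\rho$ by only $O(\sigma)$ because $\mm_{x,y}^L$ is supported in $B_{x,y}^L$. Approximate $\tilde{\rho}$ from below by bounded Lipschitz functions $\rho_j \nearrow \tilde{\rho}$ and define
\begin{equation*}
    u_j(z) := \min\!\Bigl\{1,\,\inf\Bigl\{\textstyle\int_\sfc \rho_j\,:\, \sfc \in \Chain{\frac{1}{j}}_{x,z}\Bigr\}\Bigr\}.
\end{equation*}
As in Step 5 of the proof of Theorem \ref{theo:density_energy_complete}, $u_j$ is Lipschitz, $u_j(x) = 0$, and $\rho_j \in \LUG{\frac{1}{j}}(u_j)$; if $u_j(y) = 1$ for $j$ large, then $\rho_j$ itself is a Lipschitz element of $\mathrm{Adm}^{1/j}(\Chain{}_{x,y})$, giving $\cModepsilon{1/j}{p}(\Chain{}_{x,y}, \Lip(\X), \mm_{x,y}^L) \le \|\rho\|_{L^p(\mm_{x,y}^L)}^p + O(\sigma)$, and letting $\sigma \to 0$ followed by minimizing over $\rho$ closes the argument.

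The hard part will be proving $u_j(y) \to 1$, via the standard contradiction-compactness argument in the spirit of Step 8 of Theorem \ref{theo:density_energy_complete}. If $u_j(y) \le 1 - \delta$ infinitely often there would exist chains $\sfc_j \in \Chain{\frac{1}{j}}_{x,y}$ with $\int_{\sfc_j}\rho_j \le 1 - \delta/2$; for $M$ sufficiently large the cutoff $\psi_M$ forces $\sfc_j \subset B_{x,y}^{L+2}$, and the bump $\sigma\psi$ forces $\rho_j \ge \sigma/2$ on $B_{x,y}^{L+1}$ for $j$ large, so that $\ell(\sfc_j)$ is uniformly bounded. Proposition \ref{prop:Sylvester_compactness_chains} then extracts a subsequential limit curve $\gamma \in \Gamma_{x,y}$, and Lemma \ref{lemma:lower-semicontinuity-integral-chains-convergence} yields $\int_\gamma \tilde{\rho} \le 1 - \delta/2$, contradicting the admissibility of $\tilde{\rho}$ for $\Gamma_{x,y}$. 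The delicate point will be calibrating $\psi_M$ and $\sigma\psi$ so that the three competing requirements---pointwise lower bound on $\rho_j$ for length control, global cutoff for spatial confinement, and $L^p(\mm_{x,y}^L)$-cost of only $O(\sigma)$---are simultaneously achieved.
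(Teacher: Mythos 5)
Your proposal is correct in architecture and closes the same cycle of inequalities as the paper; the two ``easy'' directions (Lipschitz class contained in $\mathcal{F}_{x,y}$, and chain-admissible implies curve-admissible via Proposition \ref{prop:approx_integral_curve_with_subchains}) coincide with the paper's argument. For the hard direction $\Mod_p(\Gamma_{x,y},\mm_{x,y}^L)\ge\lim_{\varepsilon\to0}\cModepsilon{\varepsilon}{p}(\Chain{}_{x,y},\Lip(\X),\mm_{x,y}^L)$ you take a genuinely different route. The paper first discards long chains at cost $\delta$ (via the analogue of \cite[Lemma A.2]{CaputoCavallucci2024} and \eqref{eq:chain_modulus_subfamily_outer_measure}), invokes Keith's result that the curve modulus of the compact family $\Gamma_{x,y}^\Lambda$ can be computed with Lipschitz admissible functions, and then shows by the compactness--lower-semicontinuity contradiction that any such Lipschitz $\rho$ satisfies $\lim_{\varepsilon\to0}\inf_{\sfc\in\Chain{\varepsilon,\Lambda}_{x,y}}\int_\sfc\rho\ge1$, so $(1+\eta)\rho$ is chain-admissible for small $\varepsilon$. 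You instead start from an arbitrary Borel curve-admissible $\rho$ and manufacture Lipschitz chain-admissible competitors through the Eriksson-Bique distance functions $u_j$, replacing the bounded-length reduction by the $\sigma\psi$-bump (which bounds $\ell(\sfc_j)\le\sigma^{-1}$ at $L^p(\mm_{x,y}^L)$-cost $O(\sigma)$ because $\mm_{x,y}^L$ is finite, cf.\ \eqref{eq:bound_on_m_x,y^L}) and bypassing Keith's theorem entirely. Both routes rest on Proposition \ref{prop:Sylvester_compactness_chains} and Lemma \ref{lemma:lower-semicontinuity-integral-chains-convergence}; yours is self-contained at the price of redoing the $u_j$ machinery, the paper's is shorter but outsources two nontrivial external results.

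Three points in your sketch need care, all fixable. First, $\rho_j\nearrow\tilde\rho$ pointwise does not give the uniform bound $\rho_j\ge\sigma/2$ you use for the length estimate; you should build the bump in by hand, e.g.\ approximate $\rho'+\psi_M$ by Lipschitz $h_j$ and set $\rho_j:=h_j+\sigma\psi$. Second, with $\psi\ge1$ only on $B_{x,y}^{L+1}$ and $\psi_M=0$ there, nothing prevents chains from accumulating length in the annulus between $B_{x,y}^{L+1}$ and $B_{x,y}^{L+2}$; simplest is to take $\psi\equiv1$ on all of $\X$ (the cost is still $\sigma^p\mm_{x,y}^L(\X)$), after which the cutoff $\psi_M$ becomes superfluous, since bounded length plus fixed endpoints already confines the chains. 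Third, you never verify hypothesis (iii) of Proposition \ref{prop:Sylvester_compactness_chains}; since the space is doubling and complete, hence proper, the confined chains lie in a fixed compact ball and one may take all $K_i$ equal to that ball, making the $h$-sum vanish.
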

\begin{proof}
    During this proof we use the notation $\Chain{\varepsilon,\Lambda}_{x,y}$ to denote the family of chains $\sfc \in \Chain{\varepsilon}_{x,y}$ such that $\ell(\sfc) \le \Lambda\sfd(x,y)$. 
    In the same way, $\Gamma_{x,y}^\Lambda$ denotes the family of rectifiable curves with $\alpha(\gamma) = x$, $\omega(\gamma) = y$ and $\ell(\gamma) \le \Lambda \sfd(x,y)$. 
    We want to show
    \begin{equation}
        \label{eq:modulus_comparison_1}
        \lim_{\varepsilon \to 0} \cModepsilon{\varepsilon}{p}(\Chain{}_{x,y}, \Lip(\X),\mm_{x,y}^L) \le \Mod_p(\Gamma_{x,y}, \mm_{x,y}^L).
    \end{equation}
    We fix $\delta > 0$. The same proof of \cite[Lemma A.2]{CaputoCavallucci2024}, together with \eqref{eq:chain_modulus_subfamily_outer_measure}, shows that we can find $\Lambda \ge 1$ such that $\cModepsilon{\varepsilon}{p}(\Chain{}_{x,y}, \Lip(\X) ,\mm_{x,y}^L) \le \cModepsilon{\varepsilon}{p}(\Chain{\varepsilon, \Lambda}_{x,y}, \Lip(\X) ,\mm_{x,y}^L) + \delta$. 
    We consider the compact family of curves $\Gamma_{x,y}^\Lambda$.
    By \cite[Proposition 6]{Kei03} we have $\Mod_p(\Gamma_{x,y}^\Lambda, \mm_{x,y}^L) = \Mod_p(\Gamma_{x,y}^\Lambda, \Lip(\X), \mm_{x,y}^L)$. 
    Let $\rho \in {\rm Adm}(\Gamma_{x,y}^\Lambda) \cap \Lip(\X)$.  
    We claim that 
    \begin{equation}
        \label{eq:claim_modulus_limit}
        \lim_{\varepsilon \to 0} \inf_{\sfc \in \Chain{\varepsilon, \Lambda}_{x,y}} \int_\sfc \rho \ge 1.
    \end{equation}
    Assuming the claim holds true, this implies that for every $\eta > 0$ there exists $\varepsilon_\eta >0$ such that if $\varepsilon \le \varepsilon_\eta$ then $(1+\eta)\rho \in {\rm Adm}^\varepsilon(\Chain{\varepsilon,\Lambda}_{x,y})$. Hence
    $$\lim_{\varepsilon \to 0} \cModepsilon{\varepsilon}{p}(\Chain{\varepsilon, \Lambda}_{x,y}, \Lip(\X) ,\mm_{x,y}^L) \le \lim_{\eta \to 0} \int (1+\eta)\rho^p\,\d\mm_{x,y}^L = \int \rho^p\,\d\mm_{x,y}^L.$$
    By the arbitrariness of $\rho$, we would get 
    $$\lim_{\varepsilon \to 0} \cModepsilon{\varepsilon}{p}(\Chain{}_{x,y}, \Lip(\X) ,\mm_{x,y}^L) \le \Mod_p(\Gamma_{x,y}^\Lambda, \mm_{x,y}^L) + \delta \le \Mod_p(\Gamma_{x,y}, \mm_{x,y}^L) + \delta$$
    By taking $\delta \to 0$ we would conclude that 
    $$\lim_{\varepsilon \to 0} \cModepsilon{\varepsilon}{p}(\Chain{}_{x,y}, \Lip(\X) ,\mm_{x,y}^L) \le \Mod_p(\Gamma_{x,y}, \mm_{x,y}^L).$$
    We prove the claim. Suppose \eqref{eq:claim_modulus_limit} is not true. Then there exists $\eta > 0$ and chains $\sfc_\varepsilon \in \Chain{\varepsilon,\Lambda}_{x,y}$ such that $\int_{\sfc_{\varepsilon}} \rho < 1-\eta$, for every $\varepsilon$ sufficiently small. We are in position to apply Proposition \ref{prop:Sylvester_compactness_chains} in order to find a curve $\gamma \in \Gamma_{x,y}^\Lambda$ such that $\sfc_{\varepsilon}$ subconverges to $\gamma$ as $\varepsilon \to 0$. By Lemma \ref{lemma:lower-semicontinuity-integral-chains-convergence} we have
    $$\int_\gamma \rho \le \limi_{\varepsilon \to 0} \int_{\sfc_\varepsilon} \rho < 1-\eta,$$
    which is a contradiction to the fact that $\rho \in {\rm Adm}(\Gamma_{x,y}^\Lambda)$. This concludes the proof of the claim.

    The inequality
    $$\cModepsilon{\varepsilon}{p}(\Chain{}_{x,y}, \mathcal{F}_{x,y} ,\mm_{x,y}^L) \le \cModepsilon{\varepsilon}{p}(\Chain{}_{x,y}, \Lip(\X), \mm_{x,y}^L)$$
    holds trivially for every $\varepsilon > 0$. It remains to show that
    \begin{equation}
        \label{eq:modulus_estimate_2}
        \Mod_p(\Gamma_{x,y}, \mm_{x,y}^L) \le \lim_{\varepsilon \to 0} \cModepsilon{\varepsilon}{p}(\Chain{}_{x,y}, \mathcal{F}_{x,y} ,\mm_{x,y}^L).
    \end{equation}
    Let $\rho \in {\rm Adm}^\varepsilon(\Chain{}_{x,y}) \cap \mathcal{F}_{x,y}$. We claim that $\rho \in {\rm Adm}(\Gamma_{x,y})$.    
    Let $\gamma \in \Gamma_{x,y}$. If $\int_\gamma \rho = +\infty$ there is nothing to prove. Otherwise, applying Proposition \ref{prop:approx_integral_curve_with_subchains}, we have that
    $$\int_\gamma \rho \ge \lims_{j\to +\infty} \int_{\sfc_{t,n_j}} \rho \ge 1,$$
    for $\sfc_{t,n_j} \in \Chain{\frac{1}{n_j}}_{x,y}$ defined therein.    
    By the arbitrariness of $\rho$ we have
    $$\Mod_p(\Gamma_{x,y}, \mm_{x,y}^L) \le  \cModepsilon{\varepsilon}{p}(\Chain{}_{x,y}, \mathcal{F}_{x,y} ,\mm_{x,y}^L),$$
    for every $\varepsilon > 0$. By taking the limit as $\varepsilon \to 0$, we obtain \eqref{eq:modulus_estimate_2} and we conclude the proof.
    
\end{proof}
\begin{remark}
\label{rmk:chain_modulus_PI}
    Notice that the statement of Proposition \ref{prop:modulus_chain_modulus_limit} cannot be formulated with the class $\mathcal{F} = \{ g\colon \X \to [0,+\infty]\,:\, g \text{ Borel}\}$.
    Indeed, observe that $\cModepsilon{\varepsilon}{p}(\Chain{}_{x,y},\mm_{x,y}^L) = 0$ as soon as $\mm(\{x,y\}) = 0$, because of Lemma \ref{lemma:measure_zero_implies_modulezero}, while $\cModepsilon{\varepsilon}{p}(\Chain{}_{x,y}, \mathcal{F}_{x,y} ,\mm_{x,y}^L)$ can be different from $0$, because of Proposition \ref{prop:modulus_chain_modulus_limit}. The difference is due to the fact that $\cModepsilon{\varepsilon}{p}(\cdot, \mathcal{F}_{x,y} ,\mm_{x,y}^L)$ is not an outer measure. 
    Moreover, 
    \begin{equation}
        \begin{aligned}
            \cModepsilon{\varepsilon}{p}(\Chain{}_{x,y}, \mathcal{F}_{x,y} ,\mm_{x,y}^L) &= \cModepsilon{\varepsilon}{p}(\Chain{}_{x,y}, \mathcal{F}_{x,y} ,\bar{\mm}_{x,y}^L)\\
            \cModepsilon{\varepsilon}{p}(\Chain{}_{x,y}, \Lip(\X) ,\mm_{x,y}^L) &= \cModepsilon{\varepsilon}{p}(\Chain{}_{x,y}, \Lip(\bar{\X}) ,\bar{\mm}_{x,y}^L),
        \end{aligned}
    \end{equation}
    where the right hand sides are computed on the metric measure space $(\bar{\X}, \bar{\sfd}, \bar{\mm})$. In particular 
    $$\lim_{\varepsilon \to 0} \cModepsilon{\varepsilon}{p}(\Chain{}_{x,y}, \mathcal{F}_{x,y} ,\mm_{x,y}^L) = \lim_{\varepsilon \to 0} \cModepsilon{\varepsilon}{p}(\Chain{}_{x,y}, \Lip(\X), \mm_{x,y}^L)$$
    holds true in every doubling metric measure space without the completeness assumption.
\end{remark}
%
%
\begin{corollary}
    Let $(\X,\sfd,\mm)$ be a doubling metric measure space. Let $x,y\in \X$. Then the conditions of Proposition \ref{prop:pt_PI_equivalent_conditions_A_p_connectedness} are equivalent to the following. There exists $c>0$, $L\ge 1$ such that
    $$\lim_{\varepsilon \to 0} \cModepsilon{\varepsilon}{p}(\Chain{}_{x,y}, \mathcal{F}_{x,y} ,\mm_{x,y}^L) = \lim_{\varepsilon \to 0} \cModepsilon{\varepsilon}{p}(\Chain{}_{x,y}, \Lip(\X), \mm_{x,y}^L) \ge c\sfd(x,y)^{1-p}.$$
\end{corollary}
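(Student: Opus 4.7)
The plan is to establish the equivalence by showing that the modulus lower bound in the statement --- call it (MOD) --- is equivalent to condition (iii) of Proposition \ref{prop:pt_PI_equivalent_conditions_A_p_connectedness}, and then to invoke that proposition for the rest. The forward direction (MOD) $\Rightarrow$ (iii) will be a direct normalization argument. I would first observe that $\varepsilon \mapsto \cModepsilon{\varepsilon}{p}(\Chain{}_{x,y}, \mathcal{F}_{x,y}, \mm_{x,y}^L)$ is nondecreasing (smaller $\varepsilon$ imposes fewer admissibility constraints), so (MOD) is equivalent to $\cModepsilon{\varepsilon}{p}(\Chain{}_{x,y}, \mathcal{F}_{x,y}, \mm_{x,y}^L) \ge c\sfd(x,y)^{1-p}$ for every $\varepsilon > 0$. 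Then, given Borel $u$ and $g \in \UG{\varepsilon_0}(u)$ with $g(x), g(y) < \infty$ and (without loss of generality) $\int g^p\,\d\mm_{x,y}^L < \infty$, I would set $I_\varepsilon := \inf_{\sfc \in \Chain{\varepsilon}_{x,y}} \int_\sfc g$ for $\varepsilon \le \varepsilon_0$. The chain upper gradient inequality gives $|u(x)-u(y)| \le I_\varepsilon$, while (provided $I_\varepsilon > 0$) the function $g/I_\varepsilon$ belongs to ${\rm Adm}^\varepsilon(\Chain{}_{x,y}) \cap \mathcal{F}_{x,y}$. Admissibility then yields $c\sfd(x,y)^{1-p} \le I_\varepsilon^{-p}\int g^p\,\d\mm_{x,y}^L$, from which (iii) follows.

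For the reverse direction (i) $\Rightarrow$ (MOD), I plan to reduce to the complete case and invoke curve modulus theory. By the completion argument carried out inside the proof of Proposition \ref{prop:pt_PI_equivalent_conditions_A_p_connectedness}, condition (i) is preserved under passage to $(\bar{\X}, \bar{\sfd}, \bar{\mm})$; since $\bar{\X}$ is doubling and complete, the classical Keith-type characterization (see \cite[Theorem 2]{Kei03} and \cite[Proposition A.1]{CaputoCavallucci2024II}) equates (i) on $\bar{\X}$ with the existence of $c>0$, $L \ge 1$ such that $\Mod_p(\Gamma_{x,y}, \bar{\mm}_{x,y}^L) \ge c\sfd(x,y)^{1-p}$. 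I will then apply Proposition \ref{prop:modulus_chain_modulus_limit}, which identifies this curve modulus with $\lim_{\varepsilon \to 0}\cModepsilon{\varepsilon}{p}(\Chain{}_{x,y}, \mathcal{F}_{x,y}, \bar{\mm}_{x,y}^L)$, and Remark \ref{rmk:chain_modulus_PI} to transport this back to the chain modulus on $\X$, closing the loop.

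The main obstacle is conceptual rather than computational: one has to pivot between the discrete chain framework on the possibly non-complete space $\X$ and the continuous curve modulus theory that lives naturally on the complete space $\bar{\X}$. The identification of the two limits in Proposition \ref{prop:modulus_chain_modulus_limit} is the key bridge, and the stability of the chain modulus under completion (Remark \ref{rmk:chain_modulus_PI}) is what makes the non-complete version of (MOD) meaningful at all. A minor but essential point is that the admissibility class $\mathcal{F}_{x,y}$ matches precisely the hypothesis $g(x), g(y) < \infty$ appearing in item (iii) of Proposition \ref{prop:pt_PI_equivalent_conditions_A_p_connectedness}, ensuring that a chain upper gradient appearing in (iii) is automatically a legitimate test function for the chain modulus in (MOD).
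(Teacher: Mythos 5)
Your argument is correct. The direction (i) $\Rightarrow$ (MOD) is exactly the paper's route: pass to the completion, apply Proposition \ref{prop:modulus_chain_modulus_limit} to identify the limit of chain moduli with $\Mod_p(\Gamma_{x,y},\mm_{x,y}^L)$, invoke the curve-modulus Keith characterization \cite[Proposition A.1]{CaputoCavallucci2024II}, and use Remark \ref{rmk:chain_modulus_PI} to return to $\X$ (which also yields the equality of the two limits appearing in the statement). Where you diverge is the converse: the paper runs the same chain of identifications backwards, so that (MOD) $\Rightarrow$ (i) is again mediated by the curve modulus on $\bar{\X}$, whereas you prove (MOD) $\Rightarrow$ (iii) of Proposition \ref{prop:pt_PI_equivalent_conditions_A_p_connectedness} directly by normalizing a chain upper gradient $g$ with $g(x),g(y)<+\infty$ into the admissible test function $g/I_\varepsilon$. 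This is the discrete analogue of the classical normalization trick and is self-contained; it buys you an implication that needs neither completeness, nor the compactness result of Proposition \ref{prop:Sylvester_compactness_chains}, nor the external reference, and it makes transparent why the class $\mathcal{F}_{x,y}$ is the right admissibility class. The only points to spell out are the degenerate cases $I_\varepsilon\in\{0,+\infty\}$: the first makes (iii) trivial, and the second is incompatible with (MOD) when $\int g^p\,\d\mm_{x,y}^L<\infty$, since then $g/M$ would be admissible for every $M>0$ and the modulus would vanish. Your observation that the monotonicity of $\varepsilon\mapsto\cModepsilon{\varepsilon}{p}$ converts the limit in (MOD) into a bound for each fixed $\varepsilon$ is exactly what makes the normalization work.
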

\begin{proof}
    As noticed in the proof of Proposition \ref{prop:pt_PI_equivalent_conditions_A_p_connectedness} and Remark \ref{rmk:chain_modulus_PI}, the conditions hold if and only if they hold on the metric completion $(\bar{X},\bar{\sfd},\bar{\mm})$. Therefore the result follows by Proposition \ref{prop:modulus_chain_modulus_limit} and \cite[Proposition A.1]{CaputoCavallucci2024II}, see also the original \cite[Theorem 2]{Kei03}.
\end{proof}
\subsection{Energy of separating sets via chains}
\label{sec:separating_sets_chains}
In the case $p=1$ we can extend our characterizations in \cite{CaputoCavallucci2024II} to possibly non complete metric spaces.
We need the notion of chain width of a given set $A \subset \X$, which is
\begin{equation}
    \Chain{}\textup{-}\width_{x,y}(A):= \lim_{\varepsilon \to 0}\inf_{\sfc \in \Chain{\varepsilon}_{x,y}} \int_{\sfc} \chi_A.
\end{equation}
%

%

%
We also need to recall the notion of separating set and of Minkowski content. Given $x,y\in \X$ we say that a set $\Omega \subseteq \X$ is separating if it is closed, $x$ belongs to the interior of $\Omega$ and $y$ belongs to $\Omega^c$. The family of separating sets between $x$ and $y$ is denoted by $\SS_{\textup{top}}(x,y)$.

Given a subset $A$ of a metric measure space $(\X,\sfd,\mm)$, we define its Minkowski content by
$$\mm^+(A) := \limi_{r\to 0} \frac{\mm(B_r(A)\setminus A)}{r}.$$
  
The following theorem is the chain version of \cite[Theorem 1.4]{CaputoCavallucci2024II} and it is suited for the case $p=1$.
\begin{theorem}
\label{theo:1-PI_equiv_BMC}
    Let $(\X,\sfd,\mm)$ be a doubling metric measure space. Let $x,y \in \X$. Then the following conditions are quantitatively equivalent:
    \begin{itemize}
        \item[(i)] there exist $C >0$, $L \ge 1$ such that \eqref{eq:ptwise_PI_lip} holds for every $u \in \Lip(\X)$;
        \item[(ii)] there exist $C>0$, $L \ge 1$ such that for every $A \subseteq \X$ it holds 
    \begin{equation}
        \Chain{}\textup{-}\width_{x,y}(A) \le C \mm_{x,y}^L(A).
    \end{equation}
        \item[(iii)] there exist $c >0$, $L \ge 1$ such that for every $\Omega \in \SS_{\textup{top}}(x,y)$ it holds $(\mm_{x,y}^L)^+(\Omega) \ge c$.
    \end{itemize}
\end{theorem}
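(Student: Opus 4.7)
The plan is to establish the cycle of implications (i) $\Rightarrow$ (ii) $\Rightarrow$ (iii) $\Rightarrow$ (i), with explicit tracking of constants in order to obtain the quantitative equivalence. The first implication is an immediate application of Proposition \ref{prop:pt_PI_equivalent_conditions_A_p_connectedness} in the case $p=1$ with $g := \chi_A$, which trivially satisfies $g(x),g(y)\le 1<+\infty$. Item (ii) of that proposition yields
\begin{equation*}
    \lim_{\varepsilon \to 0} \inf_{\substack{\sfc \in \Chain{\varepsilon}_{x,y}\\ \ell(\sfc) \le C\sfd(x,y)}} \int_{\sfc} \chi_A \;\le\; C\, \mm_{x,y}^L(A),
\end{equation*}
and since $\Chain{}\textup{-}\width_{x,y}(A)$ is defined as the same infimum without the length constraint, it is bounded above by the right-hand side, proving (ii).

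For (ii) $\Rightarrow$ (iii) I would test hypothesis (ii) on a carefully chosen cut-off. Given $\Omega \in \SS_{\textup{top}}(x,y)$ and $r>0$ small enough that $\sfd(y,\Omega)>r$, introduce the $(3/r)$-Lipschitz function
\begin{equation*}
    \hat f_r(z) := \min\{1,\max\{0,3\sfd(z,\Omega)/r-1\}\},
\end{equation*}
so that $\hat f_r(x)=0$, $\hat f_r(y)=1$, $\hat f_r\equiv 0$ on $\{\sfd(\cdot,\Omega)\le r/3\}$ and $\hat f_r\equiv 1$ on $\{\sfd(\cdot,\Omega)\ge 2r/3\}$. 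By Lemma \ref{lemma:slopeeps_epsug} the slope $\sl{\varepsilon}\hat f_r$ is an $\varepsilon$-upper gradient of $\hat f_r$, and for every $\varepsilon<r/6$ it vanishes outside $\{r/3-\varepsilon<\sfd(\cdot,\Omega)<2r/3+\varepsilon\}\subseteq A_r:=B_r(\Omega)\setminus\Omega$ and is pointwise bounded by $3/r$. For any $\sfc \in \Chain{\varepsilon}_{x,y}$ the upper gradient inequality gives $1=|\hat f_r(y)-\hat f_r(x)|\le \int_\sfc \sl{\varepsilon}\hat f_r \le (3/r)\int_\sfc \chi_{A_r}$; since $\Chain{}\textup{-}\width_{x,y}(A_r)$ is the increasing $\varepsilon\to 0$ limit of $\inf_{\sfc\in\Chain{\varepsilon}_{x,y}}\int_\sfc \chi_{A_r}$, we conclude $\Chain{}\textup{-}\width_{x,y}(A_r)\ge r/3$. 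Hypothesis (ii) then gives $\mm_{x,y}^L(A_r)\ge r/(3C)$ for all sufficiently small $r$, and taking $\liminf_{r\to 0}$ yields $(\mm_{x,y}^L)^+(\Omega)\ge 1/(3C)$.

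For (iii) $\Rightarrow$ (i) I would use a coarea-type argument. Given $u \in \Lip(\X)$ with, say, $u(x)>u(y)$, for each $t \in (u(y),u(x))$ the superlevel set $\Omega_t:=\{u\ge t\}$ is closed by continuity of $u$, contains $x$ in its interior since $x\in\{u>t\}\subseteq\Omega_t$ and $\{u>t\}$ is open, and does not contain $y$; hence $\Omega_t\in \SS_{\textup{top}}(x,y)$ and $(\mm_{x,y}^L)^+(\Omega_t)\ge c$ by hypothesis. Combining this with the Fleming-Rishel-type coarea inequality
\begin{equation*}
    \int_{\R}(\mm_{x,y}^L)^+(\{u\ge t\})\,\d t \;\le\; \int \lip\,u\,\d\mm_{x,y}^L,
\end{equation*}
valid for Lipschitz functions against the Radon (in fact finite, by \eqref{eq:bound_on_m_x,y^L}) measure $\mm_{x,y}^L$ and already used in \cite{CaputoCavallucci2024II}, one obtains $c(u(x)-u(y))\le \int \lip\,u\,\d\mm_{x,y}^L$, which is (i) with constant $C=1/c$.

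The main obstacle is the step (ii) $\Rightarrow$ (iii): a naive cut-off like $f_r(z)=\min\{1,\sfd(z,\Omega)/r\}$ leads to a slope whose support also touches $\Omega$ along a thin inner $\varepsilon$-layer $\Omega\setminus \Omega_{-\varepsilon}$, whose mass need not be negligible compared to $r$ as $r\to 0$. The buffer choice in $\hat f_r$, which introduces a safety gap of width $r/3$ both against $\Omega$ and against $B_r(\Omega)^c$, is designed precisely to confine the support of the slope strictly inside $A_r$ for every $\varepsilon<r/6$ and to avoid any spurious boundary contribution. A secondary delicate point is the validity of the coarea inequality invoked in (iii) $\Rightarrow$ (i), which relies on the doubling assumption on $\mm$ and on the boundedness of the truncated Riesz density on $B_{x,y}^L$.
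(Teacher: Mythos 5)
Your proposal is correct and follows the same overall cycle (i) $\Rightarrow$ (ii) $\Rightarrow$ (iii) $\Rightarrow$ (i) as the paper; the implications (i) $\Rightarrow$ (ii) (via item (ii) of Proposition \ref{prop:pt_PI_equivalent_conditions_A_p_connectedness} with $g=\chi_A$) and (iii) $\Rightarrow$ (i) (via the coarea inequality for the Minkowski content applied to $\mm_{x,y}^L$) coincide with the paper's. The one genuinely different step is (ii) $\Rightarrow$ (iii). The paper argues directly on chains: for $\sfc\in\Chain{\varepsilon}_{x,y}$ it extracts a maximal subchain lying in $B_r(\Omega)\setminus\Omega$ and observes that its length is at least $r-2\varepsilon$, giving $\Chain{}\textup{-}\width_{x,y}(B_r(\Omega)\setminus\Omega)\ge r$ and hence $(\mm_{x,y}^L)^+(\Omega)\ge 1/C$. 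You instead test the width bound through a buffered Lipschitz cut-off $\hat f_r$ whose $\varepsilon$-slope is supported in $B_r(\Omega)\setminus\Omega$ and bounded by $3/r$, and invoke the $\varepsilon$-upper gradient inequality; this is sound (your support and Lipschitz-constant computations check out for $\varepsilon<r/6$) but yields the slightly worse constant $1/(3C)$, which is harmless for a quantitative equivalence. The paper's combinatorial argument is more elementary and constant-sharp; your test-function argument is the more standard route and transfers verbatim to other notions of width defined by duality against upper gradients.
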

\begin{proof}
    If (i) holds then item (ii) of Proposition \ref{prop:pt_PI_equivalent_conditions_A_p_connectedness} holds. Applying it to the Borel function $g=\chi_A$, where $A\subseteq \X$ is Borel, we get that
    $$\Chain{}\textup{-}\width_{x,y}(A) \le \lim_{\varepsilon \to 0} \inf_{\substack{\sfc \in \Chain{\varepsilon}_{x,y} \\ \ell(\sfc) \le C\sfd(x,y)}} \int_{\sfc} \chi_A \le C\int \chi_A\,\d\mm_{x,y}^L = C\mm_{x,y}^L(A).$$
    This shows (ii).
    
    We now assume (ii) and we consider $\Omega \in \SS_{\rm top}(x,y)$. Let $0< r < \min\{\sfd(x,\partial\Omega), \sfd(y,\partial\Omega)\}$. For $\varepsilon < r$, let $\sfc = \{q_i\}_{i=0}^N \in \Chain{\varepsilon}_{x,y}$ and let  $\sfc' = (q_m,\ldots,q_M)$ be a maximal subchain such that $q_i \in B_r(\Omega)\setminus \Omega$ for every $i=m,\ldots,M$. Therefore we have
    $$\int_{\sfc}\chi_{B_r(\Omega)\setminus \Omega} \geq \int_{\sfc'}\chi_{B_r(\Omega)\setminus \Omega} \geq r-2\varepsilon,$$
    by maximality of $\sfc'$. By taking the limit for $\varepsilon$ going to zero we find
    \begin{equation}
        \label{eq:estimate_cwidth_below}
        \Chain{}\textup{-}\width_{x,y}(B_r(\Omega)\setminus \Omega) \geq \limsup_{\varepsilon \to 0} (r - 2\varepsilon) = r.
    \end{equation}
    Hence we compute
    $$(\mm_{x,y}^L)^+(\Omega) = \limi_{r \to 0}\frac{\mm_{x,y}^L(B_r(\Omega)\setminus \Omega)}{r} \stackrel{\eqref{eq:estimate_cwidth_below}}{\geq} \limi_{r \to 0}\frac{\mm_{x,y}^L(B_r(\Omega)\setminus \Omega)}{\Chain{}\textup{-}\width_{x,y}(B_r(\Omega)\setminus \Omega)} \geq \frac{1}{C}.$$
This proves (iii). 

It remains only to prove (iii) implies (i). This is the proof of the last implication in \cite[Thm.\ 6.1]{CaputoCavallucci2024} that we report for completeness. Let $u\colon \X \to \R$, $u\geq 0$ be a bounded Lipschitz function and let $x,y\in \X$. We can assume that $u(x) < u(y)$ otherwise there is nothing to prove. The sets $\Omega_t := \lbrace u \geq t \rbrace$ belong to $\SS_\text{top}(x,y)$ for all $t\in (u(x),u(y))$. So we can apply the coarea inequality for the Minkowski content (see \cite[Lemma 3.2]{AmbDiMarGig17}) with respect to the measure $\mm_{x,y}^L$ to get
    $$c\, \vert u(x) - u(y) \vert \leq \int_{u(x)}^{u(y)} (\mm_{x,y}^L)^{+}(\lbrace u \geq t \rbrace) \,\d t \leq \int_\X \lip\,u \,\d\mm_{x,y}^L.$$
    Therefore item (i) follows with $C = 1/c$ for Lipschitz, nonnegative, bounded functions. A standard approximation argument gives the same estimate for all Lipschitz functions.
\end{proof}
\begin{remark}
    Condition (iii) of Theorem \ref{theo:1-PI_equiv_BMC} is denoted by (BMC)$_{x,y}$ in \cite{CaputoCavallucci2024II}, meaning `big Minkowski content'. If it holds for every couple of points of $\X$ with same constants we say that $(\X,\sfd,\mm)$ satisfies property (BMC). This property has been studied in \cite{CaputoCavallucci2024}, where it is shown to be equivalent to the $1$-Poincaré inequality if $(\X,\sfd, \mm)$ is complete and doubling. Other properties regarding the boundary of separating sets have been studied there, and they are all equivalent to the $1$-Poincaré inequality in case of complete, doubling metric measure spaces. In the non complete case, this is not true. 
    This happens since a separating set can have empty boundary like in the following example. Let $\X=\mathbb{R}^2\setminus \{x=0\}$. We consider $A =(-1,0)$ and $B=(1,0)$ and $\Omega:=(-\infty, 0) \times \mathbb{R} \in {\rm SS}_{\rm top}(A,B)$. Here $\partial \Omega=\emptyset$. Moreover, $\X$ satisfies a $1$-Poincaré inequality for all couples $(u,\lip\,u)$ with $u\in \Lip(\X)$, because of Corollary \ref{cor:PI_lip=chain}.    
    However, with the notation of \cite[Theorem 6.1]{CaputoCavallucci2024}, 
    we have that the set $\Omega$ above does not satisfy (BH),(BH$_\textup{R}$), (BH$^e$), (BH$^e_\textup{R}$), (BP), (BP$_\text{R}$), (BC), (BAM), (BAM)$^e$ and (BAM)$^\pitchfork$. The last condition (BAM)$^\pitchfork$ can be expressed via chains in a way that it is equivalent to (BMC).
\end{remark}

\bibliographystyle{alpha}
\bibliography{biblio.bib}

\end{document}